\theoremstyle{plain}
\newtheorem{theorem}{Theorem}[section]
\newtheorem{lemma}[theorem]{Lemma}
\newtheorem{proposition}[theorem]{Proposition}
\newtheorem{corollary}[theorem]{Corollary}
\theoremstyle{remark}
\newtheorem{remark}[theorem]{Remark}
\theoremstyle{definition}
\newtheorem{example}[theorem]{Example}
\newtheorem{assumption}[theorem]{Assumption}
\newcounter{counter_a}
\newenvironment{myenum}{\begin{list}{\textrm{\textup{(\roman{counter_a})}}}%
{\usecounter{counter_a}
\setlength{\itemsep}{0.5ex}\setlength{\topsep}{0.7ex}
\setlength{\leftmargin}{5ex}\setlength{\labelwidth}{5ex}}}{\end{list}}
\newcounter{counter_b}
\newenvironment{myenuma}{\begin{list}{\textrm{\textup{(\alph{counter_b})}}}%
{\usecounter{counter_b}
\setlength{\itemsep}{0.5ex}\setlength{\topsep}{0.7ex}
\setlength{\leftmargin}{5ex}\setlength{\labelwidth}{5ex}}}{\end{list}}
\DeclareMathOperator\real{Re}
\DeclareMathOperator\imag{Im}
\renewcommand\Re{\real}
\renewcommand\Im{\imag}
\DeclareMathOperator\diag{diag}
\DeclareMathOperator\spn{span}
\newcommand\rd{\mathrm{d}}
\newcommand\normcdot{\lVert\,\cdot\,\rVert}
\newcommand\normcdotsub[1]{\lVert\,\cdot\,\rVert_{#1}}
\newcommand\Gmax{G_{\textup{\textsf{max}}}}
\newcommand\Gmaxw{\Gmax^{(w)}}
\newcommand\Gmaxone{G_{1,\textup{\textsf{max}}}}
\numberwithin{equation}{section}
\begin{document}

\pagenumbering{arabic}

\title{Discrete Fragmentation Systems in \\[0.3ex] Weighted $\ell^1$ Spaces}

\author{Lyndsay Kerr, Wilson Lamb and Matthias Langer}

\cleanlookdateon
\date{}

\maketitle

\begin{abstract}
\noindent
We investigate an infinite, linear system of ordinary differential equations
that models the evolution of fragmenting clusters.
We assume that each cluster is composed of identical units (monomers) and
we allow mass to be lost, gained or conserved during each fragmentation event.
By formulating the initial-value problem for the system as an abstract Cauchy problem (ACP),
posed in an appropriate weighted $\ell^1$ space, and then applying perturbation results
from the theory of operator semigroups, we prove the existence and uniqueness
of physically relevant, classical solutions for a wide class of
initial cluster distributions.
Additionally, we establish that it is always possible to identify a
weighted $\ell^1$ space on which the fragmentation semigroup is analytic,
which immediately implies that the corresponding ACP is well posed
for any initial distribution belonging to this particular space.
We also investigate the asymptotic behaviour of solutions, and show that,
under appropriate restrictions on the fragmentation coefficients, solutions
display the expected long-term behaviour of converging
to a purely monomeric steady state. Moreover,
when the fragmentation semigroup is analytic, solutions are shown
to decay to this steady state at an explicitly defined exponential rate.
\\[1ex]
\textit{Keywords:} discrete fragmentation, positive semigroup,
analytic semigroup, long-time behaviour, Sobolev towers
\\[1ex]
\textit{Mathematics Subject Classification (2010):}
47D06; 34G10, 80A30, 34D05
\end{abstract}

\section{Introduction}
\label{Introduction}

There are many diverse situations arising in nature and industrial processes
where clusters of particles can merge together (coagulate) to produce larger clusters,
and can break apart (fragment) to produce smaller clusters.
Particular examples can be found in polymer science,
\cite{aizenman1979convergence, ziff1980kinetics, ziffmcgrady1985kinetics},
in the formation of aerosols, \cite{drake1972aerosol},
and in the powder production industry, \cite{verdurmen2004simulation, wells2018thesis}.
It is often appropriate when modelling such processes to regard cluster size
as a discrete variable, with a cluster of size $n$, an $n$-mer,
composed of $n$ identical units (monomers).
By scaling the mass, we can assume that each monomer has unit mass and
so an $n$-mer has mass $n$.  The aim is to use the mathematical model
to obtain information on how clusters of different sizes evolve.
In this paper we restrict our attention to the case when no coagulation occurs,
and consequently the  evolution of clusters can be described by a linear,
infinite system of ordinary differential equations.
With the number density of clusters of size $n$ (i.e.\ mass $n$)
at time $t$ denoted by $u_n(t)$, this fragmentation system is given by
\begin{equation}\label{full frag system}
\begin{split}
  u_n'(t)&=-a_nu_n(t)+\sum\limits_{j=n+1}^{\infty} a_jb_{n,j}u_j(t), \qquad t>0; \\
  u_n(0)&=\mathring{u}_n, \qquad n=1,2,\ldots,
\end{split}
\end{equation}
where $a_n$ is the rate at which clusters of size $n$ are lost, $b_{n,j}$
is the rate at which clusters of size $n$ are produced when a larger cluster
of size $j$ fragments and $\mathring{u}_n$ is the initial density of clusters
of size $n$ at time $t=0$.
Equation \eqref{full frag system} was first introduced in \cite{ziffmcgrady1985kinetics}
to deal with the case of binary fragmentation, where it is assumed that
each fragmentation event results in the creation of exactly two daughter clusters.
As in \cite{banasiak2011irregular, banasiakjoelshindin2019_onlinefirst, mcbride2010strongly, smith2012discrete},
we consider the more general case, where each fragmentation event can result
in the creation of two or more clusters.  Since \eqref{full frag system}
is an infinite system, it is convenient to express solutions as
time-dependent sequences of the form  $u(t) \coloneqq (u_n(t))_{n=1}^{\infty}$.

Throughout this paper we need various assumptions on the fragmentation
coefficients $a_n$ and $b_{n,j}$.  We list these assumptions here and will refer
to them in the sequel when required.
\begin{assumption}\label{A1.1}
\rule{0ex}{1ex}
\begin{myenum}
\item 
For all $n \in \mathbb{N}$,
\begin{equation}\label{fragmentation rate assumption}
  a_n \ge 0.
\end{equation}
\item 
For all $n,j \in \mathbb{N}$,
\begin{equation}\label{a_b_nonnegative}
  b_{n,j} \ge 0 \qquad\text{and}\qquad b_{n,j} = 0 \quad \text{when} \ n \ge j.
\end{equation}
\end{myenum}
\end{assumption}

\medskip

\noindent
The total mass of daughter clusters resulting from the fragmentation of a $j$-mer
is given by $\sum_{n=1}^{j-1} nb_{n,j}$.
In most papers that have dealt with discrete fragmentation systems it is
assumed that
\begin{equation}\label{local_mass_non_increasing}
  \sum\limits_{n=1}^{j-1} nb_{n,j} \le j \qquad\text{for all} \ j=2,3,\ldots,
\end{equation}
i.e.\ there is no increase in mass at fragmentation events.
If there is strict inequality in \eqref{local_mass_non_increasing},
then mass is lost by some other mechanism.
However, for most of our results we do not assume that \eqref{local_mass_non_increasing}
holds; this means that mass could even be gained at fragmentation events.
We can specify the local mass loss or mass gain with
real parameters $\lambda_j$, $j=2,3,\ldots$, such that
\begin{equation}\label{local mass conservation lambda}
  \sum\limits_{n=1}^{j-1} nb_{n,j} = (1-\lambda_j)j, \qquad j=2,3,\ldots.
\end{equation}
In terms of the densities $u_n(t)$, the total mass of all clusters
in the system at time $t$ is given by the first moment, $M_1(u(t))$, of $u(t)$,
where
\begin{equation}\label{total mass}
  M_1\bigl(u(t)\bigr) \coloneqq \sum\limits_{n=1}^{\infty} nu_n(t).
\end{equation}
A formal calculation establishes that if $u$ is a solution of \eqref{full frag system},
then
\begin{equation}\label{massode}
  \frac{\rd}{\rd t}M_1\bigl(u(t)\bigr)
  = - a_1u_1(t) - \sum_{j=2}^\infty j \lambda_j a_ju_j(t).
\end{equation}
The expression in \eqref{massode} gives the rate at which mass may be lost from the system
or gained, and also shows that, at least formally, the total mass is conserved
when $a_1=0$ and $\lambda_j=0$ for all $j=2,3,\ldots$, i.e.\ when
\begin{equation}\label{mass_conserved}
  a_1 = 0 \qquad\text{and}\qquad
  \sum_{n=1}^{j-1} n b_{n,j} = j \quad \text{for all} \ j=2,3,\ldots.
\end{equation}
Note that monomers cannot fragment to produce smaller clusters,
and hence the case when $a_1 > 0$ is interpreted as a situation in which
monomers are removed from the system.

In this paper, the approach we use to investigate \eqref{full frag system}
relies on the theory of semigroups of bounded linear operators,
and entails formulating \eqref{full frag system} as an abstract Cauchy problem (ACP)
in an appropriate Banach space.
The existence and uniqueness of solutions to the ACP are established via the
application of perturbation results for operator semigroups.
Of particular relevance is the Kato--Voigt perturbation theorem for
substochastic semigroups \cite{banasiak2001extension,voigt1987onsubstochastic}
that was first applied to \eqref{full frag system} in \cite{mcbride2010strongly},
and subsequently in similar semigroup-based investigations
into \eqref{full frag system}, such as \cite{banasiak2012global, smith2012discrete}.
We use a refined version of this theorem proved by Thieme and Voigt
in \cite{thieme2006stochastic}.

In previous studies, including \cite{mcbride2010strongly, smith2012discrete},
the ACP associated with the fragmentation system has been formulated in the space
\begin{equation}\label{X1space}
  X_{[1]} \coloneqq \biggl\{f=(f_n)_{n=1}^{\infty}: f_n \in \mathbb{R} \ \text{for all} \
  n \in \mathbb{N} \ \text{and} \
  \sum\limits_{n=1}^{\infty} n|f_n|<\infty\biggr\}.
\end{equation}
Equipped with the norm
\begin{equation}\label{X1norm}
  \Vert f \Vert_{[1]} = \sum\limits_{n=1}^{\infty} n|f_n|, \qquad  f \in X_{[1]},
\end{equation}
$X_{[1]}$ is a Banach space, and
\begin{equation}\label{X_1functional}
  \Vert f \Vert_{[1]} = M_1(f)
\end{equation}
if $f \in X_{[1]}$ is such that $f_n \ge 0$, $n \in \mathbb{N}$.
This means that whenever $u:[0,\infty) \to X_{[1]}$ is a non-negative solution
of the fragmentation system, the norm, $\Vert u(t)\Vert_{[1]}$,
gives the total mass at time $t$.
Other Banach spaces, with norms related to higher order moments,
have also played a prominent role \cite{banasiak2012global, banasiaklamb2012analytic},
with $X_{[1]}$ being replaced by $X_{[p]}$, $p > 1$, where
\begin{equation}\label{Xpspace}
  X_{[p]} \coloneqq \biggl\{f=(f_n)_{n=1}^{\infty}: f_n \in \mathbb{R} \ \text{for all} \
  n \in \mathbb{N} \ \text{and} \
  \Vert f \Vert_{[p]} \coloneqq \sum\limits_{n=1}^{\infty} n^p|f_n|<\infty\biggr\}.
\end{equation}

Rather than restricting our investigations to spaces of the type $X_{[p]}$,
we choose to work within the framework of more general weighted $\ell^1$ spaces.
As we shall demonstrate, this additional flexibility will enable us to
establish desirable semigroup properties and results that may not always
be possible in an $X_{[p]}$ setting.  Therefore, we let  $w=(w_n)_{n=1}^{\infty}$
be such that $w_n>0$ for all $n \in \mathbb{N}$, and define
\begin{equation}\label{weighted l^1 space}
  \ell_w^1 = \biggl\{f=(f_n)_{n=1}^{\infty}:
  f_n \in \mathbb{R} \ \text{for all} \ n \in \mathbb{N} \ \text{and} \
  \sum\limits_{n=1}^{\infty} w_n|f_n|<\infty\biggr\}.
\end{equation}
Equipped with the norm
\begin{equation}\label{weighted l^1 space norm}
  \Vert f \Vert_w=\sum\limits_{n=1}^{\infty} w_n|f_n|, \qquad  f \in \ell_w^1,
\end{equation}
$\ell_w^1$ is a Banach space, which we refer to as the weighted $\ell^1$ space
with weight $w$.

Motivated by the terms in \eqref{full frag system}, we introduce the formal expressions
\begin{align*}
  \mathcal{A}: (f_n)_{n=1}^{\infty} \mapsto (-a_nf_n)_{n=1}^{\infty}
  \qquad \text{and} \qquad
  \mathcal{B}: (f_n)_{n=1}^{\infty} \mapsto
  \Biggl(\sum\limits_{j=n+1}^{\infty} a_j b_{n,j}f_j\Biggr)_{n=1}^{\infty}.
\end{align*}
Operator realisations, $A^{(w)}$ and $B^{(w)}$, of $\mathcal{A}$ and $\mathcal{B}$
respectively, are defined in $\ell_w^1$ by
\begin{equation}\label{A^w equation}
  A^{(w)}f = \mathcal{A}f, \qquad
  \mathcal{D}(A^{(w)}) = \bigl\{f \in \ell_w^1: \mathcal{A}f \in \ell_w^1\bigr\}
\end{equation}
and
\begin{equation}\label{B^w equation}
  B^{(w)}f = \mathcal{B}f, \qquad
  \mathcal{D}(B^{(w)}) = \bigl\{f \in \ell_w^1: \mathcal{B}f \in \ell_w^1\bigr\}.
\end{equation}
Here, and in the sequel, $\mathcal{D}(T)$ denotes the domain of
the designated operator $T$.  Similarly, we shall represent
the resolvent, $(\lambda I-T)^{-1}$, of $T$ by $R(\lambda,T)$.

An ACP version of \eqref{full frag system}, posed in the space $\ell_w^1$,
can be formulated as
\begin{equation}\label{weighted frag ACP}
  u'(t) = A^{(w)}u(t)+B^{(w)}u(t), \quad t>0; \qquad u(0)=\mathring{u}.
\end{equation}
Note that this reformulation of \eqref{full frag system} imposes additional constraints
on both the initial data and the sought solutions  since we now
require $\mathring{u} \in \ell_w^1$ and also that
the solution $u(t) \in \mathcal{D}(A^{(w)}) \cap \mathcal{D}(B^{(w)})$ for all $t > 0$.
Moreover, as the derivative on the left-hand side of \eqref{weighted frag ACP}
is defined in terms of $\normcdotsub{w}$, it is customary to look for
a solution $u \in C^1((0,\infty), \ell_w^1)  \cap C([0,\infty), \ell_w^1)$.
Such a solution is referred to as a classical solution of \eqref{weighted frag ACP},
and has the property that $\Vert u(t) - \mathring{u}\Vert_w \to 0$ as $t \to 0^+$.

It turns out that often, instead of using the operator $A^{(w)}+B^{(w)}$ on the
right-hand side of \eqref{weighted frag ACP}, one has to use its closure,
which leads to the ACP
\begin{equation}\label{ACP_with_Gw}
  u'(t) = \overline{(A^{(w)}+B^{(w)})}u(t), \quad t>0; \qquad u(0)=\mathring{u}.
\end{equation}
Yet another option for an operator on the right-hand side is the
maximal operator, $\Gmaxw$, which is defined by
\begin{equation}\label{definition_Gmax}
  \Gmaxw f = \mathcal{A}f + \mathcal{B}f, \qquad
  \mathcal{D}(\Gmaxw) = \bigl\{f \in \ell_w^1:
  \mathcal{A}f + \mathcal{B}f \in \ell_w^1\bigr\}.
\end{equation}
However, the domain of this operator is too large in general to ensure
uniqueness of solutions; see Example~\ref{example: random scission} below,
and also \cite{banasiak2002unique} where a continuous fragmentation
equation is studied.

There are a number of benefits to be gained by working in more general
weighted $\ell^1$ spaces, least of which is the derivation of existence
and uniqueness results for \eqref{full frag system} in $\ell_w^1$
that reduce to those established in earlier $X_{[p]}$-based investigations
by choosing $w_n=n^p$.
For example, in Theorem~\ref{G=closure for frag} we prove
that $G^{(w)}=\overline{A^{(w)}+B^{(w)}}$ is the generator of
a substochastic $C_0$-semigroup.
While this result has already been shown for the specific case $w_n=n^p$
for $p \ge 1$, see \cite{banasiak2012global,mcbride2010strongly},
Theorem~\ref{G=closure for frag} is formulated for more general weights,
and is proved by means of an alternative and novel argument that is based
on theory presented in \cite{thieme2006stochastic}.
Our approach also leads to an additional invariance result,
which can be used to establish the existence of solutions to the
fragmentation system \eqref{weighted frag ACP} for a certain specified class
of initial conditions.

A further major advantage of working in the more general setting of  $\ell_w^1$
is that it yields results on the analyticity of the related fragmentation semigroups,
which do not necessarily hold in the restricted case of $w_n = n^p,\, p \geq 1$.
In particular, in Theorem~\ref{can always find analytic semigroup} we prove that,
for \emph{any} fragmentation coefficients,  we can \emph{always} find
a weight $w$ such that $A^{(w)}+B^{(w)}$ is the generator of an analytic,
substochastic $C_0$-semigroup on $\ell_w^1$.
In connection with this, it should be noted that there are no known general results
that guarantee the analyticity of the fragmentation semigroup on the space $X_{[1]}$.
Indeed, this  provided the  motivation for previous investigations
into fragmentation ACPs posed in higher moment spaces, which led to
a sufficient condition being found in \cite{banasiak2012global} for $A^{(w)}+B^{(w)}$
to generate an analytic semigroup on $X_{[p]}$ for some $p > 1$.
However, simple examples are also given in \cite{banasiak2012global}
of fragmentation coefficients where the semigroup is not analytic in $X_{[p]}$
for any $p\ge1$; see Example~\ref{finding weights for binary fragmentation}.

The importance of establishing the analyticity of the semigroup associated
with the fragmentation system is that analytic semigroups have extremely useful properties.
For example, if $A^{(w)}+B^{(w)}$ generates an analytic semigroup on $\ell^1_w$,
then it follows immediately that the ACP \eqref{weighted frag ACP} has
a unique classical solution for any $\mathring{u} \in \ell_w^1$.
In addition, when coagulation is introduced into the system, the analyticity
of the semigroup generated by $A^{(w)}+B^{(w)}$ can be used to weaken the assumptions
that are required on the cluster coagulation rates to obtain the existence
and uniqueness of solutions to the corresponding coagulation--fragmentation system
of equations.
Such coagulation--fragmentation systems will be considered in a subsequent publication.

Once the well-posedness of the fragmentation ACP has been satisfactorily dealt with,
the next question to be addressed is that of the long-term behaviour of solutions.
Results on the asymptotic behaviour of solutions to \eqref{weighted frag ACP}
are given in \cite{banasiak2011irregular, banasiaklamb2012discrete, CadC94}
for the specific case where the weight is $w_n=n^p$ for $p \ge 1$, $n \in \mathbb{N}$.
In particular, for mass-conserving fragmentation processes,
where \eqref{mass_conserved} holds, it is shown that the solution
of \eqref{weighted frag ACP} converges to a state where there are only
monomers present if and only if $a_n>0$ for all $n \geq 2$.
In Section~\ref{Asymptotic Behaviour of Solutions} we continue to work with
more general weights and, in the mass-loss case, show that the solution
of \eqref{weighted frag ACP} decays to the zero state over time
if and only if $a_n>0$ for all $n \in \mathbb{N}$.
This mass-loss result can then be used to deduce that the solution,
in the mass conserving case, converges to the monomer state
if and only if $a_n>0$ for all $n \geq 2$, this result now holding
in the general weighted space $\ell_w^1$.

Regarding the rate at which solutions approach the steady state,
the case where mass is conserved and $w_n=n^p$ for $p>1$ is examined
in  \cite[Section~4]{banasiaklamb2012discrete}, and it is shown
that solutions decay to the monomer state at an exponential rate, which,
however, is not quantified.
In Section~\ref{Asymptotic Behaviour of Solutions} we  obtain results
regarding the exponential rate of decay of solutions, both for the mass-conserving
and mass-loss cases, by working in a space $\ell^1_w$ in which $A^{(w)}+B^{(w)}$
generates an analytic semigroup.  The approach we use enables us to quantify
the exponential decay  rate.

In \cite{smith2012discrete}, the theory of Sobolev towers is used to investigate
a specific example of \eqref{full frag system} that has been proposed as a model
of random bond annihilation.  Of particular note is the fact that the resulting
analysis provides a rigorous explanation of an apparent non-uniqueness
of solutions that emanate from  a zero initial condition.
We shall establish that an approach involving Sobolev towers can also be used
to obtain results on \eqref{full frag system} for general fragmentation coefficients.
By writing \eqref{full frag system} as an ACP in $\ell_w^1$, where $w$
is such that $A^{(w)}+B^{(w)}$ generates an analytic, substochastic $C_0$-semigroup
on $\ell_w^1$, we are able to construct a Sobolev tower and then use this
to prove the existence of unique, non-negative solutions of \eqref{weighted frag ACP}
for a wider class of non-negative initial conditions than those in $\ell_w^1$;
see Theorem~\ref{semigroup from tower solves weighted ACP}.

The paper is structured as follows.  In Section~\ref{Preliminaries} we provide
some prerequisite results and definitions.
Following this, we begin our examination of \eqref{full frag system}
in Section~\ref{The Generator of the Fragmentation Semigroup}, obtaining,
in particular, the aforementioned Theorem~\ref{G=closure for frag},
which is then used to  draw  conclusions on the existence and uniqueness
of solutions to \eqref{weighted frag ACP} and \eqref{ACP_with_Gw},
both in the space $X_{[1]}$ and in more general $\ell_w^1$ spaces.
We consider the pointwise system \eqref{full frag system}
in Section~\ref{Pointwise Problem} and show that for any $\mathring{u} \in \ell_w^1$,
a solution of \eqref{full frag system} can be expressed in terms of
the semigroup generated by $G^{(w)}=\overline{A^{(w)}+B^{(w)}}$.
We then use this result to show that $G^{(w)}$ is a restriction of the
maximal operator $\Gmaxw$.
This is important in investigations into the full coagulation--fragmentation system
as it allows the fragmentation terms to be completely described by the operator $G^{(w)}$.
Results on the analyticity of the fragmentation semigroup are presented
in Section~\ref{Analyticity of the Fragmentation Semigroup},
and then applied both in Section~\ref{Asymptotic Behaviour of Solutions},
where the asymptotic behaviour of solutions is investigated,
and in Section~\ref{Sobolev towers}, where the theory of Sobolev towers
is applied to establish the well-posedness of \eqref{weighted frag ACP}
for more general initial conditions.

\section{Preliminaries}
\label{Preliminaries}

We begin by recalling some terminology.  The following notions are well known and
can be found in various sources, including \cite{banasiak2006perturbations, batkai2017positive}.
Let $X$ be a real vector lattice with norm $\normcdot$.
The positive cone, $X_+$, of $X$ is the set of non-negative elements in $X$ and,
similarly, for a subspace $D$ of $X$, we denote the set of
non-negative elements in $D$ by $D_+$.
If $X$ is a vector lattice, then for each $f \in X$
the vectors $f_{\pm} \coloneqq \sup\{\pm f,0\}$ are well defined and
satisfy $f_+,f_- \in X_+$ and $f=f_+-f_-$.
A vector lattice, equipped with a lattice norm $\normcdot$,
is said to be a \emph{Banach lattice} if $X$ is complete under $\normcdot$.
Moreover, if the lattice norm satisfies
\[
  \Vert f+g \Vert=\Vert f \Vert + \Vert g \Vert
\]
for all $f, g \in X_+$, then $X$ is an \emph{AL-space}.
It can be shown that, when $X$ is an AL-space, there exists a unique,
bounded linear functional, $\phi$, that extends $\normcdot$ from $X_+$ to $X$;
see \cite[Theorems~2.64 and 2.65]{banasiak2006perturbations}.

We now turn our attention to $C_0$-semigroups which are crucial to our investigation
into the pure fragmentation system.  The notions and results given here can be found
in \cite{engel1999one}.  First we note that if $(S(t))_{t \ge 0}$ is a $C_0$-semigroup
on a Banach space $X$, then there exist $M \ge 1$ and  $\omega \in \mathbb{R}$
such that $\Vert S(t) \Vert \le Me^{\omega t}$ for all $t \ge 0$,
and the growth bound, $\omega_0$, of $(S(t))_{t \ge 0}$ is defined by
\[
  \omega_0 \coloneqq \inf\bigl\{\omega \in \mathbb{R}: \text{there exists} \ M_{\omega} \ge 1 \
  \text{such that} \ \Vert S(t) \Vert \leq M_{\omega}e^{\omega t} \
  \text{for all} \ t \ge 0\bigr\}.
\]
Analytic semigroups, see \cite[Definition~\Romannum{2}.4.5]{engel1999one},
are of particular importance in Section~\ref{Analyticity of the Fragmentation Semigroup}.
Semigroups of this type have a number of useful properties that make them
desirable to work with.
For example, if $G$ is the generator of an analytic semigroup, $(S(t))_{t \ge 0}$,
on a Banach space $X$, then $S(t)f \in  \mathcal{D}(G^n)$
for all $t>0$, $n \in \mathbb{N}$ and $f \in X$,
and $S(\cdot)$ is infinitely differentiable.

When dealing with many physical problems, such as the fragmentation system,
meaningful solutions must be non-negative, and this requirement has
to be taken into account in any semigroup-based investigation.
In connection with this, we say that a $C_0$-semigroup $(S(t))_{t \ge 0}$
on an ordered Banach space $X$, such as a Banach lattice,
is \emph{positive} if $S(t)f \ge 0$ for all $f \in X_+$;
it is called \emph{substochastic} (resp.\ \emph{stochastic})
if, additionally, $\Vert S(t)f \Vert \le \Vert f \Vert$
(resp.\ $\Vert S(t)f \Vert=\Vert f \Vert$) for all $f \in X_+$.
It follows that if $G$ generates a substochastic semigroup $(S(t))_{t \ge 0}$,
then the associated ACP
\[
  u'(t)= Gu(t), \;\; t>0; \qquad u(0)=\mathring{u},
\]
has a unique, non-negative classical solution, given by $u(t) = S(t)\mathring{u}$,
for any $\mathring{u} \in D(G)_+$.

A result on substochastic semigroups and their generators that we shall exploit
is due to Thieme and Voigt, \cite[Theorem~2.7]{thieme2006stochastic}.
This result gives sufficient conditions under which the closure of the sum
of two operators, such as $A^{(w)}+B^{(w)}$ in \eqref{weighted frag ACP},
generates a substochastic semigroup.
The existence of an invariant subspace under the resulting semigroup
is also established.
As we demonstrate below in Proposition \ref{corollary of G=closure},
it is possible to adapt the Thieme--Voigt result to produce a modified version
that is ideally suited for applying to the fragmentation system.
We first provide some prerequisite results that are used in the proof of this proposition.

\begin{lemma}\label{uniqueness of extension generator}
Let $A$ be a closable operator in a Banach space $X$.  If $G=\overline{A}$ is the generator
of a $C_0$-semigroup on $X$, then no other extension of $A$ is the generator
of a $C_0$-semigroup on $X$.
\end{lemma}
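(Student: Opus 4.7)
The plan is to show that any competing extension $\tilde{G}$ of $A$ that generates a $C_0$-semigroup must coincide with $G = \overline{A}$. I would split the argument into two observations of standard type.

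First, I would use the fact that generators of $C_0$-semigroups are closed operators. Since $\tilde{G}$ extends $A$ and $\tilde{G}$ is closed, $\tilde{G}$ automatically extends the closure $\overline{A} = G$. So without loss of generality we may assume $G \subseteq \tilde{G}$, and the task reduces to ruling out \emph{proper} extensions among $C_0$-semigroup generators.

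Second, I would exploit the nonemptiness of the resolvent sets: both $G$ and $\tilde{G}$, being $C_0$-semigroup generators, have resolvent sets containing some common right half-plane $\{\lambda \in \mathbb{C} : \Re\lambda > \omega\}$ for $\omega$ large enough. Fix such a $\lambda$. Then $\lambda I - G : \mathcal{D}(G) \to X$ is a bijection, and so is $\lambda I - \tilde{G} : \mathcal{D}(\tilde{G}) \to X$. Given any $g \in \mathcal{D}(\tilde{G})$, surjectivity of $\lambda I - G$ produces $f \in \mathcal{D}(G)$ with $(\lambda I - G)f = (\lambda I - \tilde{G})g$. Since $f \in \mathcal{D}(G) \subseteq \mathcal{D}(\tilde{G})$ and $G \subseteq \tilde{G}$, the left-hand side equals $(\lambda I - \tilde{G})f$, so injectivity of $\lambda I - \tilde{G}$ forces $f = g$, giving $g \in \mathcal{D}(G)$. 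This proves $\mathcal{D}(\tilde{G}) \subseteq \mathcal{D}(G)$, and combined with $G \subseteq \tilde{G}$ yields $\tilde{G} = G$.

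There is no real obstacle here; the argument is a direct assembly of two standard facts from semigroup theory (closedness of generators and the bijectivity of $\lambda I - G$ on the resolvent set), and the whole proof can be written in a few lines. The only mildly delicate point is remembering to invoke the common right half-plane in $\rho(G) \cap \rho(\tilde{G})$ rather than an arbitrary $\lambda$, which is guaranteed by the Hille--Yosida growth bound applied to each semigroup.
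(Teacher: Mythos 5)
Your argument is correct and follows the same route as the paper: closedness of generators forces $G\subseteq\tilde G$, and then a point $\lambda$ in the common resolvent set makes both $\lambda I-G$ and $\lambda I-\tilde G$ bijective, which is incompatible with a proper inclusion. The paper phrases this last step as a one-line contradiction (a bijection cannot have a proper bijective extension), whereas you spell out the injectivity/surjectivity detail, but the proofs are the same.
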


\begin{proof}
Suppose that $G=\overline{A}$ and $H \supseteq A$ are generators
of $C_0$-semigroups with growth bounds $\omega_1$ and $\omega_2$ respectively,
and assume that $H \ne G$.
Clearly, $H \supseteq G$ since $H$ is closed.
Let $\lambda>\max\{\omega_1,\omega_2\}$.  Then $\lambda\in\rho(G)\cap\rho(H)$
and hence $\lambda I-G: \mathcal{D}(G) \to X$ and $\lambda I-H: \mathcal{D}(H) \to X$
are both bijective.  This is a contradiction since $\lambda I-H$ is a proper
extension of $\lambda I-G$.
\end{proof}

The following lemma, which is a special case of \cite[Remark~6.6]{banasiak2006perturbations},
will also be used.
For the convenience of the reader we present a short proof.

\begin{lemma}\label{splitting D(G) into difference of two positives}
Let $G$ be the generator of a positive $C_0$-semigroup on a Banach lattice $X$.
Then, for every $f \in \mathcal{D}(G)$, there exist $g$, $h \in \mathcal{D}(G)_+$
such that $f=g-h$.
\end{lemma}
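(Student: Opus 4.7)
The plan is to exploit positivity of the resolvent of $G$ at large real $\lambda$. Since $(S(t))_{t\ge 0}$ is a positive $C_0$-semigroup, for any $\lambda$ strictly larger than the growth bound $\omega_0$ of the semigroup we have $\lambda\in\rho(G)$ and the resolvent admits the Laplace integral representation
\begin{equation*}
  R(\lambda,G)x=\int_0^\infty e^{-\lambda t}S(t)x\,\rd t,\qquad x\in X,
\end{equation*}
which, together with positivity of each $S(t)$ and the fact that the positive cone $X_+$ is closed under the norm limits defining the Bochner integral, shows that $R(\lambda,G)$ maps $X_+$ into $X_+$.

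Now fix any $f\in\mathcal{D}(G)$, choose such a $\lambda>\omega_0$, and set $\varphi\coloneqq(\lambda I-G)f\in X$. Using the lattice structure I write $\varphi=\varphi_+-\varphi_-$ with $\varphi_\pm\in X_+$, and then define
\begin{equation*}
  g\coloneqq R(\lambda,G)\varphi_+,\qquad h\coloneqq R(\lambda,G)\varphi_-.
\end{equation*}
Since the range of $R(\lambda,G)$ equals $\mathcal{D}(G)$ and $R(\lambda,G)$ preserves $X_+$, both $g$ and $h$ lie in $\mathcal{D}(G)_+$. Linearity of $R(\lambda,G)$ then gives
\begin{equation*}
  g-h=R(\lambda,G)(\varphi_+-\varphi_-)=R(\lambda,G)(\lambda I-G)f=f,
\end{equation*}
which is the required decomposition.

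There is no real obstacle here beyond knowing the positivity of $R(\lambda,G)$ for large $\lambda$; everything else is essentially a two-line computation. The only point that merits a brief justification is that one genuinely obtains $R(\lambda,G)X_+\subseteq X_+$, and this is immediate from the Laplace representation above, which is applicable as soon as $\lambda$ exceeds the growth bound of the positive semigroup generated by $G$.
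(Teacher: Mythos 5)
Your proof is correct and follows essentially the same route as the paper: fix $\lambda$ above the growth bound, split $(\lambda I-G)f$ into its positive and negative parts, and apply the (positive) resolvent to each. The only difference is that you spell out the Laplace-transform justification for the positivity of $R(\lambda,G)$, which the paper simply asserts as a known consequence of the semigroup's positivity.
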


\begin{proof}
Let $f \in \mathcal{D}(G)$.
Further, let $\omega_0$ be the growth bound of the semigroup generated by $G$,
fix $\lambda>\omega_0$ and set $f_0 \coloneqq (\lambda I-G)f$.
Since $X$ is a Banach lattice, we have $f_0 = f_+ - f_-$
with $f_+,f_- \in X_+$.  Now let $g \coloneqq R(\lambda,G)f_+$
and $h \coloneqq R(\lambda,G)f_-$.
The fact that $G$ generates a positive semigroup implies that $R(\lambda,G)$
is a positive operator, and therefore $g,h \in \mathcal{D}(G)_+$.
Moreover,
\[
  f = R(\lambda,G)f_0 = R(\lambda,G)(f_+-f_-)
  = R(\lambda,G)f_+-R(\lambda,G)f_- = g-h,
\]
which proves the result.
\end{proof}

When the fragmentation coefficients satisfy Assumption~\ref{A1.1}
and \eqref{mass_conserved}, then, as mentioned in the previous section,
a formal calculation shows that the total mass is conserved.
Consequently, if $u$ is a non-negative solution of the fragmentation system,
and it is known that $u(t) \in X_{[1]}$ for $t \ge 0$, then we would expect $u$ to satisfy
\[
  \Vert u(t)\Vert_{[1]} = \sum_{n=1}^\infty nu_n(t) = \sum_{n=1}^\infty n\mathring{u}
  = \Vert \mathring{u} \Vert_{[1]} \qquad \text{for all} \ t \ge 0.
\]
Clearly this mass-conservation property will hold whenever the solution can be
written in terms of a stochastic semigroup on $X_{[1]}$.
To this end, the following proposition will prove useful.

\begin{proposition}\label{prop:stochastic_semigroup}
Let $(S(t))_{t \ge 0}$ be a positive $C_0$-semigroup on an AL-space, $X$,
with generator $G$, and let $\phi$ be the unique bounded linear extension
of the norm $\normcdot$ from $X_+$ to $X$.
\begin{myenum}
\item 
The semigroup $(S(t))_{t \ge 0}$ is stochastic if and only if
\begin{equation}\label{phi conserved}
  \phi\bigl(S(t)f\bigr) = \phi(f) \qquad \text{for all} \ f \in X.
\end{equation}
\item 
If $\phi(Gf)=0$ for all $f \in \mathcal{D}(G)_+$,
then \eqref{phi conserved} holds
and hence the semigroup $(S(t))_{t \ge 0}$ is stochastic.
\item 
Let $G_0$ be an operator such that $G=\overline{G_0}$.
If $\phi(G_0f)=0$ for all $f \in \mathcal{D}(G_0)_+$ and each $f \in \mathcal{D}(G_0)$
can be written as $f=g-h$, where $g, h \in \mathcal{D}(G_0)_+$,
then \eqref{phi conserved} holds and hence $(S(t))_{t \ge 0}$ is stochastic.
\end{myenum}
\end{proposition}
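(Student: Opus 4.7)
The plan is to prove the three parts in order, exploiting throughout the lattice decomposition $f=f_+-f_-$ that is available since $X$ is a Banach lattice, together with the two facts that characterise $\phi$ on an AL-space: $\phi$ is linear and bounded, and $\phi(f)=\Vert f\Vert$ whenever $f\in X_+$.

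For part~(i) the ``if'' direction is immediate: given $f\in X_+$, positivity of the semigroup gives $S(t)f\in X_+$, so $\Vert S(t)f\Vert=\phi(S(t)f)=\phi(f)=\Vert f\Vert$. For the ``only if'' direction, given $f\in X$ I would write $f=f_+-f_-$ and use linearity of both $\phi$ and $S(t)$ together with the stochastic assumption applied to $f_+$ and $f_-$ separately:
\[
  \phi\bigl(S(t)f\bigr) = \phi\bigl(S(t)f_+\bigr)-\phi\bigl(S(t)f_-\bigr)
  = \Vert S(t)f_+\Vert-\Vert S(t)f_-\Vert
  = \Vert f_+\Vert-\Vert f_-\Vert = \phi(f).
\]

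For part~(ii) the strategy is the standard differentiation argument. Fix $f\in\mathcal{D}(G)_+$ and set $\psi(t)\coloneqq\phi(S(t)f)$. Since $\mathcal{D}(G)$ is invariant under $(S(t))_{t\ge0}$ and the semigroup is positive, $S(t)f\in\mathcal{D}(G)_+$ for all $t\ge0$; combining the identity $\frac{\rd}{\rd t}S(t)f=GS(t)f$ with the boundedness (hence continuity) of $\phi$ gives $\psi'(t)=\phi(GS(t)f)=0$ by hypothesis, so $\psi(t)\equiv\phi(f)$ on $\mathcal{D}(G)_+$. To pass from $\mathcal{D}(G)_+$ to $\mathcal{D}(G)$ I invoke Lemma~\ref{splitting D(G) into difference of two positives} to decompose any $f\in\mathcal{D}(G)$ as $f=g-h$ with $g,h\in\mathcal{D}(G)_+$, and use linearity. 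Finally, since $\mathcal{D}(G)$ is dense in $X$ and both $\phi$ and $S(t)$ are continuous, \eqref{phi conserved} extends to all of $X$, after which part~(i) yields the stochastic property.

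For part~(iii) the idea is to reduce to part~(ii) by upgrading the hypothesis in two stages. First, the splitting assumption on $\mathcal{D}(G_0)$ combined with linearity of $\phi$ gives $\phi(G_0 f)=\phi(G_0 g)-\phi(G_0 h)=0$ for every $f=g-h\in\mathcal{D}(G_0)$. Second, since $G=\overline{G_0}$, any $f\in\mathcal{D}(G)$ is the limit of a sequence $(f_n)\subset\mathcal{D}(G_0)$ with $G_0 f_n\to Gf$; continuity of $\phi$ forces $\phi(Gf)=\lim\phi(G_0 f_n)=0$ for every $f\in\mathcal{D}(G)$, in particular for $f\in\mathcal{D}(G)_+$, and part~(ii) applies.

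The only delicate point is to keep the order of the reductions straight in~(iii): one cannot directly approximate an element of $\mathcal{D}(G)_+$ by positive elements of $\mathcal{D}(G_0)$, so the positivity must be dropped by the splitting hypothesis \emph{before} the closure is taken, and the two invariance facts used in~(ii) (invariance of $\mathcal{D}(G)$ under $(S(t))$, and positivity giving invariance of $\mathcal{D}(G)_+$) must be recorded explicitly. Beyond this bookkeeping the argument is straightforward.
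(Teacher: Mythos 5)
Your proposal is correct and follows essentially the same route as the paper: part (i) via the lattice decomposition $f=f_+-f_-$, part (ii) via differentiating $t\mapsto\phi(S(t)f)$ together with Lemma~\ref{splitting D(G) into difference of two positives} and density of $\mathcal{D}(G)$, and part (iii) by first removing positivity via the splitting hypothesis and then passing to the closure before invoking (ii). The only cosmetic difference is that in (ii) you treat $f\in\mathcal{D}(G)_+$ first and then split, whereas the paper splits a general $f\in\mathcal{D}(G)$ at the outset; both are equally valid.
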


\begin{proof}
(\romannum{1})
Assume that $(S(t))_{t \ge 0}$ is stochastic and let $f \in X$ and $t \ge 0$.
Then $f=f_+-f_-$, where $f_+,f_- \in X_+$,
and therefore
\begin{align*}
  \phi\bigl(S(t)f\bigr) &= \phi\bigl(S(t)f_+\bigr)-\phi\bigl(S(t)f_-\bigr)
  = \lVert S(t)f_+\rVert - \lVert S(t)f_-\rVert
  = \lVert f_+\rVert - \lVert f_-\rVert \\
  &= \phi(f_+)-\phi(f_-)
  = \phi(f).
\end{align*}
Conversely, when \eqref{phi conserved} holds,
we have $\lVert S(t)f \rVert = \phi(S(t)f) = \phi(f) = \Vert f \Vert$
for $f \in X_+$ and $t \ge 0$.

(\romannum{2})
Let $f \in \mathcal{D}(G)$.
From Lemma~\ref{splitting D(G) into difference of two positives},
there exist $g, h \in \mathcal{D}(G)_+$ such that $f=g-h$.  Then
\begin{align*}
  \frac{\rd}{\rd t}\bigl(\phi(S(t)f)\bigr)
  &= \phi\biggl(\frac{\rd}{\rd t}\bigl(S(t)f\bigr)\biggr)
  = \phi\bigl(GS(t)f\bigr) \\
  &= \phi\bigl(GS(t)g\bigr)-\phi\bigl(GS(t)h\bigr)
  = 0
\end{align*}
since $S(t)g$, $S(t)h \in \mathcal{D}(G)_+$.  Thus $\phi(S(t)f)=\phi(f)$
for all $f \in \mathcal{D}(G)$, and hence also for all $f \in X$,
since $\mathcal{D}(G)$ is dense in $X$.

(\romannum{3})
Let $f \in \mathcal{D}(G_0)$.  Then $f=g-h$ for
some $g, h \in \mathcal{D}(G_0)_+$ by assumption, and
\[
  \phi(G_0f) = \phi\bigl(G_0(g-h)\bigr) = \phi(G_0g)-\phi(G_0h) = 0.
\]
Thus $\phi(G_0f)=0$ for all $f \in \mathcal{D}(G_0)$.  Now let $f \in \mathcal{D}(G)$.
Then there exist $f^{(n)} \in \mathcal{D}(G_0)$, $n \in \mathbb{N}$,
such that $f^{(n)} \to f$  and $G_0f^{(n)} \to Gf$ as $n \to \infty$.  Therefore
\[
  \phi(Gf) = \phi\Bigl(\lim_{n \to \infty} G_0f^{(n)}\Bigr)
  = \lim_{n \to \infty} \phi(G_0f^{(n)})=0,
\]
and the result follows from part (\romannum{2}).
\end{proof}

We now use \cite[Theorem~2.7]{thieme2006stochastic} to obtain the
following proposition, which will later be applied to the fragmentation problem.

\begin{proposition}\label{corollary of G=closure}
Let $(X,\normcdot)$ and $(Z,\normcdotsub{Z})$ be AL-spaces, such that
\begin{myenum}
\item 
$Z$ is dense in $X$,
\item 
$(Z, \normcdotsub{Z})$ is continuously embedded in $(X, \normcdot)$.
\end{myenum}
Also, let $\phi$ and $\phi_Z$ be the linear extensions of $\normcdot$
from $X_+$ to $X$ and of $\normcdotsub{Z}$ from $Z_+$ to $Z$ respectively.
Let $A: \mathcal{D}(A) \to X$, $B: \mathcal{D}(B) \to X$ be operators in $X$
such that $\mathcal{D}(A) \subseteq \mathcal{D}(B)$.
Assume that the following conditions are satisfied.
\begin{myenuma}
\item 
$-A$ is positive;
\item 
$A$ generates a positive $C_0$-semigroup, $(T(t))_{t \geq 0}$, on $X$;
\item 
the semigroup $(T(t))_{t \ge 0}$ leaves $Z$ invariant and its restriction
to $Z$ is a (necessarily positive) $C_0$-semigroup
on $(Z, \normcdotsub{Z})$, with generator $\widetilde{A}$ given by
\[
  \widetilde{A}f = Af \qquad \text{for all} \
  f \in \mathcal{D}(\widetilde{A}) = \bigl\{f \in \mathcal{D}(A) \cap Z: Af \in Z\bigr\};
\]
\item 
$B|_{\mathcal{D}(A)}$ is a positive linear operator;
\item 
$\phi((A+B)f) \le 0$ for all $f \in \mathcal{D}(A)_+$;
\item 
$(A+B)f \in Z$ and $\phi_{Z}((A+B)f) \le 0$ for all $f \in \mathcal{D}(\widetilde{A})_+$;
\item 
$\Vert Af \Vert \le \Vert f \Vert_Z$ for all $f \in \mathcal{D}(\widetilde{A})_+$.
\end{myenuma}
Then there exists a unique substochastic $C_0$-semigroup on $X$
which is generated by an extension, $G$, of $A+B$.
The operator $G$ is the closure of $A+B$.
Moreover, the semigroup $(S(t))_{t \ge 0}$ generated by $G$ leaves $Z$ invariant.
If $\phi((A+B)f)=0$ for all $f \in \mathcal{D}(A)_+$,
then $(S(t))_{t \ge 0}$ is stochastic.
\end{proposition}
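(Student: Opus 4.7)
The proof rests on the refined Thieme--Voigt theorem \cite[Theorem~2.7]{thieme2006stochastic}, together with Lemmas~\ref{uniqueness of extension generator} and~\ref{splitting D(G) into difference of two positives} and Proposition~\ref{prop:stochastic_semigroup}. I will organise the argument in three stages: (i) apply the Thieme--Voigt theorem to produce a substochastic $C_0$-semigroup on $X$ whose generator $G$ extends $A+B$, leaves $Z$ invariant, and coincides with $\overline{A+B}$; (ii) deduce uniqueness of the semigroup; (iii) upgrade to the stochastic case under the extra assumption.

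For stage~(i), I would observe that hypotheses (a)--(g) are exactly those of \cite[Theorem~2.7]{thieme2006stochastic}: the positivity of $-A$ and of the semigroup $(T(t))_{t\ge0}$ generated by $A$, the invariance of $Z$ with its generator $\widetilde{A}$, the positivity of $B|_{\mathcal{D}(A)}$, and the dissipative-type estimates in (e)--(g) involving both $\phi$ and $\phi_Z$. The theorem then produces a substochastic $C_0$-semigroup $(S(t))_{t\ge0}$ on $X$ generated by an extension $G$ of $A+B$, with $S(t)Z\subseteq Z$; the content of the refinement (which is precisely why (f) and (g) are demanded on top of (e)) is the identification $G=\overline{A+B}$.

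For stage~(ii), suppose $H$ were a second extension of $A+B$ generating a $C_0$-semigroup on $X$. Since $A+B$ is closable (it has the closed extension $G$), Lemma~\ref{uniqueness of extension generator} applied to $A+B$ with closure $G=\overline{A+B}$ forces $H=G$, so the generator, and hence the semigroup, is uniquely determined. For stage~(iii), assume $\phi((A+B)f)=0$ for all $f\in\mathcal{D}(A)_+$. I apply Proposition~\ref{prop:stochastic_semigroup}(iii) with $G_0:=A+B$, whose closure is $G$ by stage~(i). The vanishing of $\phi\circ G_0$ on positive elements of $\mathcal{D}(G_0)=\mathcal{D}(A)$ is the extra hypothesis, and the required decomposition $f=g-h$ with $g,h\in\mathcal{D}(A)_+$ for every $f\in\mathcal{D}(A)$ is supplied by Lemma~\ref{splitting D(G) into difference of two positives} applied to the positive $C_0$-semigroup $(T(t))_{t\ge0}$ from hypothesis (b). Proposition~\ref{prop:stochastic_semigroup}(iii) then yields the stochastic property of $(S(t))_{t\ge0}$.

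The main obstacle is not any single bookkeeping step but the identification $G=\overline{A+B}$ invoked in stage~(i): this is not formal but the substantive content of the Thieme--Voigt refinement, and is exactly the reason for introducing the auxiliary AL-space $(Z,\normcdotsub{Z})$ together with the more stringent conditions (f) and (g). Once that identification is in hand, the remaining claims reduce to direct applications of the three preliminary results.
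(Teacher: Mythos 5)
Your proposal is correct and follows essentially the same route as the paper: apply \cite[Theorem~2.7]{thieme2006stochastic} to obtain the substochastic semigroup with $G=\overline{A+B}$ and the invariance of $Z$, then Lemma~\ref{uniqueness of extension generator} for uniqueness and Proposition~\ref{prop:stochastic_semigroup}\,(\romannum{3}) combined with Lemma~\ref{splitting D(G) into difference of two positives} for stochasticity. The only point you gloss over is that (a)--(g) are not literally the hypotheses of the Thieme--Voigt theorem: the paper still has to combine (f) and (g) into the single estimate $\phi_Z((A+B)f)\le 0\le \Vert f\Vert_Z-\Vert Af\Vert$ and, using the splitting lemma on $\mathcal{D}(\widetilde A)$, to upgrade the positive-cone statement in (f) to $B(\mathcal{D}(\widetilde A))\subseteq Z$ --- routine steps, but they are the actual content of the verification.
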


\begin{proof}
We first show that the conditions of \cite[Theorem~2.7]{thieme2006stochastic} hold.
From (ii) and the fact that $(Z,\normcdotsub{Z})$ is an AL-space,
it is clear that \cite[Assumption~2.5]{thieme2006stochastic} is satisfied.
Also, from (f) and (g) we obtain that
\[
  \phi_Z\bigl((A+B)f\bigr) \le 0 \le \Vert f \Vert_Z - \Vert Af \Vert
\]
for all $f \in \mathcal{D}(\widetilde{A})_+$.
Moreover, (f) and the definition of $\widetilde{A}$ imply that
$Bf \in Z$ for all $f \in \mathcal{D}(\widetilde{A})_+$.
Consequently, if we now take $f \in D(\widetilde{A})$ and use
Lemma~\ref{splitting D(G) into difference of two positives}
to express $Bf$ as $Bg-Bh$, where $g, h \in \mathcal{D}(\widetilde{A})_+$,
then it follows easily that $B(\mathcal{D}(\widetilde{A})) \subseteq Z$.
Thus, all the assumptions of \cite[Theorem~2.7]{thieme2006stochastic}
are satisfied and therefore $G=\overline{A+B}$ is the  generator of
a substochastic semigroup $(S(t))_{t \ge 0}$, which leaves $Z$ invariant.
That no other extension of $A+B$ can generate a $C_0$-semigroup on $X$
is an immediate consequence of Lemma~\ref{uniqueness of extension generator}.
Finally, since $A$ generates a substochastic $C_0$-semigroup,
it follows from Lemma~\ref{splitting D(G) into difference of two positives}
that we can write any $f \in \mathcal{D}(A)=\mathcal{D}(A+B)$ as $f=g-h$,
where $g, h \in \mathcal{D}(A)_+$.
An application of Proposition~\ref{prop:stochastic_semigroup}\,(\romannum{3})
then yields the stochasticity result.
\end{proof}

\section{The fragmentation semigroup}
\label{The Generator of the Fragmentation Semigroup}

In this section, we begin our analysis of the fragmentation system \eqref{full frag system}
by investigating the associated ACP \eqref{weighted frag ACP},
which we recall takes the form
\[
  u'(t) = A^{(w)}u(t)+B^{(w)}u(t), \quad t>0; \qquad u(0)=\mathring{u},
\]
where $A^{(w)}$ and $B^{(w)}$ are defined in $\ell_w^1$ by \eqref{A^w equation} and
\eqref{B^w equation} respectively.
A direct application of Proposition~\ref{corollary of G=closure} will establish that,
under appropriate conditions on the weight $w$,  $G^{(w)} =\overline{A^{(w)}+B^{(w)}}$
generates a  substochastic $C_0$-semigroup, $(S^{(w)}(t))_{t \ge 0}$, on $\ell_w^1$.
As no other extension of $A^{(w)}+B^{(w)}$ generates a $C_0$-semigroup on $\ell_w^1$,
we shall refer to  $(S^{(w)}(t))_{t \ge 0}$ as \emph{the} fragmentation semigroup
on $\ell_w^1$.  In the process of proving the existence of the fragmentation semigroup,
we shall also obtain explicit subspaces of $\ell_w^1$ which are invariant
under $(S^{(w)}(t))_{t \ge 0}$.

First we note that $\ell_w^1$ is an AL-space, with positive cone
\[
  (\ell_w^1)_+ = \bigl\{f = (f_n)_{n=1}^\infty \in \ell_w^1:
  f_n \ge 0 \ \text{for all} \ n \in \mathbb{N}\bigr\},
\]
whenever $w=(w_n)_{n=1}^{\infty}$ is a positive sequence.
Moreover, in this case the unique bounded linear functional, $\phi_w$,
that extends $\normcdotsub{w}$ from $(\ell_w^1)_+$ to $\ell_w^1$ is given by
\begin{equation}\label{phi_w}
  \phi_w(f) = \sum\limits_{n=1}^{\infty} w_n f_n \qquad \text{for all} \ f \in \ell_w^1.
\end{equation}
We recall also that if we take $w_n=n$ for all $n \in \mathbb{N}$,
then $\ell_w^1 = X_{[1]}$ and $\normcdotsub{w} = \normcdotsub{[1]}$.
For this specific case, we shall represent $\phi_w,\,A^{(w)}$ and $B^{(w)}$
by $M_1$, $A_1$ and $B_1$ respectively,
and consequently the ACP \eqref{weighted frag ACP} on $X_{[1]}$ will be written as
\begin{equation}\label{ACP in X}
  u'(t) = A_1u(t)+B_1u(t), \quad t>0; \qquad u(0)=\mathring{u}.
\end{equation}

From physical considerations, it is clear that the initial condition, $\mathring{u}$,
in the ACP \eqref{weighted frag ACP} must necessarily be non-negative,
and similarly, if $u:[0,\infty) \to  \ell_w^1$ is the corresponding solution,
then we require $u(t)$ to be non-negative for all $t \ge 0$.
Moreover, if we assume \eqref{local_mass_non_increasing} to
hold, or, equivalently, \eqref{local mass conservation lambda}
with $\lambda_j \in [0,1]$,
we expect from \eqref{massode} that mass is either lost
or conserved during fragmentation.
From \eqref{total mass} and the definition of the norm on $X_{[1]}$,
this is equivalent to
\begin{equation}\label{mass loss/conservation in solution}
  \Vert u(t) \Vert_{[1]} \le \Vert \mathring{u} \Vert_{[1]} \qquad
  \text{for all} \ t \ge 0,
\end{equation}
with equality being required in the mass-conserving case,
provided that $w$ is such that $\ell_w^1 \subseteq X_{[1]}$.

For convenience, we include the following elementary result which states
that the operator $A^{(w)}$ generates a substochastic semigroup on $\ell_w^1$
for any non-negative weight~$w$.

\begin{lemma}\label{A is a generator}
Let $\ell_w^1$ and $\normcdotsub{w}$ be defined by \eqref{weighted l^1 space}
and \eqref{weighted l^1 space norm}, respectively,
and let \eqref{fragmentation rate assumption} hold.
Then the operator $A^{(w)}$, defined by \eqref{A^w equation}, is the generator
of a substochastic $C_0$-semigroup, $(T^{(w)}(t))_{t \ge 0}$, on $\ell_w^1$,
which is given, for $t \ge 0$, by the infinite diagonal matrix
$\diag(v_1(t),v_2(t),\ldots)$, where $v_n(t) = e^{-a_nt}$ for all $n \in \mathbb{N}$.
\end{lemma}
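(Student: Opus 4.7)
The plan is to construct the candidate semigroup explicitly as diagonal multiplication and then verify each required property in turn. Specifically, set $\bigl(T^{(w)}(t) f\bigr)_n \coloneqq e^{-a_n t} f_n$ for $t \ge 0$, $n \in \mathbb{N}$ and $f = (f_n)_{n=1}^\infty \in \ell_w^1$. Since $a_n \ge 0$ by \eqref{fragmentation rate assumption}, the diagonal entries satisfy $0 \le e^{-a_n t} \le 1$, from which two observations are immediate: $T^{(w)}(t)$ maps $\ell_w^1$ into itself with $\Vert T^{(w)}(t)f \Vert_w = \sum_n w_n e^{-a_n t}|f_n| \le \Vert f\Vert_w$, and it is positive. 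The semigroup law $T^{(w)}(t+s) = T^{(w)}(t)\,T^{(w)}(s)$ and $T^{(w)}(0) = I$ follow coordinatewise from $e^{-a_n(t+s)} = e^{-a_n t}e^{-a_n s}$.

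Next I would establish strong continuity at $t=0$. For each fixed $f \in \ell_w^1$, I write
\[
  \bigl\Vert T^{(w)}(t)f - f\bigr\Vert_w = \sum_{n=1}^\infty w_n\bigl(1 - e^{-a_n t}\bigr)|f_n|,
\]
note that each summand tends to $0$ as $t \to 0^+$, and that each is dominated by the summable majorant $w_n|f_n|$; dominated convergence for series gives the desired limit. This, combined with the contractive semigroup properties already obtained, shows $(T^{(w)}(t))_{t\ge 0}$ is a substochastic $C_0$-semigroup.

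The final and most delicate step is identifying the generator with $A^{(w)}$ as defined in \eqref{A^w equation}. Let $\widetilde A$ denote the generator of $(T^{(w)}(t))_{t \ge 0}$. For the inclusion $A^{(w)} \subseteq \widetilde A$, I would take $f \in \mathcal{D}(A^{(w)})$, so that $\sum_n w_n a_n|f_n| < \infty$, and show
\[
  \biggl\Vert\frac{T^{(w)}(t)f - f}{t} - A^{(w)}f\biggr\Vert_w
  = \sum_{n=1}^\infty w_n\biggl|\frac{e^{-a_n t}-1}{t} + a_n\biggr| |f_n| \longrightarrow 0
\]
as $t \to 0^+$, using the elementary bound $|(e^{-a_n t}-1)/t| \le a_n$ to provide the summable majorant $2 w_n a_n|f_n|$ and again invoking dominated convergence. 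For the reverse inclusion, I would argue directly with the resolvent: for any $\lambda > 0$, the equation $(\lambda I - A^{(w)})f = g$ has the unique solution $f_n = g_n/(\lambda + a_n)$, which lies in $\ell_w^1$ whenever $g$ does (since $1/(\lambda + a_n) \le 1/\lambda$). Hence $\lambda I - A^{(w)}$ is bijective from $\mathcal{D}(A^{(w)})$ onto $\ell_w^1$. Since $\lambda I - \widetilde A$ is also bijective and extends $\lambda I - A^{(w)}$, the two operators must coincide, giving $\widetilde A = A^{(w)}$. The main obstacle is this generator identification, but the resolvent computation on a diagonal operator is completely explicit, so the argument is routine once organised in this order.
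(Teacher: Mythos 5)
Your proof is correct and complete; the paper itself states this lemma as an elementary fact without proof, and your argument (explicit diagonal semigroup, dominated convergence for strong continuity and for the inclusion $A^{(w)}\subseteq\widetilde A$, then the resolvent bijectivity argument to rule out a proper extension) is exactly the standard one the authors presumably had in mind. In particular, your final step is the same reasoning as the paper's Lemma~\ref{uniqueness of extension generator}, and your verification that $f_n=g_n/(\lambda+a_n)$ lands in $\mathcal{D}(A^{(w)})$ (not merely in $\ell_w^1$) is the one detail that needed checking and is handled correctly by $a_n/(\lambda+a_n)\le 1$.
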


For the remainder of this section, the weight, $w$, will be required to satisfy
the following assumption.

\begin{assumption}\label{assumption on weight for generation}
\rule{0ex}{1ex}
\begin{myenum}
\item 
$w_n \ge n$ \, for all $n \in \mathbb{N}$.
\item 
There exists $\kappa \in (0,1]$ such that
\begin{equation}\label{condition on weights}
  \sum\limits_{n=1}^{j-1} w_nb_{n,j} \leq \kappa w_j \qquad
  \text{for all} \ j=2,3,\ldots.
\end{equation}
\end{myenum}
\end{assumption}

\begin{remark}\label{remark_w_n/n_increasing}
Let $w$ be such that $\left(w_n/n\right)_{n=1}^{\infty}$ is increasing and
let \eqref{local_mass_non_increasing} hold.  Then
\[
  \sum\limits_{n=1}^{j-1} w_n b_{n,j} = \sum\limits_{n=1}^{j-1} \frac{w_n}{n}nb_{n,j}
  \le \frac{w_j}{j} \sum\limits_{n=1}^{j-1} nb_{n,j}
  \le \frac{w_j}{j}j
  = w_j.
\]
Hence \eqref{condition on weights} is satisfied with $\kappa=1$.
In particular, if \eqref{local_mass_non_increasing} holds,
then Assumption~\ref{assumption on weight for generation} is automatically satisfied
by any weight of the form $w_n=n^p$, $p \ge 1$.
\end{remark}

It is an immediate consequence of Assumption~\ref{assumption on weight for generation}
that, for any $f \in \mathcal{D}(A^{(w)})_+$, we have
\begin{equation}\label{phiwBwf_le_kappa_phiwAw}
\begin{aligned}
  \phi_w\bigl(B^{(w)}f\bigr)
  &= \sum\limits_{n=1}^{\infty} w_n\sum\limits_{j=n+1}^{\infty} a_jb_{n,j}f_j
  = \sum\limits_{j=2}^{\infty} \Biggl(\sum\limits_{n=1}^{j-1} w_n b_{n,j}\Biggr)a_jf_j \\
  &\le \kappa\sum\limits_{j=1}^{\infty} w_ja_jf_j
  = -\kappa\phi_w\bigl(A^{(w)}f\bigr).
\end{aligned}
\end{equation}
Consequently, for all $f \in \mathcal{D}(A^{(w)})$,
\begin{equation}\label{B bounded by A}
\begin{aligned}
  \Vert B^{(w)}f \Vert_w
  &= \sum_{n=1}^\infty w_n\bigg|\sum\limits_{j=n+1}^{\infty} a_jb_{n,j}f_j\bigg|
  \le \phi_w\bigl(B^{(w)}|f|\bigr) \\
  &\le -\kappa\phi_w\bigl(A^{(w)}|f|\bigr)
  = \kappa\Vert A^{(w)}f \Vert_w,
\end{aligned}
\end{equation}
from which it follows that
\begin{equation}\label{domAw_Bw_incl}
  \mathcal{D}(A^{(w)}) \subseteq \mathcal{D}(B^{(w)})
  \quad\text{and}\quad
  \mathcal{D}\bigl(A^{(w)}+B^{(w)}\bigr)
  = \mathcal{D}(A^{(w)}) \cap \mathcal{D}(B^{(w)})
  =\mathcal{D}(A^{(w)}).
\end{equation}
We now apply Proposition~\ref{corollary of G=closure} to
the operators $A^{(w)}$ and $B^{(w)}$.
This involves the construction of a suitable subspace of $\ell_w^1$,
and to this end we require a sequence $(c_n)_{n=1}^\infty$ that satisfies
\begin{equation}\label{conditions for c_n}
  c_n \le c_{n+1} \qquad \text{and} \qquad a_n \le c_n \qquad
  \text{for all} \ n \in \mathbb{N}.
\end{equation}
Note that such a sequence can always be found.  For example, we can take
\begin{equation}\label{maximal choice of cn}
  c_n = \max\{a_1,\ldots,a_n\} \qquad \text{for} \ n=1,2,\ldots.
\end{equation}
Let $C^{(w)}$ be the corresponding multiplication operator, defined by
\begin{equation}\label{C^w definition}
  [C^{(w)}f]_n = -c_n f_n, \;\; n \in \mathbb{N}, \qquad
  \mathcal{D}(C^{(w)})
  = \biggl\{f \in \ell_w^1:
  \sum\limits_{n=1}^{\infty} w_nc_n|f_n|<\infty\biggr\},
\end{equation}
and equip $\mathcal{D}(C^{(w)})$ with the graph norm
\begin{equation}\label{graph_norm_C^w}
  \Vert f \Vert_{C^{(w)}} = \Vert f \Vert_w + \Vert C^{(w)}f \Vert_w
  = \sum_{n=1}^\infty (w_n+w_nc_n)|f_n|, \qquad
  f \in \mathcal{D}(C^{(w)}).
\end{equation}
Clearly, $(\mathcal{D}(C^{(w)}), \normcdotsub{C^{(w)}})
= (\ell_{\widetilde w}^1,\normcdotsub{\widetilde w})$
with weight $\widetilde{w} = (\widetilde{w}_n)_{n=1}^\infty$ where
\begin{equation}\label{def_tilde_w}
  \widetilde w_n = w_n + w_n c_n, \qquad n \in \mathbb{N},
\end{equation}
and hence $(\ell_{\widetilde w}^1, \normcdotsub{\widetilde{w}})$ is an AL-space,
and the unique linear extension of $\normcdotsub{\widetilde{w}}$
from $(\ell_{\widetilde w}^1)_+$ to $\ell_{\widetilde w}^1$ is given
by $\phi_{\widetilde w}(f)=\sum\limits_{n=1}^\infty \widetilde{w}_n f_n$
for $f \in \ell_{\widetilde w}^1$.

We note that the choice \eqref{maximal choice of cn} for $(c_n)_{n=1}^{\infty}$
is `maximal' in the sense that if $(\hat{c}_n)_{n=1}^{\infty}$ is any other
monotone increasing sequence that dominates $(a_n)_{n=1}^{\infty}$,
and $\widehat{C}$ is defined analogously to \eqref{C^w definition},
then $\mathcal{D}(\widehat{C}^{(w)}) \subseteq \mathcal{D}(C^{(w)})$.

\begin{theorem}\label{G=closure for frag}
Let Assumptions~\ref{A1.1} and \ref{assumption on weight for generation} hold.
Then $G^{(w)}=\overline{A^{(w)}+B^{(w)}}$ is the generator of a
substochastic $C_0$-semigroup, $(S^{(w)}(t))_{t \ge 0}$, on $\ell_w^1$.
Moreover, $(S^{(w)}(t))_{t \ge 0}$ leaves $\mathcal{D}(C^{(w)})=\ell_{\widetilde w}^1$
invariant, where $\mathcal{D}(C^{(w)})$ and $\widetilde w$ are
defined in \eqref{C^w definition} and \eqref{def_tilde_w}, respectively,
and $(c_n)_{n=1}^{\infty}$ satisfies \eqref{conditions for c_n}.
If, in addition, \eqref{mass_conserved} holds
and $w_n=n$ for all $n \in \mathbb{N}$, then the semigroup, $(S_1(t))_{t \ge 0}$,
generated by $G_1=\overline{A_1 +B_1}$ is stochastic on $X_{[1]}$.
\end{theorem}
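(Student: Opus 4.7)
The plan is to apply Proposition~\ref{corollary of G=closure} with the ambient AL-space $X = \ell_w^1$, the smaller AL-space $Z = \mathcal{D}(C^{(w)}) = \ell_{\widetilde w}^1$ (which is an AL-space since $\widetilde{w}$ is a positive weight), and the operators $A = A^{(w)}$, $B = B^{(w)}$. The density of $\ell_{\widetilde w}^1$ in $\ell_w^1$ follows because both contain the finitely supported sequences, which are $\normcdotsub{w}$-dense in $\ell_w^1$; continuity of the embedding follows from $\widetilde w_n \ge w_n$. By \eqref{domAw_Bw_incl} we have $\mathcal{D}(A^{(w)}) \subseteq \mathcal{D}(B^{(w)})$, as required.

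Conditions (a), (b), (d) are immediate: $-A^{(w)}$ and $B^{(w)}|_{\mathcal{D}(A^{(w)})}$ are positive by inspection, and Lemma~\ref{A is a generator} provides the positive semigroup $(T^{(w)}(t))_{t\ge 0}$ generated by $A^{(w)}$. For (c), the diagonal semigroup $T^{(w)}(t) = \diag(e^{-a_n t})$ is a contraction in every weighted $\ell^1$ norm, so it maps $\ell_{\widetilde w}^1$ into itself; strong continuity on $(\ell_{\widetilde w}^1, \normcdotsub{\widetilde w})$ follows from the contractivity together with strong continuity on the dense set of finitely supported sequences, and the generator is then the expected restriction. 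Condition (e) is precisely \eqref{phiwBwf_le_kappa_phiwAw} together with $\kappa \le 1$: it gives $\phi_w((A^{(w)}+B^{(w)})f) \le (\kappa - 1)\sum_n w_n a_n f_n \le 0$. Condition (g) follows at once from $a_n \le c_n$: for $f \in \mathcal{D}(\widetilde A)_+$,
\[
  \|A^{(w)}f\|_w = \sum_{n=1}^\infty w_n a_n f_n
  \le \sum_{n=1}^\infty w_n c_n f_n \le \|f\|_{\widetilde w}.
\]

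The main technical step, and the only one requiring real work, is condition (f): I need the analogue of \eqref{phiwBwf_le_kappa_phiwAw} with $\phi_w$ replaced by $\phi_{\widetilde w}$. Here is where the monotonicity $c_n \le c_{n+1}$ from \eqref{conditions for c_n} is decisive. For $n < j$ we have $\widetilde w_n = w_n(1 + c_n) \le w_n(1 + c_j)$, so Assumption~\ref{assumption on weight for generation}(ii) yields
\[
  \sum_{n=1}^{j-1} \widetilde{w}_n b_{n,j}
  \le (1+c_j)\sum_{n=1}^{j-1} w_n b_{n,j}
  \le \kappa(1+c_j) w_j = \kappa\, \widetilde w_j.
\]
Fubini then gives $\phi_{\widetilde w}(B^{(w)}f) \le -\kappa\,\phi_{\widetilde w}(A^{(w)}f)$ for $f \in \mathcal{D}(\widetilde A)_+$, which both establishes $B^{(w)}f \in \ell_{\widetilde w}^1$ (hence $(A^{(w)}+B^{(w)})f \in \ell_{\widetilde w}^1$) and gives $\phi_{\widetilde w}((A^{(w)}+B^{(w)})f) \le (\kappa-1)\|A^{(w)}f\|_{\widetilde w} \le 0$. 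With all seven hypotheses of Proposition~\ref{corollary of G=closure} in place, it delivers the substochastic $C_0$-semigroup generated by $G^{(w)} = \overline{A^{(w)}+B^{(w)}}$ together with the invariance of $\mathcal{D}(C^{(w)}) = \ell_{\widetilde w}^1$.

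For the stochastic statement in $X_{[1]}$, I would invoke the final assertion of Proposition~\ref{corollary of G=closure}: it suffices to show $\phi_w((A_1+B_1)f) = 0$ for every $f \in \mathcal{D}(A_1)_+$ when $w_n = n$ and \eqref{mass_conserved} holds. A standard Fubini interchange combined with \eqref{mass_conserved} gives
\[
  \phi_w\bigl(B_1 f\bigr)
  = \sum_{j=2}^\infty a_j f_j \sum_{n=1}^{j-1} n b_{n,j}
  = \sum_{j=2}^\infty j\, a_j f_j,
\]
while $\phi_w(A_1 f) = -\sum_{n=1}^\infty n a_n f_n = -\sum_{j=2}^\infty j a_j f_j$, using $a_1 = 0$. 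Adding these yields zero and hence stochasticity. The main obstacle, as noted, is the weight-transfer estimate for (f); everything else is bookkeeping built on the computations already set up in \eqref{phiwBwf_le_kappa_phiwAw}--\eqref{B bounded by A}.
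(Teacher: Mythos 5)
Your proposal is correct and follows essentially the same route as the paper: both apply Proposition~\ref{corollary of G=closure} with $X=\ell_w^1$, $Z=\ell_{\widetilde w}^1$, verify (a)--(g) in the same way, and in particular both hinge on the same weight-transfer estimate $\sum_{n=1}^{j-1}\widetilde w_n b_{n,j}\le(1+c_j)\sum_{n=1}^{j-1}w_n b_{n,j}\le\kappa\widetilde w_j$ via the monotonicity of $(c_n)$ to obtain condition (f). Your direct Fubini computation for stochasticity is just an unwound version of the paper's observation that equality holds in \eqref{condition on weights} and hence in \eqref{phiwBwf_le_kappa_phiwAw} when $w_n=n$ and \eqref{mass_conserved} holds.
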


\begin{proof}
We show that the conditions (\romannum{1}), (\romannum{2}) and (a)--(g)
of Proposition~\ref{corollary of G=closure} are all satisfied
when $A=A^{(w)}$, $B=B^{(w)}$ and the AL-spaces $(X, \normcdot)$
and $(Z,\normcdotsub{Z})$ are, respectively, $\ell_w^1$
and $(\mathcal{D}(C^{(w)}),\normcdotsub{C^{(w)}})
=(\ell_{\widetilde w}^1,\normcdotsub{\widetilde w})$.

Clearly, $\ell_{\widetilde w}^1$ is dense in $\ell_w^1$ and
continuously embedded since $w_n \le \widetilde w_n$, $n \in \mathbb{N}$.
It follows that (i) and (ii) both hold.

Condition (a) is obviously satisfied by $A^{(w)}$, and, for (b),
we apply Lemma~\ref{A is a generator} to establish that $A^{(w)}$
generates a substochastic $C_0$-semigroup, $(T^{(w)}(t))_{t \ge 0}$, on $\ell_w^1$.
It is easy to see that the semigroup $(T^{(w)}(t))_{t \ge 0}$
leaves $\ell_{\widetilde w}^1$ invariant and the generator of the restriction
to $\ell_{\widetilde w}^1$ is $A^{(\widetilde w)}$, the part of $A^{(w)}$
in $\ell_{\widetilde w}^1$; this shows (c).

It is also clear that $B^{(w)}$ is positive.
From \eqref{phiwBwf_le_kappa_phiwAw} we obtain that,
for $f \in \mathcal{D}(A^{(w)})_+$,
\begin{equation}\label{phiwAwBwle0}
\begin{aligned}
  \phi_w\bigl((A^{(w)}+B^{(w)})f\bigr)
  &= \phi_w(A^{(w)}f) + \phi_w(B^{(w)}f) \\
  &\le \phi_w(A^{(w)}f) - \kappa\phi_w(A^{(w)}f) \le 0.
\end{aligned}
\end{equation}
Hence (d) and (e) hold.

Since $w_n \ge n$, by Assumption~\ref{assumption on weight for generation}\,(i),
we have $\widetilde w_n = w_n+w_n c_n \ge n$, $n\in\mathbb{N}$.
Moreover, the monotonicity of $(c_n)_{n=1}^\infty$
and Assumption~\ref{assumption on weight for generation}\,(ii) imply that
\[
  \sum_{n=1}^{j-1} \widetilde{w}_n b_{n,j}
  = \sum\limits_{n=1}^{j-1} (1+c_n)w_n b_{n,j}
  \le (1+c_j)\sum\limits_{n=1}^{j-1} w_n b_{n,j}
  \le \kappa (1+c_j)w_j
  = \kappa \widetilde{w}_j
\]
for all $j \in \mathbb{N}$.
This means that Assumption~\ref{assumption on weight for generation} also holds
for the weight $\widetilde w$.  Therefore we obtain from \eqref{domAw_Bw_incl}
and \eqref{phiwAwBwle0}
that $\mathcal{D}(A^{(\widetilde w)}) \subseteq \mathcal{D}(B^{(\widetilde w)})$
and $\phi_{\widetilde w}((A^{(\widetilde w)}+B^{(\widetilde w)})f) \le 0$
for $f \in \mathcal{D}(A^{(\widetilde w)})_+$, and so (f) is also satisfied.
That (g) holds follows from
\[
  \Vert A^{(w)}f \Vert_w = \sum_{n=1}^\infty w_n a_n|f_n|
  \le \sum_{n=1}^\infty w_n c_n|f_n|
  \le \sum_{n=1}^\infty \widetilde{w}_n|f_n|
  = \|f\|_{\widetilde w}
\]
for $f \in \mathcal{D}(\tilde{A}^{(w)})_+$.

Thus, the conditions of Proposition~\ref{corollary of G=closure} are all satisfied
and therefore  $G^{(w)}=\overline{A^{(w)}+B^{(w)}}$ is the generator of
a substochastic $C_0$-semigroup, $(S^{(w)}(t))_{t \ge 0}$, on $\ell_w^1$,
which also leaves  $\mathcal{D}(C^{(w)})=\ell_{\widetilde w}^1$ invariant.

Finally, assume that \eqref{mass_conserved} is satisfied and $w_n=n$
for all $n \in \mathbb{N}$.  Then equality holds in \eqref{condition on weights}
with $\kappa=1$ and hence also in \eqref{phiwBwf_le_kappa_phiwAw},
and so, from Proposition~\ref{corollary of G=closure}, the semigroup generated
in this case is stochastic.
\end{proof}

\begin{remark}\label{stochastic in X}
Consider the case where $w_n=n$ for all $n \in \mathbb{N}$, so that $\ell_w^1 = X_{[1]}$,
and let Assumption~\ref{A1.1} and \eqref{local_mass_non_increasing} hold.
Then, by Remark~\ref{remark_w_n/n_increasing}, \eqref{condition on weights}
is also satisfied, and therefore,
from Theorem~\ref{G=closure for frag}, the operator  $G_1=\overline{A_1+B_1}$
is the generator of a substochastic $C_0$-semigroup, $(S_1(t))_{t \geq 0}$, on $X_{[1]}$.
It follows that the ACP
\begin{equation}\label{GACP in X}
  u'(t)=G_1u(t), \quad t>0; \qquad u(0)=\mathring{u},
\end{equation}
with $\mathring{u} \in \mathcal{D}(G_1)$,
has a unique classical solution, given by $u(t)=S_1(t)\mathring{u}$ for all $t \ge 0$.
Moreover, if $\mathring{u}\ge0$, then this solution is non-negative.
Now suppose that $\mathring{u} \in \mathcal{D}(G_1)_+$ and, in addition,
assume that \eqref{mass_conserved} holds.
Then the semigroup $(S_1(t))_{t \ge 0}$ is stochastic on $X_{[1]}$ and so,
from \eqref{X_1functional},
\[
  M_1\bigl(u(t)\bigr) = \Vert u(t) \Vert_{[1]}
  = \Vert S_1(t)\mathring{u} \Vert_{[1]}
  = \Vert \mathring{u} \Vert_{[1]}
  = M_1(\mathring{u}) \qquad \text{for all} \ t \ge 0,
\]
showing that $u(t)$ is a mass-conserving solution.
\end{remark}

With the help of Remark~\ref{stochastic in X} we obtain the following corollary.

\begin{corollary}\label{solution of GACP}
Let Assumptions~\ref{A1.1} and \ref{assumption on weight for generation} hold
and let $\mathring{u} \in \mathcal{D}(G^{(w)})$,
where $G^{(w)} = \overline{A^{(w)}+B^{(w)}}$ as in Theorem~\ref{G=closure for frag}.
Then the ACP
\begin{equation}\label{GACP}
  u'(t) = G^{(w)}u(t), \quad t>0; \qquad u(0) = \mathring{u}
\end{equation}
has a unique classical solution, given by $u(t)=S^{(w)}(t)\mathring{u}$.
This solution is non-negative if $\mathring{u} \in \mathcal{D}(G^{(w)})_+$.
Moreover, if \eqref{mass_conserved} holds
and $\mathring{u} \in \mathcal{D}(G^{(w)})_+$, then this solution is mass conserving.
\end{corollary}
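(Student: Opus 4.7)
The plan is to deduce everything from the generation result in Theorem~\ref{G=closure for frag} together with standard $C_0$-semigroup theory, using Remark~\ref{stochastic in X} to transfer the mass-conservation statement from $X_{[1]}$ to the general weighted setting. Since $G^{(w)}$ generates a substochastic $C_0$-semigroup $(S^{(w)}(t))_{t\ge 0}$ on $\ell_w^1$, for any $\mathring u \in \mathcal D(G^{(w)})$ the orbit $u(t) \coloneqq S^{(w)}(t)\mathring u$ is automatically the unique classical solution of \eqref{GACP}, and the positivity of a substochastic semigroup yields $u(t) \ge 0$ whenever $\mathring u \in \mathcal D(G^{(w)})_+$. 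Only the mass-conservation clause requires genuine work.

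For that clause, my strategy is to show that, under Assumption~\ref{assumption on weight for generation}, every trajectory in $\ell_w^1$ is also a trajectory of the $X_{[1]}$-semigroup $(S_1(t))_{t\ge 0}$, so that Remark~\ref{stochastic in X} applies verbatim. The inclusion $w_n \ge n$ delivers a continuous embedding $\ell_w^1 \hookrightarrow X_{[1]}$ together with $\mathcal D(A^{(w)}) \subseteq \mathcal D(A_1) = \mathcal D(A_1+B_1)$, on which the two sums agree pointwise. I would then propagate this to the closures: given $g \in \mathcal D(G^{(w)})$, pick approximants $g^{(n)} \in \mathcal D(A^{(w)})$ with $g^{(n)} \to g$ and $(A^{(w)}+B^{(w)})g^{(n)} \to G^{(w)}g$ in $\normcdotsub{w}$; the continuous embedding forces the same convergences in $\normcdotsub{[1]}$, and the closedness of $G_1$ (guaranteed by Remark~\ref{stochastic in X}) yields $g \in \mathcal D(G_1)$ with $G_1 g = G^{(w)} g$. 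In particular, $\mathring u \in \mathcal D(G_1)_+$.

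Having identified $G^{(w)}$ as a restriction of $G_1$, the orbit $u$ is continuous into $X_{[1]}$ and satisfies $u'(t) = G^{(w)} u(t) = G_1 u(t)$ there as well, so uniqueness for \eqref{GACP in X} (Remark~\ref{stochastic in X}) forces $u(t) = S_1(t)\mathring u$. The stochasticity of $(S_1(t))_{t \ge 0}$ under \eqref{mass_conserved}, combined with $M_1(f) = \Vert f\Vert_{[1]}$ for $f \ge 0$, then gives
\[
  M_1\bigl(u(t)\bigr) = \Vert S_1(t)\mathring u \Vert_{[1]} = \Vert \mathring u \Vert_{[1]} = M_1(\mathring u),
\]
which is precisely mass conservation. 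The main technical obstacle is the closure-propagation step; the rest is routine semigroup bookkeeping.
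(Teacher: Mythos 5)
Your proposal is correct and follows essentially the same route as the paper: reduce to the $X_{[1]}$ setting via the continuous embedding $\ell_w^1 \hookrightarrow X_{[1]}$ (valid since $w_n \ge n$), identify $S^{(w)}(t)\mathring{u}$ with $S_1(t)\mathring{u}$ by uniqueness of classical solutions of \eqref{GACP in X}, and invoke the stochasticity of $(S_1(t))_{t\ge0}$ from Remark~\ref{stochastic in X}. The closure-propagation step you flag as the main obstacle is exactly the point the paper dispatches with the bare assertion that $G^{(w)}$ is the part of $G_1$ in $\ell_w^1$; your approximation argument correctly supplies the (sufficient) inclusion $G^{(w)} \subseteq G_1$.
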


\begin{proof}
It follows immediately from Theorem~\ref{G=closure for frag}
that $u(t)=S^{(w)}(t)\mathring{u}$ is the unique classical solution
of \eqref{GACP} for all $\mathring{u} \in \mathcal{D}(G^{(w)})$.
Moreover, since $(S^{(w)}(t))_{t \ge 0}$ is substochastic, this solution
is non-negative if $\mathring{u} \in \mathcal{D}(G^{(w)})_+$.

Now assume that \eqref{mass_conserved} holds
and $\mathring{u} \in \mathcal{D}(G^{(w)})_+$.
Then $(S_1(t))_{t \ge 0}$ is a stochastic $C_0$-semigroup on $X_{[1]}$.
Additionally, since $w_n \ge n$ for all $n \in \mathbb{N}$, $\ell_w^1$
is continuously embedded in $X_{[1]}$ and so, as $u(t)$ is differentiable
in $\ell_w^1$, $u(t)$ is also differentiable in $X_{[1]}$ and the derivatives must coincide.
Moreover, since $G^{(w)}$ is the part of $G_1$ in $\ell_w^1$,
we have $u(t) \in \mathcal{D}(G_1)$.
Therefore, $u(t)=S^{(w)}(t)\mathring{u}$ is also a solution of \eqref{GACP in X},
and, by uniqueness of solutions, it follows
that $S^{(w)}(t)\mathring{u} = S_1(t)\mathring{u}$ for $t \ge 0$.
Remark~\ref{stochastic in X} then establishes that $u(t)=S^{(w)}(t)\mathring{u}$
is a mass-conserving solution.
\end{proof}

Note that even if $\mathring{u} \in \mathcal{D}(A^{(w)})$, the solution, $u(t)$,
of \eqref{GACP} need not belong to $\mathcal{D}(A^{(w)})$ for any $t > 0$.
Hence the existence of a solution of \eqref{weighted frag ACP}
is not guaranteed in general; one only has uniqueness of solutions.
However, the next theorem shows that under the stronger assumption
$\mathring{u} \in \mathcal{D}(C^{(w)})$ on the initial condition,
the ACP \eqref{weighted frag ACP} is well posed.

\begin{theorem}\label{solution for IC in D(C)}
Let Assumptions~\ref{A1.1} and \ref{assumption on weight for generation} hold.
For $\mathring{u} \in \mathcal{D}(C^{(w)})$, the ACP \eqref{weighted frag ACP}
has a unique classical solution given by $u(t)=S^{(w)}(t)\mathring{u}$, $t \ge 0$.
If $\mathring{u} \in \mathcal{D}(C^{(w)})_+$, then this solution is non-negative.
Moreover, if \eqref{mass_conserved} holds
and $\mathring{u} \in \mathcal{D}(C^{(w)})_+$, then the solution is mass conserving.
\end{theorem}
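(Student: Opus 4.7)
The plan is to reduce everything to Corollary~\ref{solution of GACP} by using the invariance of $\mathcal{D}(C^{(w)})$ established in Theorem~\ref{G=closure for frag} to show that the semigroup orbit remains in the unclosed domain $\mathcal{D}(A^{(w)}+B^{(w)})$, not merely in $\mathcal{D}(G^{(w)})$. So the main point to verify is a single inclusion together with the appropriate translation between the two ACPs.

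First I would observe the inclusion
\[
  \mathcal{D}(C^{(w)}) \subseteq \mathcal{D}(A^{(w)}) = \mathcal{D}\bigl(A^{(w)}+B^{(w)}\bigr),
\]
which is immediate since $a_n \le c_n$ by \eqref{conditions for c_n} gives $\sum_n w_n a_n|f_n| \le \sum_n w_n c_n|f_n| < \infty$ for $f\in\mathcal{D}(C^{(w)})$, while the equality on the right is \eqref{domAw_Bw_incl}. Combined with the invariance statement in Theorem~\ref{G=closure for frag}, namely $S^{(w)}(t)\mathcal{D}(C^{(w)}) \subseteq \mathcal{D}(C^{(w)})$ for all $t\ge 0$, this yields
\[
  S^{(w)}(t)\mathring{u} \in \mathcal{D}(A^{(w)})\cap\mathcal{D}(B^{(w)}) \qquad \text{for all } t\ge 0
\]
whenever $\mathring{u} \in \mathcal{D}(C^{(w)})$.

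Next I would apply Corollary~\ref{solution of GACP}: since $\mathcal{D}(C^{(w)}) \subseteq \mathcal{D}(A^{(w)}) \subseteq \mathcal{D}(G^{(w)})$, we know that $u(t) \coloneqq S^{(w)}(t)\mathring{u}$ is a classical solution of \eqref{GACP}, so $u \in C^1((0,\infty),\ell_w^1)\cap C([0,\infty),\ell_w^1)$ with $u'(t) = G^{(w)}u(t)$. But by the previous step $u(t) \in \mathcal{D}(A^{(w)}+B^{(w)})$, and on this common domain $G^{(w)} = \overline{A^{(w)}+B^{(w)}}$ acts simply as $A^{(w)}+B^{(w)}$. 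Therefore $u$ is a classical solution of \eqref{weighted frag ACP}. Non-negativity when $\mathring{u}\in\mathcal{D}(C^{(w)})_+$ follows from substochasticity of $(S^{(w)}(t))_{t\ge 0}$, and mass conservation under \eqref{mass_conserved} follows directly from the corresponding assertion in Corollary~\ref{solution of GACP}.

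For uniqueness, any classical solution $v$ of \eqref{weighted frag ACP} automatically satisfies $v'(t) = (A^{(w)}+B^{(w)})v(t) = G^{(w)}v(t)$ with $v(0)=\mathring{u}$, since $G^{(w)}$ extends $A^{(w)}+B^{(w)}$; hence $v$ is a classical solution of \eqref{GACP}, and uniqueness in Corollary~\ref{solution of GACP} forces $v(t) = S^{(w)}(t)\mathring{u}$. The only step requiring any real care is recognising that the invariance of $\mathcal{D}(C^{(w)}) = \ell^1_{\widetilde w}$ under the semigroup already gives stronger information than mere invariance of $\mathcal{D}(G^{(w)})$, and this is precisely what upgrades the abstract classical solution of \eqref{GACP} provided by Corollary~\ref{solution of GACP} to a genuine classical solution of the original ACP \eqref{weighted frag ACP}.
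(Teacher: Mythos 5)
Your proposal is correct and follows essentially the same route as the paper: both arguments rest on the inclusion $\mathcal{D}(C^{(w)})\subseteq\mathcal{D}(A^{(w)})$, the invariance of $\mathcal{D}(C^{(w)})$ under $(S^{(w)}(t))_{t\ge0}$ from Theorem~\ref{G=closure for frag}, the fact that $G^{(w)}$ coincides with $A^{(w)}+B^{(w)}$ on $\mathcal{D}(A^{(w)})$, and an appeal to Corollary~\ref{solution of GACP}. You merely spell out the uniqueness step (any classical solution of \eqref{weighted frag ACP} is one of \eqref{GACP} since $G^{(w)}$ extends $A^{(w)}+B^{(w)}$) slightly more explicitly than the paper does.
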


\begin{proof}
We know that $G^{(w)}$ and $A^{(w)}+B^{(w)}$ coincide on $\mathcal{D}(A^{(w)})$
and also that $u(t)=S^{(w)}(t)\mathring{u}$ is the unique solution of \eqref{GACP}
for $\mathring{u} \in \mathcal{D}(C^{(w)}) \subseteq \mathcal{D}(G^{(w)})$.
Since $(S^{(w)}(t))_{t \ge 0}$ leaves $\mathcal{D}(C^{(w)})$ invariant,
it follows that
$S^{(w)}(t)\mathring{u} \in \mathcal{D}(C^{(w)}) \subseteq \mathcal{D}(A^{(w)})$.
The result then follows from Corollary~\ref{solution of GACP}.
\end{proof}

The next proposition shows that if the sequence $(a_n)_{n=1}^{\infty}$ has a
certain additional property, then a unique solution of \eqref{weighted frag ACP}
exists for $\mathring{u} \in \mathcal{D}(A^{(w)})$.

\begin{proposition}\label{when does D(C)=D(A)}
Let $(a_n)_{n=1}^{\infty}$ be an unbounded sequence
such that \eqref{fragmentation rate assumption} holds.
Further, define the sequence $(c_n)_{n=1}^{\infty}$ by \eqref{maximal choice of cn}
and let $w=(w_n)_{n=1}^\infty$ be such that $w_n>0$ for all $n \in \mathbb{N}$.
Then $\mathcal{D}(C^{(w)})=\mathcal{D}(A^{(w)})$ if and only if
\begin{equation}\label{liminf condition for a and c}
  \liminf_{n \to \infty} \frac{a_n}{c_n}>0.
\end{equation}
\end{proposition}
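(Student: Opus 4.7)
The containment $\mathcal{D}(C^{(w)}) \subseteq \mathcal{D}(A^{(w)})$ holds unconditionally, since $0 \le a_n \le c_n$ gives $w_n a_n|f_n| \le w_n c_n|f_n|$ term by term. Thus the equivalence reduces to deciding when the reverse inclusion holds.

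For the forward implication, suppose $\liminf_{n\to\infty} a_n/c_n = \delta > 0$. Then there exists $N$ such that $c_n \le 2a_n/\delta$ for all $n \ge N$. Given $f \in \mathcal{D}(A^{(w)})$, I would split
\[
  \sum_{n=1}^\infty w_n c_n |f_n|
  = \sum_{n=1}^{N-1} w_n c_n |f_n| + \sum_{n=N}^\infty w_n c_n |f_n|
  \le \sum_{n=1}^{N-1} w_n c_n |f_n| + \frac{2}{\delta}\sum_{n=N}^\infty w_n a_n |f_n|.
\]
The first sum is finite because it contains finitely many terms and $f \in \ell_w^1$; the second is finite because $f \in \mathcal{D}(A^{(w)})$. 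Hence $f \in \mathcal{D}(C^{(w)})$.

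For the reverse implication I would argue by contrapositive and construct an explicit witness. Suppose $\liminf_{n\to\infty} a_n/c_n = 0$. Since $(a_n)$ is unbounded, $(c_n)$ is non-decreasing with $c_n \to \infty$. I would select a subsequence $(n_k)_{k=1}^\infty$ satisfying simultaneously
\[
  \frac{a_{n_k}}{c_{n_k}} \le \frac{1}{k^2} \qquad\text{and}\qquad c_{n_k} \ge 2^k,
\]
which is possible by first extracting a subsequence along which the ratio tends to $0$ and then thinning further using $c_n \to \infty$. I would then define $f = (f_n)_{n=1}^\infty$ by $f_{n_k} = 1/(w_{n_k} c_{n_k})$ and $f_n = 0$ otherwise. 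A direct computation gives
\[
  \sum_{n=1}^\infty w_n|f_n| = \sum_{k=1}^\infty \frac{1}{c_{n_k}} \le \sum_{k=1}^\infty 2^{-k} < \infty,
  \qquad
  \sum_{n=1}^\infty w_n a_n|f_n| = \sum_{k=1}^\infty \frac{a_{n_k}}{c_{n_k}} \le \sum_{k=1}^\infty \frac{1}{k^2} < \infty,
\]
while $\sum_{n=1}^\infty w_n c_n|f_n| = \sum_{k=1}^\infty 1 = \infty$. Hence $f \in \mathcal{D}(A^{(w)}) \setminus \mathcal{D}(C^{(w)})$, as required.

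The only non-routine step is the construction in the reverse direction. The main subtlety is confirming that one can simultaneously arrange $a_{n_k}/c_{n_k}$ to be small and $c_{n_k}$ to be large; the former is obtained directly from the assumed liminf, and the latter from the fact that the unboundedness of $(a_n)$ forces $c_n \to \infty$. Once both features are secured along a common subsequence, the choice $f_{n_k} = 1/(w_{n_k} c_{n_k})$ separates the two weighted sums by a factor of exactly $a_{n_k}/c_{n_k}$, which is the ratio whose liminf controls the problem.
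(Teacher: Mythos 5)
Your proposal is correct and follows essentially the same route as the paper: the unconditional inclusion $\mathcal{D}(C^{(w)})\subseteq\mathcal{D}(A^{(w)})$, the tail estimate $c_n\le\gamma^{-1}a_n$ for the forward direction, and a witness supported on a subsequence with $f_{n_k}\sim 1/(w_{n_k}c_{n_k})$ for the converse. The only (immaterial) difference is that the paper puts the extra factor $1/k$ into the coefficients of $f$ rather than demanding $c_{n_k}\ge 2^k$ of the subsequence.
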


\begin{proof}
Note first that the unboundedness of $(a_n)_{n=1}^\infty$ implies
that $c_n \to \infty$ as $n \to \infty$.
Since $c_n \ge a_n$ for all $n \in \mathbb{N}$,
we have $\mathcal{D}(C^{(w)}) \subseteq \mathcal{D}(A^{(w)})$.
If \eqref{liminf condition for a and c} holds, then there
exist $\gamma > 0$, $N \in \mathbb{N}$ such that $a_n \ge \gamma c_n$
for all $n \geq N$.  Let $f \in \mathcal{D}(A^{(w)})$.  Then
\begin{align*}
  \Vert C^{(w)}f \Vert_w = \sum\limits_{n=1}^{\infty} w_nc_n|f_n|
  &\le \sum\limits_{n=1}^{N-1} w_n c_n|f_n|
  + \frac{1}{\gamma} \sum\limits_{n=N}^{\infty} w_n a_n|f_n| \\
  &\le \sum\limits_{n=1}^{N-1} w_n c_n|f_n| + \frac{1}{\gamma} \Vert A^{(w)}f \Vert_w
  <\infty,
\end{align*}
and so  $\mathcal{D}(A^{(w)})=\mathcal{D}(C^{(w)})$.

Now suppose that $\liminf_{n \to \infty}(a_n/c_n) = 0$.
Then there exists a subsequence, $\left(a_{n_k}/c_{n_k}\right)_{k=1}^{\infty}$,
such that
\[
  c_{n_k} \ne 0, \quad \frac{a_{n_k}}{c_{n_k}} \le \frac{1}{k}
  \quad\text{and}\quad
  \frac{1}{c_{n_k}} \le \frac{1}{k}
  \qquad \text{for all} \ k \in \mathbb{N}.
\]
Let $f$ be such that
\begin{equation}
  f_j = \begin{cases}
          1/(c_{n_k}w_{n_k}k) \qquad &\text{when} \ j=n_k, \\[0.5ex]
          0 \qquad &\text{otherwise}.
        \end{cases}
\end{equation}
Then
\begin{align*}
  & \sum\limits_{n=1}^{\infty}a_nw_n|f_n|
  = \sum\limits_{k=1}^{\infty} a_{n_k}w_{n_k}\frac{1}{c_{n_k}w_{n_k}k}
  \le \sum\limits_{k=1}^{\infty} \frac{1}{k^2} < \infty,
  \\[1ex]
  & \sum\limits_{n=1}^\infty w_n|f_n|
  = \sum_{k=1}^\infty w_{n_k}\frac{1}{c_{n_k}w_{n_k}k}
  \le \sum_{k=1}^\infty \frac{1}{k^2} < \infty
  \qquad\text{and}\qquad
  \sum\limits_{n=1}^{\infty}c_nw_n|f_n|
  = \sum\limits_{k=1}^{\infty} \frac{1}{k} = \infty.
\end{align*}
It follows that $f \in \mathcal{D}(A^{(w)})\backslash \mathcal{D}(C^{(w)})$,
showing that $\mathcal{D}(C^{(w)})$ is a proper subset of $\mathcal{D}(A^{(w)})$.
\end{proof}

\begin{remark}
If $(a_n)_{n=1}^{\infty}$ is unbounded and eventually monotone increasing,
then $(c_n)_{n=1}^{\infty}$, given by \eqref{maximal choice of cn},
satisfies \eqref{liminf condition for a and c}.
Note that, in $X_{[1]}$, the invariance of $\mathcal{D}(A^{(w)})$ under the
fragmentation semigroup has already been established in \cite[Theorem~3.2]{mcbride2010strongly}
for the case when $(a_n)_{n=1}^{\infty}$ is monotone increasing.
\end{remark}

We end this section by obtaining an infinite matrix representation of the
fragmentation semigroup $(S^{(w)}(t))_{t \geq 0}$ on $\ell_w^1$,
which is used in Section~\ref{Asymptotic Behaviour of Solutions}.
Let Assumptions~\ref{A1.1} and \ref{assumption on weight for generation}
be satisfied so that $G^{(w)}=\overline{A^{(w)}+B^{(w)}}$ is the generator of
a substochastic $C_0$-semigroup, $(S^{(w)}(t))_{t \ge 0}$, on $\ell_w^1$.
For $n \in \mathbb{N}$, let $e_n \in \ell_w^1$ be given by
\begin{equation}\label{basis}
  \left(e_n\right)_k
  = \begin{cases}
      1 \qquad &\text{if} \ n=k, \\[0.5ex]
      0 \qquad &\text{otherwise},
    \end{cases}
\end{equation}
and let  $(s_{m,n}(t))_{m, n \in \mathbb{N}}$, be the infinite matrix defined by
\[
  s_{m,n}(t)=(S^{(w)}(t)e_n)_m \qquad \text{for all} \ m, n \in \mathbb{N}.
\]
Note that, since $(S^{(w)}(t))_{t \ge 0}$ is positive, $s_{m,n}(t) \ge 0$
for all $m, n \in \mathbb{N}$.
Now, each $f \in \ell_w^1$ can be expressed as $f=\sum\limits_{n=1}^{\infty} f_n e_n$,
where the infinite series is convergent in $\ell_w^1$.  Hence
\[
  \bigl(S^{(w)}(t)f\bigr)_m
  = \Biggl(\sum\limits_{n=1}^{\infty} f_n S^{(w)}(t)e_n\Biggr)_{\!m}
  = \sum\limits_{n=1}^{\infty} f_n s_{m,n}(t) \qquad \text{for all} \ m \in \mathbb{N},
\]
and therefore  $(S^{(w)}(t))_{t \ge 0}$ can be represented by the
matrix $(s_{m,n}(t))_{m, n \in \mathbb{N}}$.
To determine $s_{m,n}(t)$ more explicitly, fix $n \in \mathbb{N}$ and
let $(u_1(t),\ldots,u_n(t))$ be the unique solution of
the $n$-dimensional system
\begin{align}
  & u_m'(t) = -a_m u_m(t) + \sum_{j=m+1}^n a_j b_{m,j}u_j(t), \quad t>0;
  \qquad m = 1, 2, \ldots, n;
  \label{finite_system} \\[1ex]
  & u_n(0) = 1; \qquad u_m(0) = 0 \quad\text{for} \ m<n.
\end{align}
It is straightforward to check that $u(t)=(u_1(t),\ldots,u_n(t),0,0,\ldots)$
solves \eqref{full frag system} with $\mathring{u}=e_n$.
Since $u(t) \in \mathcal{D}(A^{(w)}) \subseteq \mathcal{D}(G^{(w)})$,
the function $u$ coincides with the unique solution of \eqref{GACP},
and hence $u(t)=S^{(w)}(t)e_n$, which yields
\begin{equation}\label{s_mn_u_m}
  s_{m,n}(t) = \begin{cases}
    u_m(t), \quad & m=1,2,\ldots,n, \\[0.5ex]
    0, & m>n.
  \end{cases}
\end{equation}
For $m=n$, the differential equation in \eqref{finite_system}
reduces to $u_n'(t)=-a_n u_n(t)$,
which implies that $s_{n,n}(t)=u_n(t)=e^{-a_nt}$.
Since $n$ was arbitrary, it follows that, for all $t \ge 0$,
\begin{equation}\label{S/Sw matrix}
  S^{(w)}(t)
  = \left[\begin{array}{c|ccc}
    e^{-a_1t} & s_{1,2}(t) & s_{1,3}(t) & \cdots \\[1ex]
    \hline
    0 & e^{-a_2t} & s_{2,3}(t) & \cdots \rule{0ex}{3ex} \\[1ex]
    0 & 0 & e^{-a_3t} & \cdots \\
    \vdots & \vdots & \vdots & \ddots
  \end{array}\right]
  = \begin{bmatrix}
      e^{-a_1t} & S^{(w)}_{(12)}(t) \\[2ex]
      \mathbf{0} & S^{(w)}_{(22)}(t)
    \end{bmatrix},
\end{equation}
where $\mathbf{0}$ is an infinite column vector consisting entirely of zeros,
$S^{(w)}_{(12)}(t)$ is a non-negative infinite row vector
and $S^{(w)}_{(22)}(t)$ is an infinite-dimensional, non-negative, upper triangular matrix.
We note that, in the particular case when $\ell_w^1 = X_{[1]}$ and  mass is conserved,
Banasiak obtains the infinite matrix representation \eqref{S/Sw matrix}
for the semigroup $(S_1(t))_{t \ge 0}$
in \cite[Equation~(10) and Lemma~1]{banasiak2011irregular}.
In \cite{banasiak2011irregular}, an explicit expression is also found
for $s_{m,n}(t)$, $m<n$, but we omit this here since it is not required
for the results that follow.
As observed in \cite[pp.~363]{banasiak2011irregular}, it follows from \eqref{S/Sw matrix} that,
for all $N \in \mathbb{N}$, we have $S(t)f \in \spn\{e_1, e_2, \ldots, e_N\}$
for all $f \in \spn\{e_1, e_2, \ldots, e_N\}$.
Also note that the functions $s_{m,n}$ are independent of the weight $w$,
which implies that, whenever $\widehat w$ is another weight
satisfying Assumption~\ref{assumption on weight for generation}, $S^{(w)}(t)$
and $S^{(\widehat w)}(t)$ coincide on $\ell_w^1 \cap \ell_{\widehat w}^1$.

\section{The pointwise fragmentation problem and the fragmentation generator}
\label{Pointwise Problem}

We established in Theorem~\ref{solution for IC in D(C)} that
if Assumptions~\ref{A1.1} and \ref{assumption on weight for generation} are satisfied,
then $u(t)=S^{(w)}(t)\mathring{u}$ is the unique, non-negative classical solution
of the fragmentation ACP \eqref{weighted frag ACP}
for all $\mathring{u} \in \mathcal{D}(C^{(w)})_+$.
Moreover, when \eqref{mass_conserved} holds, then this solution is  mass conserving.
Clearly, $u(t)=S^{(w)}(t)\mathring{u}$ will also satisfy
the fragmentation system \eqref{full frag system}
in a pointwise manner when $\mathring{u} \in \mathcal{D}(C^{(w)})_+$.
However, at this stage  we do not know in what sense, if any,
the semigroup $(S^{(w)}(t))_{t \geq 0}$ provides a non-negative solution
for a general $\mathring{u} \in (\ell_w^1)_+$.
In this section we show that a non-negative solution of the
pointwise system \eqref{full frag system} can be determined
for any given initial condition in $(\ell_w^1)_+$ by using the
semigroup $(S^{(w)}(t))_{t \ge 0}$.

As before, we require Assumptions~\ref{A1.1}
and \ref{assumption on weight for generation} to hold,
and we define a sequence $(c_n)_{n=1}^{\infty}$ by \eqref{maximal choice of cn},
with the associated  multiplication operator $C^{(w)}$ given by \eqref{C^w definition}.
Then $a_n \le c_n$ for all $n \in \mathbb{N}$ and it follows
that $\mathcal{D}(C^{(w)}) \subseteq \mathcal{D}(A^{(w)})$.
From Proposition~\ref{G=closure for frag}, $\mathcal{D}(C^{(w)})$ is invariant
under the substochastic semigroup
$(S^{(w)}(t))_{t \geq 0}$ generated by $G^{(w)}=\overline{A^{(w)}+B^{(w)}}$.
Consequently, $u(t)=S^{(w)}(t)\mathring{u}$ is the unique, non-negative
classical solution of \eqref{weighted frag ACP} for
each $\mathring{u} \in \mathcal{D}(C^{(w)})_+$, and therefore
\begin{equation}\label{integrated pointwise system}
  u_n(t)-\mathring{u}_n=-a_n\int_0^t u_n(s)\,\rd s
  + \int_0^t \sum_{j=n+1}^{\infty} a_j b_{n,j}u_j(s)\,\rd s,
\end{equation}
for $n=1,2,\ldots$.
We use this integrated version of the pointwise fragmentation system
\eqref{full frag system} to prove the following result.

\begin{theorem}\label{pointwise system satisfied for any IC}
Let Assumptions~\ref{A1.1} and \ref{assumption on weight for generation} hold, and
let $\mathring{u} \in \ell_w^1$.
Then $u(t) = S^{(w)}(t)\mathring{u}$ satisfies the system \eqref{full frag system}
for almost all $t \ge 0$.
Moreover, if $\mathring{u} \ge 0$, then $u(t) \ge 0$ for $t \ge 0$.
\end{theorem}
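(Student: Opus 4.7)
The plan is to establish the integrated version \eqref{integrated pointwise system} of \eqref{full frag system} for $u(t)=S^{(w)}(t)\mathring{u}$ by approximating $\mathring{u}$ with finitely supported vectors, and then to recover the differential form via Lebesgue's differentiation theorem. For each $N\in\mathbb{N}$, let $\mathring{u}^{(N)}$ denote the truncation with $\mathring{u}^{(N)}_n=\mathring{u}_n$ for $n\le N$ and $\mathring{u}^{(N)}_n=0$ otherwise. Because $\mathring{u}^{(N)}$ has finite support, it lies in $\mathcal{D}(C^{(w)})$, so Theorem~\ref{solution for IC in D(C)} ensures that $u^{(N)}(t)\coloneqq S^{(w)}(t)\mathring{u}^{(N)}$ is the unique classical solution of \eqref{weighted frag ACP} with initial datum $\mathring{u}^{(N)}$. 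Integrating the componentwise form of \eqref{weighted frag ACP} in $t$ then yields
\begin{equation*}
u_n^{(N)}(t)-\mathring{u}_n^{(N)}=-a_n\int_0^t u_n^{(N)}(s)\,\rd s+\int_0^t\sum_{j=n+1}^\infty a_jb_{n,j}u_j^{(N)}(s)\,\rd s
\end{equation*}
for every $n\in\mathbb{N}$ and $t\ge 0$.

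Since the substochastic semigroup $(S^{(w)}(t))_{t\ge 0}$ is contractive on the AL-space $\ell_w^1$, I have $\lVert u^{(N)}(t)-u(t)\rVert_w\le\lVert\mathring{u}^{(N)}-\mathring{u}\rVert_w\to 0$ as $N\to\infty$, so $u_n^{(N)}(t)\to u_n(t)$ uniformly in $t$ on any bounded interval. Convergence of the left-hand side of the above identity and of the term $a_n\int_0^t u_n^{(N)}(s)\,\rd s$ to their expected limits is then immediate.

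The main obstacle is to exchange the limit $N\to\infty$ with the infinite sum in $j$ and with the time integral in the remaining term. I would first handle the case $\mathring{u}\ge 0$: here $\mathring{u}^{(N)}$ is componentwise non-decreasing in $N$ with limit $\mathring{u}$, and positivity of $S^{(w)}(t)$ implies $u_j^{(N)}(s)\uparrow u_j(s)$ for every $j$ and $s$. Two successive applications of the monotone convergence theorem (first to the sum over $j$, then to the integral in $s$) give
\begin{equation*}
\int_0^t\sum_{j=n+1}^\infty a_jb_{n,j}u_j^{(N)}(s)\,\rd s\longrightarrow\int_0^t\sum_{j=n+1}^\infty a_jb_{n,j}u_j(s)\,\rd s
\end{equation*}
as an extended real number. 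Finiteness of all other terms in the integrated equation forces this limit to be finite, and hence \eqref{integrated pointwise system} holds for $u(t)$. The signed case follows by splitting $\mathring{u}=\mathring{u}_+-\mathring{u}_-$ with $\mathring{u}_\pm\in(\ell_w^1)_+$ and invoking linearity of $S^{(w)}(t)$; absolute convergence for almost every $s$ is inherited from the non-negative case applied to $\mathring{u}_\pm$.

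Once \eqref{integrated pointwise system} is established, the integrand on the right-hand side is locally integrable in $s$, so Lebesgue's differentiation theorem shows that $u_n$ is differentiable at almost every $t\ge 0$ with derivative given by \eqref{full frag system}. The assertion $u(t)\ge 0$ whenever $\mathring{u}\ge 0$ is then immediate from the positivity of $(S^{(w)}(t))_{t\ge 0}$.
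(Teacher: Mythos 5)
Your proposal is correct and follows essentially the same route as the paper's proof: truncation of $\mathring{u}$ to finitely supported data in $\mathcal{D}(C^{(w)})$, passage to the limit in the integrated equation via monotone convergence for non-negative initial data, reduction of the signed case by splitting $\mathring{u}=\mathring{u}_+-\mathring{u}_-$, and recovery of the differential form almost everywhere from absolute continuity. No substantive differences.
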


\begin{proof}
Let $\mathring{u} \in (\ell_w^1)_+$ and, for $N \in \mathbb{N}$,
define the operator $P_N : \ell_w^1 \to \ell_w^1$ by
\[
  P_Nf \coloneqq \sum_{n=1}^N f_n e_n
  = (f_1,f_2,\ldots,f_N,0,\ldots), \qquad f \in \ell_w^1.
\]
Then $P_N\mathring{u} \in \mathcal{D}(C^{(w)})_+$ for all $N \in \mathbb{N}$,
and so, on setting $u^{(N)}(t)=S^{(w)}(t)P_N\mathring{u}$, we have
\begin{equation}\label{truncated integral form}
  u^{(N)}_n(t) = P_N\mathring{u}_n - a_n\int_0^t u^{(N)}_n(s)\,\rd s
  + \int_0^t \sum_{j=n+1}^{\infty} a_j b_{n,j}u^{(N)}_j(s)\,\rd s,
\end{equation}
for $n = 1,2,\ldots,N$.  Clearly, $P_N\mathring{u} \to \mathring{u}$ in $\ell_w^1$
as $N \to \infty$, and so, by the continuity of $S^{(w)}(t)$, it follows
that $u^{(N)}_n(t) \to u_n(t)$ as $N \to \infty$ for all $n \in \mathbb{N}$ and $t \ge 0$.
Moreover, if $N_2 \ge N_1$ then $u^{(N_2)}(t)-u^{(N_1)}(t) \ge 0$ for all $t \ge 0$,
since $(S^{(w)}(t))_{t \geq 0}$ is linear and positive.
Similarly, $u(t)-u^{(N)}(t) \ge 0$ for all $N \in \mathbb{N}$ and $t \ge 0$.
Hence $(u^{(N)}(t))_{N=1}^{\infty}$ is monotone increasing and bounded above by $u(t)$,
and therefore, for each fixed $n \in \mathbb{N}$,  $(u^{(N)}_n(t))_{N=1}^{\infty}$
is monotone increasing and bounded above by $u_n(t)$.
On allowing $N \to \infty$ in \eqref{truncated integral form},
and using the monotone convergence theorem, we obtain
\[
  u_n(t) = \mathring{u}_n - a_n\int_0^t u_n(s)\,\rd s
  + \lim_{N \to \infty}\int_0^t \sum_{j=n+1}^{\infty} a_j b_{n,j}u^{(N)}_j(s)\,\rd s.
\]
From this, we deduce that
\[
  \lim\limits_{N \to \infty} \int\limits_0^t
  \sum_{j=n+1}^{\infty} a_j b_{n,j}u^{(N)}_j(s)\,\rd s
\]
exists, and a further application of the monotone convergence theorem shows that
\begin{align*}
  \lim\limits_{N \to \infty} \int\limits_0^t
  \sum\limits_{j=n+1}^{\infty} a_j b_{n,j}u^{(N)}_j(s)\,\rd s
  &= \int\limits_0^t \sum\limits_{j=n+1}^{\infty} a_jb_{n,j}u_j(s)\,\rd s.
\end{align*}
Thus, for all $\mathring{u} \in (\ell_w^1)_+$,
\begin{equation}\label{integrated pointwise equation}
  (S^{(w)}(t)\mathring{u})_n
  = \mathring{u}_n + \int_0^t \biggl(-a_n(S^{(w)}(s)\mathring{u})_n
  + \sum_{j=n+1}^{\infty}a_jb_{n,j}(S^{(w)}(s)\mathring{u})_j \biggr)\,\rd s.
\end{equation}
It follows that $(S^{(w)}(t)\mathring{u})_n$ is absolutely continuous
with respect to $t$ for each $n=1,2,\ldots$ and so
\begin{equation}\label{Final pointwise equation}
  \frac{\rd}{\rd t}\bigl(S^{(w)}(t)\mathring{u}\bigr)_n
  = -a_n\bigl(S^{(w)}(t)\mathring{u}\bigr)_n
  + \sum_{j=n+1}^{\infty} a_j b_{n,j}(S^{(w)}(t)\mathring{u})_j, \qquad
  n \in \mathbb{N},
\end{equation}
for all $\mathring{u} \in (\ell_w^1)_+$ and almost every $t \ge 0$.

When $\mathring{u}$ is a general, and therefore not necessarily non-negative, sequence
in $\ell_w^1$, we can express $\mathring{u}=\mathring{u}_+-\mathring{u}_- \in \ell_w^1$.
It then follows immediately from the first part of the proof
that $u(t)=S^{(w)}(t)\mathring{u}$ also satisfies \eqref{full frag system}
for almost all $ t \ge 0$.

The last statement of the theorem follows immediately from the positivity
of the semigroup $(S^{(w)}(t))_{t\ge0}$.
\end{proof}

Note that, in general, solutions of \eqref{full frag system} are not unique;
see the discussion in Example~\ref{example: random scission} below.

We now turn our attention to obtaining a simple representation
of the generator $G^{(w)}$.
Although we know that $G^{(w)}$ coincides with $A^{(w)}+B^{(w)}$
on $\mathcal{D}(A^{(w)})$, and  also that $u(t)=S^{(w)}(t)\mathring{u}$
is the unique classical solution of \eqref{GACP}
for $\mathring{u} \in \mathcal{D}(G^{(w)})$, we have yet to ascertain an
explicit expression that describes the action of $G^{(w)}$ on $\mathcal{D}(G^{(w)})$.
This matter is resolved by the following theorem,
which shows that $G^{(w)}$ is a restriction of the maximal operator, $\Gmaxw$,
defined in \eqref{definition_Gmax}.
In the specific case of $X_{[p]}$, the result has been obtained from \cite[Theorem~6.20]{banasiak2006perturbations},
which uses extension techniques first introduced by Arlotti in \cite{arlotti1991},
 and which is applied in \cite[Theorem~2.1]{banasiak2012global}.
We present an alternative proof, which avoids the use of such extensions.

\begin{theorem}
Let Assumptions~\ref{A1.1} and \ref{assumption on weight for generation} hold.
Then, for all $g \in \mathcal{D}(G^{(w)})$, we have
\begin{equation}\label{action of G}
  \bigl[G^{(w)}g\bigr]_n = -a_ng_n+\sum\limits_{j=n+1}^{\infty} a_j b_{n,j}g_j,
  \qquad n \in \mathbb{N}.
\end{equation}
\end{theorem}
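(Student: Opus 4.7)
The plan is to identify the action of $G^{(w)}$ by combining the integrated pointwise equation of Theorem~\ref{pointwise system satisfied for any IC} with the Laplace representation of the resolvent. Fix $\lambda$ larger than the growth bound of $(S^{(w)}(t))_{t\ge 0}$, so that $R(\lambda,G^{(w)})\colon \ell_w^1 \to \mathcal{D}(G^{(w)})$ is a bijection. Every $g\in\mathcal{D}(G^{(w)})$ then takes the form $g=R(\lambda,G^{(w)})f$ with $f=(\lambda I-G^{(w)})g\in\ell_w^1$, and by writing $f=f_+ - f_-$ together with positivity of the resolvent it suffices to treat the case $g=R(\lambda,G^{(w)})f$ with $f\in(\ell_w^1)_+$; the general result will follow by linearity.

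For such $g$, the Bochner integral representation
\[
  g = \int_0^\infty e^{-\lambda t} S^{(w)}(t) f\,\rd t
\]
gives, on passing to the $n$-th coordinate, $g_n=\int_0^\infty e^{-\lambda t} (S^{(w)}(t)f)_n\,\rd t$. I would apply Theorem~\ref{pointwise system satisfied for any IC} with initial datum $f\ge 0$ to obtain \eqref{integrated pointwise equation}, then multiply through by $e^{-\lambda t}$ and integrate over $t\in[0,\infty)$. Because every integrand in the resulting double integrals is non-negative, Tonelli's theorem legitimises both the interchange of the order of integration and the swap of the infinite $j$-sum with the $t$-integral, and the right-hand side reduces to $f_n/\lambda - a_n g_n/\lambda + \lambda^{-1}\sum_{j=n+1}^{\infty} a_j b_{n,j} g_j$. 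Multiplying by $\lambda$ and rearranging, while recalling that $\lambda g - f = G^{(w)}g$, yields
\[
  (G^{(w)} g)_n = -a_n g_n + \sum_{j=n+1}^{\infty} a_j b_{n,j} g_j,
\]
with the series automatically convergent because its value equals the finite quantity $(G^{(w)} g)_n + a_n g_n$.

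To finish, for arbitrary $g\in\mathcal{D}(G^{(w)})$ I would set $g_{\pm}\coloneqq R(\lambda, G^{(w)}) f_{\pm}\in\mathcal{D}(G^{(w)})_+$, so that $g=g_+ - g_-$, and apply the preceding formula to $g_+$ and $g_-$ separately. The bound $|g_j|\le(g_+)_j+(g_-)_j$ together with the convergence of $\sum_{j>n} a_j b_{n,j}(g_{\pm})_j$ then guarantees absolute convergence of $\sum_{j>n} a_j b_{n,j} g_j$, and linearity of $G^{(w)}$ delivers \eqref{action of G}. The one step requiring attention is the Tonelli interchange of the $t$-integral with the series in $j$; however, the non-negativity of $a_j$, $b_{n,j}$ and $(S^{(w)}(s)f)_j$ when $f\ge 0$ removes the need for any additional integrability hypothesis, so this step is essentially automatic and no estimate of $\|A^{(w)}g^{(k)}\|_w$ along an approximating sequence is required.
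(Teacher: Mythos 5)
Your proposal is correct and follows essentially the same route as the paper's proof: reduce to $g=R(\lambda,G^{(w)})f$ with $f\in(\ell_w^1)_+$ via the resolvent-based positive decomposition (the paper invokes Lemma~\ref{splitting D(G) into difference of two positives} and takes $\lambda=1$), apply the integrated pointwise equation \eqref{integrated pointwise equation} to $S^{(w)}(t)f$, take the Laplace transform, and use Tonelli/monotone convergence on the separately non-negative double integrals before concluding by linearity. The only differences are cosmetic (general $\lambda$ above the growth bound instead of $\lambda=1$, and your explicit remark on the absolute convergence of the series for general $g$).
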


\begin{proof}
It follows from Lemma~\ref{splitting D(G) into difference of two positives}
and its proof that, for every $g\in\mathcal{D}(G^{(w)})$,
there exist $g_1,g_2\in\mathcal{D}(G^{(w)})_+$ such that $g=g_1-g_2$
and $f_j\coloneqq(I-G^{(w)})g_j \in (\ell_w^1)_+$ for $j=1,2$.
This and the linearity of $G^{(w)}$ allow us to assume that $g\in\mathcal{D}(G^{(w)})_+$
such that $f\coloneqq(I-G^{(w)})g\in(\ell_w^1)_+$.
Defining $u(t)=S^{(w)}(t)f$, we have from \eqref{integrated pointwise equation} that
\begin{align*}
  & \bigl[R(1,G^{(w)})f\bigr]_n
  = \int\limits_0^{\infty} e^{-t}[S^{(w)}(t)f]_n\,\rd t
  = \int\limits_0^{\infty} e^{-t}u_n(t)\,\rd t \\
  &= f_n -\int\limits_0^{\infty} \int\limits_0^t e^{-t}a_nu_n(s)\,\rd s\,\rd t
  + \int\limits_0^{\infty}\int\limits_0^t \sum\limits_{j=n+1}^{\infty}
  e^{-t}a_jb_{n,j}u_j(s)\,\rd s\,\rd t.
\end{align*}
By Tonelli's theorem, we have
\begin{align*}
  \int\limits_0^{\infty} \int\limits_0^t e^{-t} a_nu_n(s)\,\rd s\,\rd t
  &= \int\limits_0^{\infty} \int\limits_s^{\infty} e^{-t} a_nu_n(s)\,\rd t\,\rd s \\
  &= a_n \int\limits_0^{\infty} e^{-s}u_n(s)\,\rd s
  =a_n\bigl[R(1,G^{(w)})f\bigr]_n.
\end{align*}
Using Tonelli's theorem and the monotone convergence theorem we obtain
\begin{align*}
  \int\limits_0^{\infty} e^{-t} \int\limits_0^t
  \sum\limits_{j=n+1}^{\infty} a_j b_{n,j}u_j(s)\,\rd s\,\rd t
  = \sum\limits_{j=n+1}^{\infty} a_j b_{n,j}\bigl[R(1,G^{(w)})f\bigr]_j.
\end{align*}
Thus
\begin{align*}
  g_n &= \bigl[R(1,G^{(w)})f\bigr]_n
  = f_n-a_n\bigl[R(1,G^{(w)})f\bigr]_n
  + \sum\limits_{j=n+1}^{\infty} a_j b_{n,j}\bigl[R(1,G^{(w)})f\bigr]_j \\
  &= \bigl[(I-G^{(w)})g\bigr]_n-a_n\bigl[R(1,G^{(w)})f\bigr]_n
  + \sum\limits_{j=n+1}^{\infty} a_j b_{n,j}\bigl[R(1,G^{(w)})f\bigr]_j \\
  &= g_n - \bigl[G^{(w)}g\bigr]_n - a_ng_n + \sum_{j=n+1}^\infty a_j b_{n,j}g_j,
\end{align*}
and \eqref{action of G} follows.
\end{proof}

We note that the formula \eqref{action of G} is independent of the weight $w=(w_n)_{n=1}^{\infty}$.  Being able to express the action of $G^{(w)}$ in this way is important
when investigating the full coagulation--fragmentation system, as it enables
the fragmentation terms to be described by means of an explicit formula
for the operator $G^{(w)}$.  We shall return to this in a subsequent paper.

\begin{example}\label{example: random scission}
Let us consider the system
\begin{equation}\label{system random scission}
\begin{split}
  & u_n'(t) = -(n-1)u_n(t) + 2\sum_{j=n+1}^\infty u_j(t), \qquad t>0; \\
  & u_n(0) = \mathring{u}_n, \qquad n=1,2,\ldots,
\end{split}
\end{equation}
which coincides with \eqref{full frag system} if one sets
\begin{equation}\label{anbnj random scission}
  a_n = n-1, \quad b_{n,j} = \frac{2}{j-1}\,,
  \qquad n,j \in \mathbb{N}, \quad j > n.
\end{equation}
The system \eqref{system random scission} models random scission;
see, e.g.\ \cite[equation (49)]{ziffmcgrady1985kinetics}
and \cite[equation (10)]{caiedwardshan1991}.
It is easily seen that \eqref{mass_conserved} is satisfied,
and hence, mass is conserved.
The example \eqref{system random scission} is closely related to the example
that is studied in \cite[\S3]{smith2012discrete} and which models
random bond annihilation.
More precisely, if we denote the operators for the example from \cite{smith2012discrete}
by $\widetilde{A}^{(w)}$, $\widetilde{B}^{(w)}$, $\widetilde{G}^{(w)}$ etc., then
\[
  A^{(w)} = \widetilde{A}^{(w)}+I, \quad B^{(w)} = \widetilde{B}^{(w)}, \quad
  G^{(w)} = \widetilde{G}^{(w)}+I,
\]
and hence $S^{(w)}(t) = e^t\widetilde{S}^{(w)}(t)$, $t\ge0$.
For the particular case when $w_n=n$, $n \in \mathbb{N}$, we have
similar relations for the operators $A_1$, $\widetilde{A}_1$ etc.
It follows from \cite[Lemma~3.6]{smith2012discrete} that every $\lambda > 0$
is an eigenvalue of the maximal operator $\Gmaxone$
(i.e.\ the operator $\Gmaxw$ defined in \eqref{definition_Gmax} for $w_n=n$)
with eigenvector $g^{(\lambda)}=(g_n^{(\lambda)})_{n\in\mathbb{N}}$
where
\begin{equation}\label{eigenvector_Gmax}
  g_n^{(\lambda)} = \frac{1}{(\lambda+n-1)(\lambda+n)(\lambda+n+1)}\,,
  \qquad n \in \mathbb{N}.
\end{equation}
The existence of positive eigenvalues of $\Gmaxone$ implies that $\Gmaxone$
is a proper extension of $G_1$.
Note that the domain of $\widetilde G_1$ is determined explicitly
in \cite[Theorem~3.7]{smith2012discrete}, from which we obtain that
\begin{equation}\label{domain G1}
  \mathcal{D}(G_1) = \biggl\{f=(f_k)_{k\in\mathbb{N}} \in \mathcal{D}(\Gmaxone):
  \lim_{n\to\infty} \biggl(n^2\sum_{k=n+1}^\infty f_k\biggr) = 0\biggr\}.
\end{equation}
Using the eigenvectors $g^{(\lambda)}$ from \eqref{eigenvector_Gmax}
we can define the function
\[
  u^{(\lambda)}(t) \coloneqq e^{\lambda t}g^{(\lambda)}, \qquad t \ge 0,
\]
which is a solution of the ACP
\begin{equation}\label{ACP_Gonemax}
  u'(t) = \Gmaxone u(t), \quad t>0; \qquad u(0) = \mathring{u}
\end{equation}
with $\mathring{u}=g^{(\lambda)}$.
On the other hand, since the semigroup $(S_1(t))_{t\ge0}$ is analytic
by \cite[Theorem~3.4]{smith2012discrete},
the function $u(t)=S_1(t)g^{(\lambda)}$, $t\ge0$,
is also a solution of \eqref{ACP_Gonemax} and is distinct from $u^{(\lambda)}$.
This shows that, in general, one does not have uniqueness of solutions
of the ACP, \eqref{ACP_Gonemax}, corresponding to the maximal operator, $\Gmaxone$,
and hence, also solutions of \eqref{full frag system} are not unique.
\end{example}

More generally, a specific characterisation of $\mathcal{D}(G^{(w)})$ is given
by \cite[Theorem~6.20]{banasiak2006perturbations}, but this does not lead to
an explicit description, such as that obtained in Example~\ref{example: random scission}.

\section{Analyticity of the fragmentation semigroup}
\label{Analyticity of the Fragmentation Semigroup}

In Section~\ref{The Generator of the Fragmentation Semigroup} we established
that Assumptions~\ref{A1.1} and \ref{assumption on weight for generation}
are sufficient conditions for $G^{(w)}=\overline{A^{(w)}+B^{(w)}}$ to be the
generator of a substochastic $C_0$-semigroup, $(S^{(w)}(t))_{t \ge 0}$, on $\ell_w^1$.
This enabled us to obtain results on the existence and uniqueness of solutions
to \eqref{weighted frag ACP}.  We now investigate the analyticity
of $(S^{(w)}(t))_{t \geq 0}$ and prove that, given \emph{any} fragmentation coefficients,
it is always possible to construct a weight, $w$, such that $A^{(w)}+B^{(w)}$
is the generator of an analytic, substochastic $C_0$-semigroup on $\ell_w^1$.
This particular result, which is one of the  main motivations for carrying out
an analysis of the fragmentation system in general weighted $\ell^1$ spaces,
requires a stronger assumption on the weight $w$.
Note that when dealing with analytic semigroups, we use complex versions
of the spaces $\ell_w^1$.

\begin{assumption}\label{assumption for analyticity}
\rule{0ex}{1ex}
\begin{myenum}
\item 
$w_n \ge n$ \, for all $n \in \mathbb{N}$.
\item 
There exists $\kappa \in (0,1)$ such that
\begin{equation}\label{inequ_for_analyticity}
  \sum\limits_{n=1}^{j-1} w_nb_{n,j} \leq \kappa w_j \qquad \text{for all} \ j=2,3,\ldots.
\end{equation}
\end{myenum}
\end{assumption}

Note that Assumption~\ref{assumption for analyticity} is obtained
from Assumption~\ref{assumption on weight for generation} by simply
replacing $\kappa \in (0,1]$ with $\kappa \in (0,1)$.
By removing the possibility of $\kappa = 1$, we can obtain the following improved
version of Theorem \ref{G=closure for frag}.

\begin{theorem}\label{thm for analyticity}
Let Assumptions~\ref{A1.1} and \ref{assumption for analyticity} hold.
Then the operator $G^{(w)}=A^{(w)}+B^{(w)}$ is the generator of an analytic,
substochastic $C_0$-semigroup, $(S^{(w)}(t))_{t \ge 0}$, on $\ell_w^1$.
\end{theorem}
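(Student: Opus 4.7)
The plan is to exploit the strict inequality $\kappa<1$ (as opposed to $\kappa\le 1$ in Assumption~\ref{assumption on weight for generation}) to upgrade the mere generation result of Theorem~\ref{G=closure for frag} to analyticity via a direct resolvent-perturbation argument. First, I would observe that $A^{(w)}$, being the multiplication operator $f\mapsto(-a_nf_n)_{n=1}^{\infty}$ on $\ell_w^1$, has spectrum contained in $(-\infty,0]$, with resolvent acting as multiplication by the symbol $1/(\lambda+a_n)$. A direct computation then yields $\|R(\lambda,A^{(w)})\|\le 1/\mathrm{dist}(\lambda,(-\infty,0])$ for every $\lambda\in\mathbb{C}\setminus(-\infty,0]$, which identifies $A^{(w)}$ as sectorial of angle $\pi$ and hence as the generator of an analytic $C_0$-semigroup of maximal angle $\pi/2$ on the complexification of $\ell_w^1$.

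Next, because the matrix entries $a_jb_{n,j}$ of $B^{(w)}$ are non-negative, the pointwise bound $|[B^{(w)}f]_n|\le[B^{(w)}|f|]_n$ together with $\|A^{(w)}f\|_w=\|A^{(w)}|f|\|_w$ shows that the chain \eqref{B bounded by A} extends verbatim to complex-valued $f\in\mathcal{D}(A^{(w)})$, producing $\|B^{(w)}f\|_w\le\kappa\|A^{(w)}f\|_w$ with $\kappa<1$. For $\lambda$ in the open sector $\Sigma_\theta\coloneqq\{\mu\in\mathbb{C}\setminus\{0\}:|\arg\mu|<\theta\}$ I would then bound
\[
  \|B^{(w)}R(\lambda,A^{(w)})\|
  \le\kappa\,\|A^{(w)}R(\lambda,A^{(w)})\|
  =\kappa\sup_{n\in\mathbb{N}}\Bigl|\tfrac{a_n}{\lambda+a_n}\Bigr|.
\]
An elementary geometric argument shows that $\sup_{a\ge 0}|a/(\lambda+a)|$ equals $1$ when $|\arg\lambda|\le\pi/2$ and $1/|\sin(\arg\lambda)|$ when $\pi/2<|\arg\lambda|<\pi$; since $\arcsin\kappa<\pi/2$, I would pick any $\theta\in(\pi/2,\pi-\arcsin\kappa)$, so that $\kappa\sup_n|a_n/(\lambda+a_n)|\le q_\theta<1$ uniformly for $\lambda\in\Sigma_\theta$.

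The Neumann-series factorisation $\lambda I-(A^{(w)}+B^{(w)})=(I-B^{(w)}R(\lambda,A^{(w)}))(\lambda I-A^{(w)})$ then gives $\Sigma_\theta\subseteq\rho(A^{(w)}+B^{(w)})$ together with the sectorial bound $\|R(\lambda,A^{(w)}+B^{(w)})\|\le M_\theta/|\lambda|$ throughout $\Sigma_\theta$. Since $\theta>\pi/2$, the standard characterisation of generators of analytic semigroups \cite[Theorem~\Romannum{2}.4.6]{engel1999one} identifies $A^{(w)}+B^{(w)}$ as the generator of an analytic $C_0$-semigroup on $\ell_w^1$. For substochasticity I would observe that Assumption~\ref{assumption for analyticity} implies Assumption~\ref{assumption on weight for generation}, so Theorem~\ref{G=closure for frag} already furnishes a substochastic $C_0$-semigroup whose generator is $\overline{A^{(w)}+B^{(w)}}$; because $A^{(w)}+B^{(w)}$ is now known to generate a $C_0$-semigroup it is in particular closed, so it coincides with its closure, and Lemma~\ref{uniqueness of extension generator} confirms that the two semigroups are one and the same.

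I expect the main obstacle to be the precise resolvent estimate on a sector strictly larger than the right half-plane: the geometric quantity $\sup_{a\ge 0}|a/(\lambda+a)|$ blows up as $\arg\lambda\to\pm\pi$, so the Neumann series can be made convergent only inside some sector of half-angle $\pi-\arcsin\kappa$. This is exactly where the strict inequality $\kappa<1$ is indispensable, and it is what distinguishes Assumption~\ref{assumption for analyticity} from Assumption~\ref{assumption on weight for generation}, explaining why Theorem~\ref{G=closure for frag} alone cannot deliver analyticity.
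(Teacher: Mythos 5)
Your proof is correct, but it takes a genuinely different route from the one in the paper. The paper first invokes the Miyadera-type perturbation theorem of Thieme and Voigt \cite[Theorem~A.2]{thieme2006stochastic}, fed by the integral estimate $\int_0^\alpha\Vert B^{(w)}T^{(w)}(t)f\Vert_w\,\rd t\le\kappa\Vert f\Vert_w$ for $f\in\mathcal{D}(A^{(w)})_+$, to conclude that the unclosed sum $A^{(w)}+B^{(w)}$ generates a substochastic $C_0$-semigroup; it then obtains analyticity of the diagonal semigroup from $\Vert R(\lambda,A^{(w)})\Vert\le1/|\Im\lambda|$ and transfers it to $(S^{(w)}(t))_{t\ge0}$ via the positivity-based perturbation theorem of Arendt and Rhandi \cite{arendtrhandi1991perturbation}, using that $A^{(w)}+B^{(w)}$ is resolvent positive. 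You instead establish sectoriality of $A^{(w)}+B^{(w)}$ directly by a Neumann series on a sector $\Sigma_\theta$ with $\theta\in(\pi/2,\pi-\arcsin\kappa)$; the reason the generic relative bound $\kappa<1$ suffices here --- in general a relative bound below some unspecified constant is needed to perturb an analytic semigroup --- is the exact identity $\Vert A^{(w)}R(\lambda,A^{(w)})\Vert=\sup_n|a_n/(\lambda+a_n)|$, available because $A^{(w)}$ is a multiplication operator with non-positive symbol, and your computation $\sup_{a\ge0}|a/(\lambda+a)|=1$ for $|\arg\lambda|\le\pi/2$ and $=1/|\sin(\arg\lambda)|$ beyond is correct. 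Your route is self-contained (no Thieme--Voigt, no Arendt--Rhandi), needs no positivity at all for the analyticity part, and yields an explicit sector, hence an explicit angle of analyticity of at least $\pi/2-\arcsin\kappa$ and explicit resolvent constants, which the paper's argument does not provide; the price is that generation by the unclosed sum and substochasticity must be recovered afterwards, which you do correctly: the Neumann factorisation shows $\lambda I-(A^{(w)}+B^{(w)})$ is boundedly invertible, so the sum is closed and therefore coincides with the generator $\overline{A^{(w)}+B^{(w)}}$ of the substochastic semigroup supplied by Theorem~\ref{G=closure for frag} (Assumption~\ref{assumption for analyticity} trivially implies Assumption~\ref{assumption on weight for generation}), whence the two semigroups are identical. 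The paper's route gets generation and substochasticity in one stroke and only then upgrades to analyticity; yours gets analyticity first and borrows substochasticity afterwards.
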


\begin{proof}
Let $(T^{(w)}(t))_{t\ge0}$ be as in Lemma~\ref{A is a generator}.
For $\alpha > 0$ and $f \in \mathcal{D}(A^{(w)})_+$, we obtain
from \eqref{B bounded by A} that
\begin{align*}
  \int\limits_0^{\alpha} \big\Vert B^{(w)}T^{(w)}(t)f \big\Vert\,\rd t
  &\le \kappa \int\limits_0^{\alpha} \big\Vert A^{(w)}T^{(w)}(t)f \big\Vert_{w}\,\rd t \\
  &= \kappa\int\limits_0^{\alpha} \phi_w\bigl(-A^{(w)}T^{(w)}(t)f\bigr)\,\rd t
  = \kappa \phi_w\biggl(-\int\limits_0^{\alpha} A^{(w)}T^{(w)}(t)f \, \rd t \biggr) \\
  &= \kappa \phi_w\biggl(-\int\limits_0^{\alpha} \frac{\rd}{\rd t}\bigl(T^{(w)}(t)f\bigr)\,\rd t\biggr)
  = \kappa \phi_w\bigl(f-T^{(w)}(\alpha)f\bigr) \\
  &= \kappa \Vert f \Vert_w-\kappa \Vert T^{(w)}(\alpha)f \Vert_w
  \le \kappa \Vert f \Vert_w.
\end{align*}
Since $\kappa<1$, it follows from \cite[Theorem~A.2]{thieme2006stochastic} that $G^{(w)}=A^{(w)}+B^{(w)}$ is the generator of a positive $C_0$-semigroup.  The proof of \cite[Theorem~A.2]{thieme2006stochastic}, establishes that this semigroup is substochastic since $\kappa<1$.
Moreover, by Lemma~\ref{A is a generator}, $A^{(w)}$ is also the generator
of a substochastic $C_0$-semigroup, $(T^{(w)}(t))_{t \ge 0}$, on $\ell_w^1$,
and a routine calculation shows that
\[
  \bigl\Vert R(\lambda,A^{(w)})f \bigr\Vert_w
  = \sum\limits_{n=1}^{\infty} w_n\frac{1}{|\lambda+a_n|}|f_n|
  \le \frac{1}{|\Im\lambda|}\Vert f \Vert_w,
  \qquad \lambda\in\mathbb{C}\setminus\mathbb{R} \ \text{with} \ \Re\lambda > 0,
\]
for all $f\in\ell_w^1$.
Therefore, by \cite[Theorem~\Romannum{2}.4.6]{engel1999one}, $(T^{(w)}(t))_{t \ge 0}$
is an analytic semigroup.
Also, the positivity of $(S^{(w)}(t))_{t \ge 0}$ implies
that $A^{(w)}+B^{(w)}$ is resolvent positive.
Hence, by \cite[Theorem~1.1]{arendtrhandi1991perturbation}, $(S^{(w)}(t))_{t \ge 0}$
is analytic.
\end{proof}

\begin{remark}\label{can always find weight remark}
\rule{0ex}{1ex}
\begin{myenum}
\item 
Although Assumption~\ref{assumption for analyticity} is never satisfied
when \eqref{mass_conserved} holds and $w_n=n$ for all $n \in \mathbb{N}$,
this does not rule out the possibility of an analytic fragmentation semigroup
on $X_{[1]}$ existing.
Indeed, the semigroup $(S_1(t))_{t \ge 0}$ in Example~\ref{example: random scission}
is analytic, which follows from \cite[Theorem~3.4]{smith2012discrete}
as mentioned above.
\item 
If there exists $\lambda_0>0$ such that \eqref{local mass conservation lambda} holds
with $\lambda_j \ge \lambda_0$ for all $j \ge 2$ (which corresponds
to a `uniform' mass loss case), then Assumption~\ref{assumption for analyticity}
immediately holds with $w_n=n$ for all $n \in \mathbb{N}$, and $\kappa=1-\lambda_0$.
\end{myenum}
\end{remark}

\noindent
The following lemma gives sufficient conditions under which
Assumption~\ref{assumption for analyticity} holds.

\begin{lemma}\label{increasing weight condition for analyticity}
Let $w$ be such that
\begin{equation}\label{suff_cond_analyticity}
  1 \le  \frac{w_n}{n} \le \delta\frac{w_{n+1}}{n+1}
  \qquad \text{for all} \ n \in \mathbb{N},
\end{equation}
where $\delta \in (0,1)$.
Moreover, let \eqref{local_mass_non_increasing} hold.
Then Assumption~\ref{assumption for analyticity} is satisfied with $\kappa=\delta$.
\end{lemma}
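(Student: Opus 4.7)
The plan is to verify the two parts of Assumption~\ref{assumption for analyticity} directly from the hypotheses. Part~(i), namely $w_n\ge n$, is immediate from the left half of \eqref{suff_cond_analyticity}, so the real work is the moment-type bound in part~(ii) with $\kappa=\delta$.

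The central observation is that the right half of \eqref{suff_cond_analyticity} is a multiplicative recursion for the ratios $w_n/n$. Iterating it from index $n$ up to index $j$ yields
\[
  \frac{w_n}{n} \le \delta^{j-n}\frac{w_j}{j} \qquad \text{for all} \ 1 \le n \le j,
\]
and in particular $w_n \le \delta^{j-n}\,\frac{n}{j}\,w_j$. I would then rewrite the sum of interest as $\sum_{n=1}^{j-1} w_n b_{n,j} = \sum_{n=1}^{j-1} \bigl(w_n/n\bigr)\, n\, b_{n,j}$ and insert this estimate to obtain
\[
  \sum_{n=1}^{j-1} w_n b_{n,j}
  \le \frac{w_j}{j}\sum_{n=1}^{j-1} \delta^{j-n}\, n\, b_{n,j}.
\]

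Now comes the only step where the strict inequality $\delta<1$ is used: for every $n$ in the sum we have $n\le j-1$, hence $j-n\ge 1$ and $\delta^{j-n}\le\delta$. Pulling this factor outside and applying \eqref{local_mass_non_increasing} finishes the bound:
\[
  \sum_{n=1}^{j-1} w_n b_{n,j}
  \le \delta\,\frac{w_j}{j}\sum_{n=1}^{j-1} n\, b_{n,j}
  \le \delta\,\frac{w_j}{j}\cdot j
  = \delta\, w_j,
\]
which is precisely \eqref{inequ_for_analyticity} with $\kappa=\delta\in(0,1)$.

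There is no genuine obstacle here; the mild subtlety is recognising that one must absorb one factor of $\delta$ (not use it as an equality) in order to land in the $\kappa<1$ regime required by Assumption~\ref{assumption for analyticity}, and that this is exactly what the gap between $n\le j-1$ and $n\le j$ provides. The argument parallels Remark~\ref{remark_w_n/n_increasing}, but the strict contraction factor $\delta$ in the hypothesis is what upgrades the conclusion from $\kappa=1$ to $\kappa=\delta<1$, thereby feeding Theorem~\ref{thm for analyticity}.
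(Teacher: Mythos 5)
Your proof is correct and follows exactly the paper's argument: iterating \eqref{suff_cond_analyticity} to get $w_n/n \le \delta^{j-n} w_j/j \le \delta\, w_j/j$ for $n \le j-1$, then applying \eqref{local_mass_non_increasing}. No differences worth noting.
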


\begin{proof}
Since
\[
  \frac{w_n}{n} \le \delta^{j-n} \frac{w_j}{j} \le \delta\frac{w_j}{j}
  \qquad \text{for all} \ n=1,\ldots,j-1,
\]
it follows that
\[
  \sum\limits_{n=1}^{j-1} w_nb_{n,j} = \sum\limits_{n=1}^{j-1} \frac{w_n}{n}nb_{n,j}
  \le \delta \frac{w_j}{j} \sum\limits_{n=1}^{j-1} nb_{n,j} \le \delta w_j
\]
for $j=2,3,\ldots$, where \eqref{local_mass_non_increasing} is used to obtain
the last inequality.  Since $\delta \in (0,1)$, the result follows immediately.
\end{proof}

This leads to the main result of this section.

\begin{theorem}\label{can always find analytic semigroup}
For any given fragmentation coefficients for which Assumption~\ref{A1.1} holds
we can always find a weight, $w=(w_n)_{n=1}^{\infty}$, such that $A^{(w)}+B^{(w)}$
is the generator of an analytic, substochastic $C_0$-semigroup on $\ell_w^1$.
If, in addition, \eqref{local_mass_non_increasing} holds,
we can choose $w_n=r^n$ with arbitrary $r>2$ and $\kappa=2/r$
so that \eqref{inequ_for_analyticity} holds.
\end{theorem}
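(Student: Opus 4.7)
The plan is to reduce the claim to Theorem~\ref{thm for analyticity}, so the entire task is to exhibit, for any fragmentation coefficients satisfying Assumption~\ref{A1.1}, a weight $w=(w_n)_{n=1}^\infty$ that meets Assumption~\ref{assumption for analyticity}, i.e.\ $w_n\ge n$ for all $n$ and
\[
  \sum_{n=1}^{j-1} w_n b_{n,j} \le \kappa w_j \qquad \text{for all } j\ge 2,
\]
for some $\kappa\in(0,1)$. Once such a weight is produced, Theorem~\ref{thm for analyticity} delivers the analytic, substochastic $C_0$-semigroup immediately.

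For the general existence statement, I would fix any $\kappa\in(0,1)$ (e.g.\ $\kappa=\tfrac12$) and define $w$ recursively: set $w_1 = 1$ and, for $j\ge 2$,
\[
  w_j \coloneqq \max\!\Biggl(j,\; \kappa^{-1}\sum_{n=1}^{j-1} w_n b_{n,j}\Biggr).
\]
Because $b_{n,j}=0$ whenever $n\ge j$ by Assumption~\ref{A1.1}\,(ii), the sum is finite and the recursion is well posed, producing a positive sequence. By construction $w_n\ge n$ and $\sum_{n=1}^{j-1} w_n b_{n,j}\le \kappa w_j$ for every $j\ge 2$, so Assumption~\ref{assumption for analyticity} holds and the first claim follows.

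For the second claim, assuming \eqref{local_mass_non_increasing}, I would check directly that $w_n=r^n$ with $r>2$ and $\kappa=2/r\in(0,1)$ satisfies Assumption~\ref{assumption for analyticity}. The bound $w_n\ge n$ is immediate since $r^n\ge 2^n\ge n$. The key observation is that the ratio $r^n/n$ is non-decreasing in $n$ when $r\ge 2$, because $r^{n+1}/(n+1)\ge r^n/n$ is equivalent to $r\ge (n+1)/n$, and this is tightest at $n=1$ where it reduces to $r\ge 2$. Using this monotonicity together with \eqref{local_mass_non_increasing}, for $j\ge 2$,
\[
  \sum_{n=1}^{j-1} r^n b_{n,j}
  = \sum_{n=1}^{j-1} \frac{r^n}{n}\, n b_{n,j}
  \le \frac{r^{j-1}}{j-1}\sum_{n=1}^{j-1} n b_{n,j}
  \le \frac{j}{j-1}\, r^{j-1}
  \le 2 r^{j-1}
  = \frac{2}{r}\, r^j,
\]
where the final inequality uses $j/(j-1)\le 2$ for $j\ge 2$. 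This confirms \eqref{inequ_for_analyticity} with $\kappa=2/r$.

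The genuine obstacle here is conceptual rather than computational: one must accept that no \emph{a priori} bound on the $b_{n,j}$ is available in the general case, so the weight cannot in general be prescribed by a closed-form formula depending only on $n$. The recursive construction sidesteps this by letting each $w_j$ adapt to the already-chosen $w_1,\dots,w_{j-1}$ and to the particular fragmentation coefficients present at level $j$. The rest of the argument, including the verification that $w_n=r^n$ works when \eqref{local_mass_non_increasing} holds, is routine once the monotonicity of $r^n/n$ for $r\ge 2$ is noted.
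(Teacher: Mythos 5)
Your proposal is correct and follows essentially the same route as the paper: the paper also constructs the weight iteratively for the general case (your explicit recursion $w_j=\max(j,\kappa^{-1}\sum_{n=1}^{j-1}w_nb_{n,j})$ just spells out what the paper asserts), and for $w_n=r^n$ the paper verifies the ratio condition $w_n/n\le\delta\,w_{n+1}/(n+1)$ with $\delta=2/r$ and invokes Lemma~\ref{increasing weight condition for analyticity}, which is the same monotonicity computation you carry out directly.
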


\begin{proof}
For the first statement note that we can choose $w_n \ge n$ iteratively
so that \eqref{inequ_for_analyticity} is satisfied.
The claim then follows from Theorem~\ref{thm for analyticity}.

Now assume that \eqref{local_mass_non_increasing} holds.
Let $r>2$, $w_n=r^n$ for $n \in \mathbb{N}$, and $\delta=2/r$, which satisfies $\delta<1$.
Then $w_n \ge n$ and
\[
  \delta\frac{w_{n+1}}{n+1} = \frac{2}{r}\cdot\frac{r^{n+1}}{n+1}
  = \frac{2r^n}{n+1} \ge \frac{2r^n}{n+n} = \frac{r^n}{n} = \frac{w_n}{n}\,,
\]
which shows that \eqref{suff_cond_analyticity} is satisfied.
Hence Lemma~\ref{increasing weight condition for analyticity} implies that
Assumption~\ref{assumption for analyticity} is fulfilled.
\end{proof}

As mentioned earlier, analytic semigroups have a number of desirable properties,
and Theorem~\ref{can always find analytic semigroup} will play an important role
when we investigate the full coagulation--fragmentation system in a subsequent paper.
In particular, Theorem~\ref{can always find analytic semigroup} will enable us to relax
the usual assumptions that are imposed on the coagulation rates in order to obtain
the existence and uniqueness of solutions to the full coagulation--fragmentation system.

It should be noted that a condition that is equivalent to
Assumption~\ref{assumption for analyticity} has previously
been used as a condition for analyticity in the mass-conserving case by Banasiak;
see \cite[Theorem~2.1]{banasiak2012global}.
However, the choice of weights in \cite{banasiak2012global} is restricted
to $w_n=n^p$, $p>1$, and Assumption~\ref{assumption for analyticity} need not
be satisfied for these weights for any $p>1$ as the following example shows.

\begin{example}\label{finding weights for binary fragmentation}
Consider the mass-conserving case where a cluster of mass $n$ breaks into two clusters,
with respective masses $1$ and $n-1$.  The corresponding fragmentation coefficients
take the form
\begin{equation}\label{binary fragmentation coefficients}
  b_{1,2}=2; \quad b_{1,j}=b_{j-1,j}=1, \;\; j\ge3;
  \quad b_{n,j}=0, \;\; 2 \le n \le j-2.
\end{equation}
For the choice
\[
  a_0 = 0; \quad a_n = n, \quad n \ge 2; \qquad
  w_n=n^p, \quad n \in \mathbb{N}; \qquad p \ge 1,
\]
it is proved in \cite[Theorem~3]{banasiak2011irregular} (for $p=1$)
and \cite[Theorem~A.3]{banasiak2012global} (for $p>1$)
that the semigroup generated by $G^{(w)}$ is not analytic.
On the other hand, Theorem~\ref{can always find analytic semigroup}
guarantees the existence of exponentially growing weights $w_n$ such that
$G^{(w)}=A^{(w)}+B^{(w)}$ generates an analytic semigroup.
It is easy to show that for this particular example one can also choose
powers of $2$, namely,
$w_1=1$ and $w_n=2^n$ for $n \ge 2$, in which case $\kappa=5/8$.
\end{example}

\section{Asymptotic behaviour of solutions}\label{Asymptotic Behaviour of Solutions}

There have been several earlier investigations into the long-term behaviour
of solutions to the mass-conserving fragmentation system \eqref{full frag system},
when \eqref{mass_conserved} holds.
In particular, the case of  mass-conserving binary fragmentation is dealt with
in \cite{CadC94}, where it is shown that, under suitable assumptions,
the unique solution emanating from $\mathring{u}$ must converge in the space $X_{[1]}$
to the expected steady-state solution $M_1(\mathring{u})e_1$,
where $M_1(\mathring{u})$ and $e_1$ are given by \eqref{total mass}
and \eqref{basis} respectively.
This was followed by \cite{banasiak2011irregular,banasiaklamb2012discrete}
where, once again, the expected long-term steady-state behaviour is established,
but now for the mass-conserving multiple-fragmentation system.
More specifically, in \cite{banasiak2011irregular}, a semigroup-based approach
is used to prove that, for any $\mathring{u} \in X_{[1]}$,
\[
  \lim_{t \to \infty}\Vert S_1(t)\mathring{u}- M(\mathring{u})e_1\Vert_{[1]} = 0
  \qquad\text{if and only if}\qquad a_n > 0 \;\; \text{for all} \ n = 2,3,\ldots.
\]
That the corresponding result is also valid in the higher moment spaces $X_{[p]}$, $p > 1$,
is established in \cite{banasiaklamb2012discrete}, and,
under additional assumptions on the fragmentation coefficients,
it is shown in \cite[Theorem~4.3]{banasiaklamb2012discrete} that there exist
constants $L>0$ and $\alpha>0$ such that
the fragmentation semigroup $(S_p(t))_{t \ge 0}$ on $X_{[p]}$, $p>1$,  satisfies
\begin{equation}\label{eq AEG}
  \Vert S_p(t)\mathring{u} - M_1(\mathring{u})e_1\Vert_{[p]}
  \le Le^{-\alpha t}\Vert \mathring{u} \Vert_{[p]},
\end{equation}
for all $\mathring{u} \in X_{[p]}$.
It follows from \cite{arino1992}, that the fragmentation
semigroup $(S_p(t))_{t \geq 0}$ has the asynchronous exponential growth (AEG) property
(with $\lambda^*=0$ in \cite[equation~(3)]{arino1992}, i.e.\ with trivial growth).
The assumptions required in \cite{banasiaklamb2012discrete} to prove that
\eqref{eq AEG} holds in some $X_{[p]}$ space are somewhat technical and
not straightforward to check.
Moreover no information on the size of the constant $\alpha$,
and hence the exponential rate of decay to the steady state, is provided.
Our aim in this section is to address these issues.
Working within the framework of more general weighted $\ell^1$ spaces,
we study the long-term dynamics of solutions in both the mass-conserving
and mass-loss cases.
When mass is conserved, we establish simpler conditions under which
the fragmentation semigroup $(S^{(w)}(t))_{t \geq 0}$ satisfies an inequality
of the form \eqref{eq AEG} on some space $\ell^1_w$, and also quantify~$\alpha$.

We begin by considering the general fragmentation system \eqref{full frag system},
where the coefficients $a_n$ and $b_{n,j}$ satisfy Assumption~\ref{A1.1},
and recall that $G^{(w)}=\overline{A^{(w)}+B^{(w)}}$ is the generator of
a substochastic $C_0$-semigroup, $(S^{(w)}(t))_{t \ge 0}$, on $\ell_w^1$
whenever Assumption~\ref{assumption on weight for generation} holds.
Furthermore, $(S^{(w)}(t))_{t \ge 0}$ is analytic, with generator $A^{(w)}+B^{(w)}$
when the more restrictive Assumption~\ref{assumption for analyticity} is satisfied.

\begin{theorem}\label{mass loss decay of solution}
Let Assumptions~\ref{A1.1} and \ref{assumption on weight for generation} hold.
\begin{myenum}
\item 
Then
\begin{equation}\label{decay of solution to zero}
  \lim\limits_{t \to \infty} \Vert S^{(w)}(t)\mathring{u} \Vert_w = 0
\end{equation}
for all $\mathring{u} \in \ell_w^1$ if and only if $a_n > 0$ for all $n \in \mathbb{N}$.
\item 
If, additionally, we choose $w$ such that Assumption~\ref{assumption for analyticity}
is satisfied, and set $a_0 \coloneqq \inf_{n \in \mathbb{N}} a_n$, then
\begin{equation}\label{semigroup exponential bound}
  \Vert S^{(w)}(t) \Vert \le e^{-(1-\kappa)a_0t},
\end{equation}
and hence, if $a_0>0$ and $\alpha \in [0,(1-\kappa)a_0)$, we have
\begin{equation}\label{exponential decay to zero}
  \lim\limits_{t \to \infty} e^{\alpha t}\Vert S^{(w)}(t)\mathring{u} \Vert_w = 0
  \qquad \text{for every} \ \mathring{u} \in \ell_w^1.
\end{equation}
If $\alpha > a_0$, then \eqref{exponential decay to zero} does not hold.
In particular, if $a_0=0$, then \eqref{exponential decay to zero} does not hold
for any $\alpha>0$.
\end{myenum}
\end{theorem}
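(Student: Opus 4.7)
The plan is to handle (i) by a finite-dimensional reduction based on the block structure \eqref{S/Sw matrix}, and (ii) by a Gr\"onwall argument applied to $t\mapsto\Vert S^{(w)}(t)f\Vert_w$. For the ``only if'' direction of (i), if $a_{n_0}=0$ for some $n_0$, then $A^{(w)}e_{n_0}=0$, and since every term in $\mathcal{B}e_{n_0}$ carries a factor $a_{n_0}$, we also have $B^{(w)}e_{n_0}=0$. Hence $e_{n_0}\in\mathcal{D}(A^{(w)})$ lies in the kernel of $G^{(w)}$, so $S^{(w)}(t)e_{n_0}=e_{n_0}$ for all $t$ and $\Vert S^{(w)}(t)e_{n_0}\Vert_w=w_{n_0}\not\to 0$, which suffices.

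For the converse, suppose $a_n>0$ for every $n$. The invariance of $X_N:=\spn\{e_1,\ldots,e_N\}$ under $S^{(w)}(t)$, noted after \eqref{S/Sw matrix}, means the restriction of the semigroup to $X_N$ is a finite-dimensional $C_0$-semigroup whose generator, read off from \eqref{finite_system}, is the upper-triangular matrix with diagonal entries $-a_1,\ldots,-a_N$. Its spectrum is therefore $\{-a_1,\ldots,-a_N\}$, which lies in the open left half plane, so the restricted semigroup decays exponentially on $X_N$. In particular, $\Vert S^{(w)}(t)P_N\mathring u\Vert_w\to 0$ as $t\to\infty$ for every $\mathring u\in\ell_w^1$ and each fixed $N$, where $P_N$ is the truncation used in the proof of Theorem~\ref{pointwise system satisfied for any IC}. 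Given $\varepsilon>0$, choose $N$ with $\Vert\mathring u-P_N\mathring u\Vert_w<\varepsilon/2$; substochasticity of $(S^{(w)}(t))_{t\ge0}$ controls the tail uniformly in $t$, and \eqref{decay of solution to zero} follows.

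For part (ii), under Assumption~\ref{assumption for analyticity}, Theorem~\ref{thm for analyticity} gives $\mathcal{D}(G^{(w)})=\mathcal{D}(A^{(w)})$. For $f\in\mathcal{D}(A^{(w)})_+$, positivity of the semigroup and invariance of the domain yield $S^{(w)}(t)f\in\mathcal{D}(A^{(w)})_+$, whence $\Vert S^{(w)}(t)f\Vert_w=\phi_w(S^{(w)}(t)f)$. Applying \eqref{phiwBwf_le_kappa_phiwAw} pointwise in $t$, together with the elementary bound $\phi_w(A^{(w)}g)=-\sum_n w_na_ng_n\le -a_0\Vert g\Vert_w$ for $g\ge 0$, produces
\[
  \frac{\rd}{\rd t}\Vert S^{(w)}(t)f\Vert_w
  =\phi_w\bigl(G^{(w)}S^{(w)}(t)f\bigr)
  \le (1-\kappa)\phi_w\bigl(A^{(w)}S^{(w)}(t)f\bigr)
  \le -(1-\kappa)a_0\Vert S^{(w)}(t)f\Vert_w,
\]
and Gr\"onwall gives $\Vert S^{(w)}(t)f\Vert_w\le e^{-(1-\kappa)a_0t}\Vert f\Vert_w$. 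The bound \eqref{semigroup exponential bound} then follows by approximating an arbitrary $f\ge 0$ via $P_Nf\in\mathcal{D}(A^{(w)})_+$ and splitting $f=f_+-f_-$ in the AL-space $\ell_w^1$, and \eqref{exponential decay to zero} is immediate for $\alpha<(1-\kappa)a_0$. Conversely, if $\alpha>a_0$, then by definition of the infimum there exists $n$ with $a_n<\alpha$; from \eqref{S/Sw matrix} we have $\Vert S^{(w)}(t)e_n\Vert_w\ge w_n s_{n,n}(t)=w_ne^{-a_nt}$, so $e^{\alpha t}\Vert S^{(w)}(t)e_n\Vert_w\ge w_n e^{(\alpha-a_n)t}\to\infty$; the case $a_0=0$ is the special instance $\alpha>0$. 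The main subtlety I anticipate is justifying the differential inequality and transferring it from $\mathcal{D}(A^{(w)})_+$ to all of $\ell_w^1$; this is routine given the $A^{(w)}$-boundedness of $B^{(w)}$ from \eqref{B bounded by A} and the density of $P_Nf$ in $(\ell_w^1)_+$, but the positivity and domain-invariance hypotheses must be kept in view throughout.
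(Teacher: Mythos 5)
Your argument is correct and follows essentially the same route as the paper: the finite-dimensional reduction via the upper-triangular matrix representation plus an $\varepsilon/2$-truncation for part (i), and the $\phi_w$-differential inequality with Gr\"onwall for part (ii). The only differences are cosmetic --- you exhibit $e_{n_0}$ as a fixed point rather than just noting $s_{N,N}(t)=1$, and you run the Gr\"onwall step on $\mathcal{D}(A^{(w)})_+$ and extend by density, where the paper applies it directly to arbitrary non-negative data using analyticity to place $S^{(w)}(t)\mathring u$ in the domain for $t>0$.
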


\begin{proof}
(\romannum{1})
First assume that $a_n>0$ for all $n \in \mathbb{N}$.
Let $\mathring{u} \in \ell_w^1$, and, as in Section~\ref{Pointwise Problem},
let $P_N\mathring{u} = (\mathring{u}_1,\mathring{u}_2,\ldots, \mathring{u}_N,0, \ldots )$,
$N \in \mathbb{N}$.  For each fixed $n \in \mathbb{N}$,
we know from \eqref{S/Sw matrix} that $(S^{(w)}(t)e_n)_m=s_{m,n}(t)=0$ for $m>n$.
Furthermore, $(s_{1,n},s_{2,n},\ldots, s_{n,n})$,
with the identification \eqref{s_mn_u_m}, is the unique solution
of the $n$-dimensional system \eqref{finite_system}.
Our assumption on the coefficients $a_n$ means that all eigenvalues of the
matrix associated with \eqref{finite_system} are negative.
It follows that $s_{m,n}(t) \to 0$ as $t \to \infty$ for $m=1,\ldots,n$, and therefore
\[
  \lim\limits_{t \to \infty} \Vert S^{(w)}(t)e_n \Vert_w
  = \lim\limits_{t \to \infty} \sum\limits_{m=1}^{n} w_ms_{m,n}(t) = 0,
\]
for all $n \in \mathbb{N}$.
This in turn implies that
\[
  \Vert S^{(w)}(t)P_N\mathring{u} \Vert_w
  \le \sum\limits_{n=1}^N |\mathring{u}_n| \Vert S^{(w)}(t)e_n \Vert_w
  \to 0 \qquad \text{as} \ t \to \infty,
\]
for each $N \in \mathbb{N}$.
Given any  $\varepsilon >0$, we can always find $N \in \mathbb{N}$ and $t_0>0$ such that
\[
  \Vert \mathring{u}-P_N\mathring{u} \Vert_w < \frac{\varepsilon}{2}
  \qquad\text{and}\qquad
  \Vert S^{(w)}(t)P_N\mathring{u} \Vert_w < \frac{\varepsilon}{2}
  \quad \text{for all} \ t \ge t_0.
\]
Then
\begin{align*}
  \Vert S^{(w)}(t)\mathring{u} \Vert_w
  &\le \bigl\Vert S^{(w)}(t)\bigl(\mathring{u}-P_N\mathring{u}\bigr) \bigr\Vert_w
  + \Vert S^{(w)}(t)P_N\mathring{u} \Vert_w \\[0.5ex]
  &\le \Vert \mathring{u}-P_N\mathring{u} \Vert_w + \Vert S^{(w)}(t)P_N\mathring{u} \Vert_w
  < \varepsilon \hspace*{7ex} \text{for all} \ t \geq t_0,
\end{align*}
which establishes \eqref{decay of solution to zero}.

On the other hand, suppose that $a_N=0$ for some $N \in \mathbb{N}$.
Then we have that the unique solution of \eqref{weighted frag ACP},
with $\mathring{u}=e_N$, is $u(t)=S^{(w)}(t)e_N=(s_{m,N}(t))_{m=1}^{\infty}$.
Since $s_{N,N}(t)=e^{-a_N t}=1$, it is clear that $u(t)\nrightarrow 0$ as $t \to \infty$.

(\romannum{2})
Now let  Assumption~\ref{assumption for analyticity} hold and
let $\mathring{u} \in (\ell_w^1)_+$.
From Theorem~\ref{thm for analyticity}, $A^{(w)}+B^{(w)}$ generates
an analytic, substochastic $C_0$-semigroup, $(S^{(w)}(t))_{t \ge 0}$, on $\ell_w^1$,
and $u(t)=S^{(w)}(t)\mathring{u}$ is the unique, non-negative classical solution
of \eqref{weighted frag ACP}.  Let $ t > 0$.
Using \eqref{phiwBwf_le_kappa_phiwAw} we obtain that
\begin{align*}
  \frac{\rd}{\rd t} \phi_w\bigl(u(t)\bigr)
  &= \phi_w\bigl(u'(t)\bigr)
  = \phi_w\bigl(A^{(w)}u(t)\bigr)+\phi_w\bigl(B^{(w)}u(t)\bigr) \\
  &\le \phi_w\bigl(A^{(w)}u(t)\bigr)-\kappa\phi_w\bigl(A^{(w)}u(t)\bigr) \\
  &= -(1-\kappa)\sum\limits_{n=1}^{\infty} w_na_nu_n(t)\\
  &\le -(1-\kappa)a_0\phi_w\bigl(u(t)\bigr).
\end{align*}
Therefore,
\[
  \phi_w\bigl(u(t)\bigr) \le \phi_w(\mathring{u})e^{-(1-\kappa)a_0t}
  \quad\text{and hence}\quad
  \Vert S^{(w)}(t)\mathring{u} \Vert_w \leq e^{-(1-\kappa)a_0t}\Vert \mathring{u} \Vert_w,
\]
and \eqref{semigroup exponential bound} then follows from the positivity
of $(S^{(w)}(t))_{t \ge 0}$ and \cite[Proposition~2.67]{banasiak2006perturbations}.
If $a_0>0$ and $\alpha \in [0,(1-\kappa)a_0)$, then \eqref{exponential decay to zero} holds.

On the other hand if we choose $\alpha>a_0$, then there exists $N \in \mathbb{N}$
such that $a_N<\alpha$, in which case $(S^{(w)}(t)e_N)_N = e^{-a_Nt} > e^{-\alpha t}$
for $t>0$, and so
\[
  e^{\alpha t}\Vert S^{(w)}(t)e_N \Vert_w
  \ge e^{\alpha t}w_N\bigl(S^{(w)}(t)e_N\bigr)_N
  > e^{\alpha t}w_Ne^{-\alpha t} = w_N.
\]
Hence \eqref{exponential decay to zero} cannot hold for any $\alpha>a_0$.
\end{proof}

\begin{remark}\label{mass loss remark about equilibrium points}
When the assumptions of Theorem~\ref{mass loss decay of solution} are satisfied
and $a_n>0$ for all $n \in \mathbb{N}$, then \eqref{decay of solution to zero} shows
that the only equilibrium solution of \eqref{weighted frag ACP} is $u(t)\equiv 0$,
and this equilibrium is a global attractor for the system.
On the other hand, if $a_n=0$ for at least one $n \in \mathbb{N}$,
then $u(t)\equiv 0$ is not a global attractor.
\end{remark}

We now examine the mass-conserving case and assume that \eqref{mass_conserved} holds.
Note that, in this mass-conserving case, the fragmentation semigroup $(S_1(t))_{t \ge 0}$
is stochastic on the space $X_{[1]}$.
Our aim  is to establish an $\ell_w^1$ version of the results obtained
in \cite{banasiak2011irregular,banasiaklamb2012discrete,CadC94}.
To this end, we recall the matrix representation of $S^{(w)}(t)$ given
by  \eqref{S/Sw matrix}, and also define a sequence space $Y^{(w)}$,
and its norm  $\normcdotsub{Y^{(w)}}$, by
\[
  Y^{(w)} = \bigl\{\tilde{f}=(f_n)_{n=2}^{\infty}:
  f=(f_n)_{n=1}^{\infty} \in \ell_w^1\bigr\}
  \qquad\text{and}\qquad
  \Vert f \Vert_{Y^{(w)}} = \sum\limits_{n=2}^{\infty} w_n|f_n|,
\]
respectively.  Clearly, $Y^{(w)}$ is a weighted $\ell^1$ space,
and can be identified with $\ell_{\widehat{w}}^1$, where $\widehat{w}_n=w_{n+1}$
for $n \in \mathbb{N}$.  Moreover, we define the
embedding operator $J:Y^{(w)}\to\ell_w^1$ by
\begin{equation*}
  Jf = (0,f_2,f_3,\ldots) \qquad \text{for all} \ f \in \ell_w^1.
\end{equation*}

\begin{lemma}\label{convergence lemma}
Let $\alpha \ge 0$ and $f \in \ell_w^1$ be fixed, and
define $\tilde{f} \coloneqq (f_n)_{n=2}^{\infty}$.
If Assumptions~\ref{A1.1}, \ref{assumption on weight for generation}
and \eqref{mass_conserved} hold, then
\begin{equation}\label{equivalence}
  \Vert S^{(w)}_{(22)}(t)\tilde{f} \Vert_{Y^{(w)}}
  \le \Vert S^{(w)}(t)f-M_1(f)e_1 \Vert_w
  \le (w_1+1)\Vert S^{(w)}_{(22)}(t)\tilde{f} \Vert_{Y^{(w)}}
\end{equation}
for all $t \ge 0$.
\end{lemma}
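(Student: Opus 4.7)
The plan is to exploit the block matrix form of $S^{(w)}(t)$ given in \eqref{S/Sw matrix}. Since \eqref{mass_conserved} holds we have $a_1=0$, and, writing $f=(f_1,\tilde{f})$ with $\tilde{f}=(f_n)_{n=2}^\infty$, this gives
\[
  \bigl(S^{(w)}(t)f\bigr)_1 = f_1 + S^{(w)}_{(12)}(t)\tilde{f},
  \qquad
  \bigl(S^{(w)}(t)f\bigr)_n = \bigl(S^{(w)}_{(22)}(t)\tilde{f}\bigr)_n
  \quad \text{for} \ n\ge 2.
\]

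First I would upgrade mass conservation from $\mathcal{D}(G^{(w)})_+$, where it is supplied by Corollary~\ref{solution of GACP}, to all of $\ell_w^1$. By Assumption~\ref{assumption on weight for generation}\,(i), $\ell_w^1$ embeds continuously into $X_{[1]}$, and by Theorem~\ref{G=closure for frag} together with \eqref{mass_conserved} the semigroup $(S_1(t))_{t\ge0}$ on $X_{[1]}$ is stochastic. Since the matrix entries $s_{m,n}(t)$ in \eqref{S/Sw matrix} are independent of the weight (as noted after \eqref{S/Sw matrix}), $S^{(w)}(t)$ coincides with the restriction of $S_1(t)$ to $\ell_w^1$. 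Splitting $f=f_+-f_-$ in the lattice $\ell_w^1$ and applying $\|S_1(t)g\|_{[1]}=\|g\|_{[1]}$ to each of $f_\pm\in(X_{[1]})_+$ then yields $M_1\bigl(S^{(w)}(t)f\bigr)=M_1(f)$ for every $f\in\ell_w^1$.

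The remainder is a direct computation. Put $v(t)\coloneqq S^{(w)}(t)f-M_1(f)e_1$. For $n\ge 2$, $v_n(t)=(S^{(w)}_{(22)}(t)\tilde f)_n$, which immediately produces the lower bound
\[
  \|v(t)\|_w \ge \sum_{n=2}^\infty w_n\bigl|\bigl(S^{(w)}_{(22)}(t)\tilde f\bigr)_n\bigr| = \|S^{(w)}_{(22)}(t)\tilde f\|_{Y^{(w)}}.
\]
For the first coordinate, mass conservation applied to $S^{(w)}(t)f$ gives
\[
  v_1(t) = \bigl(S^{(w)}(t)f\bigr)_1 - M_1(f) = -\sum_{n=2}^\infty n\bigl(S^{(w)}_{(22)}(t)\tilde f\bigr)_n,
\]
and, using $n\le w_n$ from Assumption~\ref{assumption on weight for generation}\,(i), we obtain $|v_1(t)|\le\|S^{(w)}_{(22)}(t)\tilde f\|_{Y^{(w)}}$. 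Adding the weighted first coordinate to the tail then yields $\|v(t)\|_w\le(w_1+1)\|S^{(w)}_{(22)}(t)\tilde f\|_{Y^{(w)}}$, which is the claimed upper bound.

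The only genuinely non-routine step is extending mass conservation from the dense domain $\mathcal{D}(G^{(w)})_+$ to arbitrary $f\in\ell_w^1$; once this is handled via stochasticity of $(S_1(t))$ on the larger space $X_{[1]}$ and the weight-independence of the $s_{m,n}(t)$, the rest is bookkeeping driven by \eqref{S/Sw matrix} and the inequality $w_n\ge n$.
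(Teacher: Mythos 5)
Your proposal is correct and follows essentially the same route as the paper: both read off the block structure \eqref{S/Sw matrix}, obtain $M_1(S^{(w)}(t)f)=M_1(f)$ from the stochasticity of $(S_1(t))_{t\ge0}$ on $X_{[1]}$ (your positive/negative splitting is exactly the proof of Proposition~\ref{prop:stochastic_semigroup}\,(\romannum{1})), and bound the first coordinate of the deviation by $\Vert S^{(w)}_{(22)}(t)\tilde f\Vert_{Y^{(w)}}$ using $n\le w_n$. No gaps.
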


\begin{proof}
It follows from \eqref{S/Sw matrix} that
\begin{equation}\label{split_Stf}
  S^{(w)}(t)f = \bigl(f_1+S^{(w)}_{(12)}(t)\tilde{f}\bigr)e_1+JS_{(22)}^{(w)}(t)\tilde{f}.
\end{equation}
From this we deduce that
\begin{equation}\label{link between semigroup norms}
  \Vert S^{(w)}(t)f-M_1(f)e_1 \Vert_w
  = w_1\Bigl\lvert f_1+S^{(w)}_{(12)}(t)\tilde{f}-M_1(f)\Bigr\rvert
  + \Vert S^{(w)}_{(22)}(t)\tilde{f} \Vert_{Y^{(w)}}
\end{equation}
and so
\[
  \Vert S^{(w)}(t)f-M_1(f)e_1 \Vert_w
  \ge \Vert S_{(22)}^{(w)}(t)\tilde{f} \Vert_{Y^{(w)}},
\]
which is the first inequality in \eqref{equivalence}.

On the other hand, from Proposition~\ref{prop:stochastic_semigroup}\,(i)
and the stochasticity of $(S_1(t))_{t \ge 0}$ on $X_{[1]}$,
we know that $M_1(S^{(w)}(t)f)=M_1(S_1(t)f)=M_1(f)$.
Using \eqref{split_Stf} we obtain
\begin{align*}
  \Big|f_1+S^{(w)}_{(12)}(t)\tilde{f}- M_1(f)\Big|
  &= \Big|M_1(f)- M_1\Bigl(\bigl(f_1+S_{(12)}^{(w)}(t)\tilde{f}\bigr)e_1\Bigr)\Big| \\
  &= \Big|M_1\bigl(S^{(w)}(t)f\bigr)
  - M_1\Bigl(\bigl(f_1+S^{(w)}_{(12)}(t)\tilde{f}\bigr)e_1\Bigr)\Big| \\
  &\le M_1\Bigl(\Big| S^{(w)}(t)f - \bigl(f_1+S^{(w)}_{(12)}(t)\tilde{f}\bigr)e_1\Big|\Bigr) \\
  &\le \phi_w\Bigl(\Big|S^{(w)}(t)f-\bigl(f_1+S^{(w)}_{(12)}(t)\tilde{f}\bigr)e_1\Big|\Bigr) \\
  &= \phi_w\Bigl(\Big|(JS_{(22)}^{(w)}(t)\tilde{f}\Big|\Bigr) \\
  &= \bigl\Vert S^{(w)}_{(22)}(t)\tilde{f} \bigr\Vert_{Y^{(w)}}.
\end{align*}
The second inequality in \eqref{equivalence} then follows
from \eqref{link between semigroup norms}.
\end{proof}

We are now in a position to prove the main theorem of this section.
The first part confirms that $S^{(w)}(t)\mathring{u} \to M_1(\mathring{u})e_1$
in $\ell^1_w$ as $t \to \infty$, for all $\mathring{u} \in \ell_w^1$,
provided that Assumption~\ref{assumption on weight for generation} holds
and the fragmentation rates, $a_n$, are positive for all $n \geq 2$.
In the second part, which deals with quantifying the rate of convergence to equilibrium, the fragmentation coefficients
are assumed additionally to be bounded below by a positive constant,
and Assumption~\ref{assumption on weight for generation} is strengthened
to Assumption~\ref{assumption for analyticity}.
In this case, the decay to zero of $\Vert S^{(w)}(t)\mathring{u} - M_1(\mathring{u})e_1\Vert_w$
is shown to occur at an exponential rate, defined explicitly in terms of
the rate coefficients and the constant $\kappa \in (0,1)$ in
Assumption~\ref{assumption for analyticity}.

\begin{theorem}\label{mass conservation decay theorem}
Let Assumptions~\ref{A1.1} and \ref{assumption on weight for generation},
and \eqref{mass_conserved} hold and let $M_1$ be as in \eqref{total mass}.
\begin{myenum}
\item 
We have
\begin{equation}\label{decay to monomer state}
  \lim\limits_{t \to \infty} \Vert S^{(w)}(t)\mathring{u}-M_1(\mathring{u})e_1 \Vert_w = 0
\end{equation}
for all $\mathring{u} \in \ell_w^1$ if and only if $a_n>0$ for all $n \ge 2$.
\item 
Choose $w$ such that Assumption~\ref{assumption for analyticity} holds and
let $\widehat{a}_0 \coloneqq \inf_{n \in \mathbb{N}: n \ge 2} a_n$.
Then, for all $\mathring{u} \in \ell_w^1$,
\begin{equation}\label{exponential bound for S-phi}
  \Vert S^{(w)}(t)\mathring{u}-M_1(\mathring{u})e_1 \Vert_w
  \le (w_1+1)e^{-(1-\kappa)\widehat{a}_0t}\Vert \mathring{u} \Vert_w,
\end{equation}
and so
\begin{equation}\label{exponential decay to monomer state}
  \lim\limits_{t \to \infty} e^{\alpha t}
  \Vert S^{(w)}(t)\mathring{u}-M_1(\mathring{u})e_1 \Vert_w = 0,
\end{equation}
whenever $\widehat{a}_0>0$ and $\alpha \in [0,(1-\kappa)\widehat{a}_0)$.

Equation \eqref{exponential decay to monomer state} does not hold
for any $\alpha>\widehat{a}_0$.
In particular, if $\widehat{a}_0=0$, then \eqref{exponential decay to monomer state}
does not hold for any $\alpha>0$.
\end{myenum}
\end{theorem}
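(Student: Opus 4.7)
The strategy is to reduce both parts to the mass-loss result, Theorem~\ref{mass loss decay of solution}, by recognising $S^{(w)}_{(22)}(t)$ as the fragmentation semigroup of an auxiliary problem posed on the shifted space $Y^{(w)}$. The bridge is Lemma~\ref{convergence lemma}, which sandwiches $\Vert S^{(w)}(t)\mathring{u} - M_1(\mathring{u})e_1\Vert_w$ between $\Vert S^{(w)}_{(22)}(t)\tilde{f}\Vert_{Y^{(w)}}$ and $(w_1+1)\Vert S^{(w)}_{(22)}(t)\tilde{f}\Vert_{Y^{(w)}}$, where $\tilde{f} \coloneqq (\mathring{u}_n)_{n=2}^\infty$.

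I would introduce the shifted fragmentation problem on $Y^{(w)} \cong \ell^1_{\widehat w}$ with weight $\widehat w_k = w_{k+1}$ and rates $a_{k+1}$, $b_{l+1,k+1}$ (for $l<k$). Assumption~\ref{A1.1} is immediate for these coefficients; Assumption~\ref{assumption on weight for generation} (resp.\ Assumption~\ref{assumption for analyticity}) transfers with the \emph{same} constant $\kappa$, since $\widehat w_k \ge k$ and
\[
  \sum_{l=1}^{k-1} \widehat w_l\, b_{l+1,k+1}
  = \sum_{n=2}^{k} w_n b_{n,k+1}
  \le \sum_{n=1}^{k} w_n b_{n,k+1}
  \le \kappa w_{k+1}
  = \kappa \widehat w_k.
\]
Thus Theorem~\ref{G=closure for frag} (resp.\ Theorem~\ref{thm for analyticity}) produces a substochastic $C_0$-semigroup $\widehat S(t)$ on $Y^{(w)}$ associated with the shifted data. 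The block-triangular form \eqref{S/Sw matrix} shows that $S^{(w)}_{(22)}(t)$ itself is a positive $C_0$-semigroup on $Y^{(w)}$, and comparing the finite systems \eqref{finite_system} for the two problems basis-vector by basis-vector identifies $S^{(w)}_{(22)}(t) = \widehat S(t)$; the extension to arbitrary initial data in $Y^{(w)}$ is routine by density and the substochastic bound.

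Once this identification is in place, part (i) follows by applying Theorem~\ref{mass loss decay of solution}(i) to the shifted system: $\Vert S^{(w)}_{(22)}(t)\tilde{f}\Vert_{Y^{(w)}} \to 0$ for every $\tilde{f} \in Y^{(w)}$ if and only if $a_{k+1}>0$ for all $k \in \mathbb{N}$, i.e.\ $a_n>0$ for all $n \ge 2$, and the upper inequality in \eqref{equivalence} delivers \eqref{decay to monomer state}. For part (ii), Theorem~\ref{mass loss decay of solution}(ii) yields $\Vert S^{(w)}_{(22)}(t)\Vert \le e^{-(1-\kappa)\widehat a_0 t}$ (noting $\inf_{k\ge1} a_{k+1} = \widehat a_0$), and substituting into \eqref{equivalence} together with $\Vert \tilde{f}\Vert_{Y^{(w)}} \le \Vert\mathring{u}\Vert_w$ produces the explicit bound \eqref{exponential bound for S-phi}, from which \eqref{exponential decay to monomer state} is immediate.

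For the converse statements I would use the basis vectors $e_N$, $N \ge 2$, as counterexamples. If $a_N = 0$, then the pointwise system \eqref{full frag system} with initial datum $e_N$ admits the stationary solution $u(t) \equiv e_N$ (no cluster of size greater than $N$ is ever produced, and the $N$-th component neither decays nor feeds the smaller ones), so $\Vert S^{(w)}(t) e_N - N e_1\Vert_w \equiv w_N + N w_1 > 0$. If $\alpha > \widehat a_0$, pick $N \ge 2$ with $a_N < \alpha$; since $(S^{(w)}(t)e_N)_N = e^{-a_N t}$, we obtain $e^{\alpha t}\Vert S^{(w)}(t)e_N - N e_1\Vert_w \ge w_N e^{(\alpha - a_N)t} \to \infty$. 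The main technical hurdle I anticipate is the identification $S^{(w)}_{(22)}(t) = \widehat S(t)$: the subspace $\{f \in \ell^1_w : f_1 = 0\}$ is \emph{not} invariant under $S^{(w)}(t)$, so one must treat the projected/blocked semigroup on $Y^{(w)}$ separately and invoke uniqueness of the shifted fragmentation semigroup via Lemma~\ref{uniqueness of extension generator}.
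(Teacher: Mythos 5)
Your proposal is correct and follows essentially the same route as the paper: pass to the shifted system on $Y^{(w)}\cong\ell^1_{\widehat w}$, check that Assumptions~\ref{A1.1} and \ref{assumption on weight for generation} (resp.\ \ref{assumption for analyticity}) transfer with the same $\kappa$, identify $S^{(w)}_{(22)}(t)$ with the reduced fragmentation semigroup by comparing the finite systems \eqref{finite_system}, and then combine Theorem~\ref{mass loss decay of solution} with Lemma~\ref{convergence lemma}. Your direct counterexamples $e_N$ for the converse statements coincide with what the cited theorem's proof uses internally, so there is no substantive difference.
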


\begin{proof}
Removing the equation for $u_1$ from  \eqref{full frag system} leads to a
reduced fragmentation system that can be formulated as an ACP
in $Y^{(w)}=\ell_{\widehat{w}}^1$, where, as before, $\widehat{w}_n=w_{n+1}$
for all $n \in \mathbb{N}$.
The fragmentation coefficients, $(\widehat{a}_n)_{n=1}^{\infty}$
and $(\widehat{b}_{n,j})_{n, j \in \mathbb{N}: n<j}$, associated with the reduced system
are given by $\widehat{a}_n=a_{n+1}$ and $\widehat{b}_{n,j}=b_{n+1,j+1}$.
Clearly, $\widehat{a}_n \ge 0$ and $\widehat{b}_{n,j} \ge 0$
for all $n,j \in \mathbb{N}$ and $\widehat{b}_{n,j} = 0$ if $n \ge j$,
and $\widehat{w}_n=w_{n+1} \ge n+1 > n$ for all $n \in \mathbb{N}$.
Moreover, for $j=2,3,\ldots$,
\begin{align*}
  \sum\limits_{n=1}^{j-1} \widehat{w}_n\widehat{b}_{n,j}
  &= \sum\limits_{n=1}^{j-1} w_{n+1}b_{n+1,j+1}
  = \sum\limits_{k=2}^{j} w_kb_{k,j+1}
  \le \sum\limits_{k=1}^{j} w_kb_{k,j+1} \\
  &\le \kappa w_{j+1}=\kappa \widehat{w}_j.
\end{align*}
Hence Assumptions~\ref{A1.1} and \ref{assumption on weight for generation}
are satisfied by $\widehat{w}$, $\widehat{a}_n$ and $\widehat{b}_{n,j}$,
and it follows from Theorem~\ref{G=closure for frag} and \eqref{S/Sw matrix}
that associated with the reduced system is a substochastic $C_0$-semigroup
on $Y^{(w)}$, which can be represented by the infinite matrix
\begin{equation}
  \begin{bmatrix}
    e^{-\widehat{a}_1t} & \widehat{s}_{1,2}(t) & \widehat{s}_{1,3}(t) & \cdots \\[1ex]
    0 & e^{-\widehat{a}_2t} & \widehat{s}_{2,3}(t) & \cdots \\[1ex]
    0 & 0 & e^{-\widehat{a}_3t} & \cdots \\
    \vdots & \vdots & \vdots & \ddots
  \end{bmatrix}
  =\begin{bmatrix}
    e^{-a_2t} & \widehat{s}_{1,2}(t) & \widehat{s}_{1,3}(t) & \cdots \\[1ex]
    0 & e^{-a_3t} & \widehat{s}_{2,3}(t) & \cdots \\[1ex]
    0 & 0 & e^{-a_4t} & \cdots \\
    \vdots & \vdots & \vdots & \ddots
  \end{bmatrix},
\end{equation}
where, for all $n \in \mathbb{N}$, $m=1, \ldots, n-1$, $t \ge 0$, $\widehat{s}_{m,n}(t)$
is the unique solution of
\begin{align*}
  \widehat{s}_{m,n}'(t) &= -\widehat{a}_m\widehat{s}_{m,n}(t)
  + \sum\limits_{j=m+1}^n \widehat{a}_j\widehat{b}_{m,j}\widehat{s}_{j,n}(t) \\
  &= -a_{m+1}\widehat{s}_{m,n}(t)
  + \sum\limits_{k=m+2}^{n+1} a_{k}b_{m+1,k}\widehat{s}_{k-1,n}(t).
\end{align*}
An inspection of \eqref{finite_system}, together with \eqref{s_mn_u_m},
shows that $\widehat{s}_{m,n}(t)=s_{m+1,n+1}(t)$
for all $n \in \mathbb{N}$, $m=1,\ldots, n-1$, $t \ge 0$, and therefore
the substochastic semigroup on $Y^{(w)}$ is given by $(S^{(w)}_{(22)}(t))_{t \ge 0}$,
where $(S^{(w)}_{(22)}(t))_{t \ge 0}$ is the infinite matrix that features
in \eqref{S/Sw matrix}.

(\romannum{1})
Let $\widehat{\mathring{u}}=(\mathring{u}_2,\mathring{u}_3,\ldots)$
for each $\mathring{u} \in \ell_w^1$.
From  Theorem~\ref{mass loss decay of solution}, we deduce that
\[
  \lim\limits_{t \to \infty}
  \bigl\Vert S^{(w)}_{(22)}(t)\widehat{\mathring{u}}\bigr\Vert_{Y^{(w)}}
  = \lim\limits_{t \to \infty}
  \bigl\Vert S^{(w)}_{(22)}(t)\widehat{\mathring{u}}\bigr\Vert_{\widehat{w}}
  = 0,
\]
if and only if $a_n>0$ for all $n \ge 2$, and the result is then an
immediate consequence of Lemma~\ref{convergence lemma}.

(\romannum{2})
The calculations above show that, when Assumption~\ref{assumption on weight for generation}
holds for $w$ and the coefficients $(b_{n,j})$, it is also satisfied by $\widehat{w}$
and $(\widehat{b}_{n,j})$ with exactly  the same value of $\kappa$.
Therefore, from Theorem~\ref{mass loss decay of solution},
\[
  \Vert S^{(w)}_{(22)}(t) \Vert \leq e^{-(1-\kappa)\widehat{a}_0t},
\]
and \eqref{exponential bound for S-phi} follows immediately from
Lemma~\ref{convergence lemma}.
Moreover, if $\widehat{a}_0>0$ and $\alpha \in [0,(1-\kappa)\widehat{a}_0)$,
then we obtain \eqref{exponential decay to monomer state}.

If $\alpha>\widehat{a}_0$, then, from Theorem~\ref{mass loss decay of solution},
the result
\[
  \lim\limits_{t \to \infty} e^{\alpha t}
  \bigl\Vert S^{(w)}_{(22)}(t)\widehat{\mathring{u}}\bigr\Vert_{Y^{(w)}}
  = \lim\limits_{t \to \infty} e^{\alpha t}
  \bigl\Vert S^{(w)}_{(22)}(t)\widehat{\mathring{u}}\bigr\Vert_{\widehat{w}}
  = 0,
\]
does not hold for all $\mathring{u} \in \ell_w^1$.
Hence, from Lemma~\ref{convergence lemma}, \eqref{exponential decay to monomer state}
does not hold if $\alpha>\widehat{a}_0$.
\end{proof}

\begin{remark}
When the assumptions of Theorem~\ref{mass conservation decay theorem} are satisfied,
then it follows from \eqref{S/Sw matrix} that $\overline{u}_M = Me_1$ is an
equilibrium solution of the mass-conserving fragmentation system
for all $M \in \mathbb{R}$.
In addition, the basin of attraction for $\overline{u}_M $ is given by
$\{\mathring{u} \in \ell_w^1 : M_1(\mathring{u}) = M\}$ provided that
the assumptions of Theorem~\ref{mass conservation decay theorem} hold and $a_n>0$
for all $n \ge 2$.
On the other hand, if $a_N=0$ for some $N \ge 2$, then $Me_N$ is also an
equilibrium solution for every $M \in \mathbb{R}$.
\end{remark}

\section{Sobolev towers}\label{Sobolev towers}

In this section we use a Sobolev tower construction to obtain existence
and uniqueness results relating to the pure fragmentation system for a
larger class of initial conditions.
Sobolev towers appear to have been first applied to the
discrete fragmentation system \eqref{full frag system} in \cite{smith2012discrete},
where the authors examine a specific example and use Sobolev towers
to explain an apparent non-uniqueness of solutions.
As we demonstrate below, the theory of Sobolev towers is applicable to
more general fragmentation systems and, in the following,
the only restrictions that are imposed are that the fragmentation coefficients
satisfy Assumption~\ref{A1.1}, and also
that a weight, $w=(w_n)_{n=1}^{\infty}$, has been chosen so that
Assumption~\ref{assumption for analyticity} holds.
These restrictions imply that $G^{(w)}=A^{(w)}+B^{(w)}$ is the generator of
an analytic, substochastic $C_0$-semigroup, $(S^{(w)}(t))_{t \geq 0}$, on $\ell_w^1$.
Let $\omega_0$ be the growth bound of $(S^{(w)}(t))_{t \ge 0}$.
Choosing $\mu >\omega_0$, we rescale $(S^{(w)}(t))_{t \ge 0}$ to obtain an
analytic semigroup, $(\mathscr{S}^{(w)}(t))_{t \ge 0}=(e^{-\mu t}S^{(w)}(t))_{t \ge 0}$,
with a strictly negative growth bound.
The generator of $(\mathscr{S}^{(w)}(t))_{t \ge 0}$ is $\mathcal{G}^{(w)}=G^{(w)}-\mu I$.
We set $X^{(w)}_0=\ell_w^1$, $\normcdotsub{0} \coloneqq \normcdotsub{w}$,
$\mathscr{S}^{(w)}_0(t)=\mathscr{S}^{(w)}(t)$, $S^{(w)}_0(t)=S^{(w)}(t)$,
and $\mathcal{G}^{(w)}_0=\mathcal{G}^{(w)}$.

As described in \cite[\S \Romannum{2}.5(a)]{engel1999one},
$(\mathscr{S}^{(w)}(t))_{t \ge 0}$ can be used to construct a Sobolev tower,
$(X_n^{(w)})_{n \in \mathbb{N}}$, via
\[
  X_n^{(w)} \coloneqq \bigl(\mathcal{D}\bigl((\mathcal{G}^{(w)})^n\bigr),
  \normcdotsub{n}\bigr); \qquad
  \Vert f \Vert_n = \big\Vert (\mathcal{G}^{(w)})^nf \big\Vert_{w}, \;\;
  f \in \mathcal{D}\bigl((\mathcal{G}^{(w)})^n\bigr),
  \quad n \in \mathbb{N}.
\]
For each $n \in \mathbb{N}$, $X_n^{(w)}$ is referred to as the Sobolev space
of order $n$ associated with the semigroup $(\mathscr{S}^{(w)}(t))_{t \ge 0}$.
We also define the operator
$\mathcal{G}^{(w)}_n: X^{(w)}_n \supseteq \mathcal{D}(\mathcal{G}^{(w)}_n) \to X^{(w)}_n$
to be the restriction of $\mathcal{G}^{(w)}$ to
\[
  \mathcal{D}(\mathcal{G}^{(w)}_n)
  = \bigl\{f \in X^{(w)}_n: \mathcal{G}^{(w)}f \in X^{(w)}_n\bigr\}
  = \mathcal{D}\bigl((\mathcal{G}^{(w)})^{n+1}\bigr)
  = X^{(w)}_{n+1},
\]
for each $n \in \mathbb{N}$.

Sobolev spaces of negative order, $-n$, $n \in \mathbb{N}$, are defined recursively by
\begin{equation}\label{Sobolev tower of negative order}
  X^{(w)}_{-n} = \bigl(X^{(w)}_{-n+1}, \normcdotsub{-n}\bigr)
  \widetilde{\rule{0ex}{1.5ex}\rule{1.5ex}{0ex}}; \qquad 
  \Vert f \Vert_{-n} = \big\Vert (\mathcal{G}^{(w)}_{-n+1})^{-1}f \big\Vert_{-n+1}, \quad
  f \in X^{(w)}_{-n+1},
\end{equation}
where $(X,\normcdot)\widetilde{\rule{0ex}{1.2ex}\rule{1.5ex}{0ex}}$
denotes the completion of the normed vector space $(X,\normcdot)$.
Operators $\mathcal{G}^{(w)}_{-n}$ can then be obtained in a similar recursive manner
for each $n \in \mathbb{N}$, with  $\mathcal{G}^{(w)}_{-n}$ defined as the
unique extension of $\mathcal{G}_{-n+1}^{(w)}$
from $\mathcal{D}(\mathcal{G}^{(w)}_{-n+1})=X_{-n+2}^{(w)}$
to $\mathcal{D}(\mathcal{G}^{(w)}_{-n})=X^{(w)}_{-n+1}$;
see \cite[\S \Romannum{2}.5(a)]{engel1999one}.

From \cite[\S \Romannum{2}.5(a)]{engel1999one}, it follows that $\mathcal{G}^{(w)}_n$
is the generator of an analytic, substochastic $C_0$-semigroup,
$(\mathscr{S}_n^{(w)}(t))_{t \ge 0}$, on $X^{(w)}_n$ for all $n \in \mathbb{Z}$,
where $\mathscr{S}_{-n}^{(w)}(t)$ is the unique, continuous extension
of $\mathscr{S}^{(w)}(t)$ from $X_0^{(w)}$ to $X^{(w)}_{-n}$ for each $t \ge 0$
and $n \in \mathbb{N}$.
Since  $\mathscr{S}^{(w)}(t)=e^{-\mu t}S^{(w)}(t)$, we also obtain
the analytic, substochastic $C_0$-semigroup, $(S_{-n}^{(w)}(t))_{t \ge 0}$,
defined on $X^{(w)}_{-n}$ by $S^{(w)}_{-n}(t) = e^{\mu t}\mathscr{S}_{-n}^{(w)}(t)$.
More generally, it is known that $\mathscr{S}^{(w)}_n(t)$ is the unique,
continuous extension of $\mathscr{S}^{(w)}_m(t)$ from $X^{(w)}_m$ to $X^{(w)}_n$
when $m,n\in \mathbb{Z}$ with $m \ge n$.
The analyticity of $(\mathscr{S}_n^{(w)}(t))_{t \ge 0}$ on $X^{(w)}_n$,
also enables us to prove the following key result.

\begin{lemma}\label{semigroup is in all higher levels of tower}
Let $\mathring{u} \in X^{(w)}_n$ for some fixed $n \in \mathbb{Z}$.
Then $\mathscr{S}^{(w)}_n(t)\mathring{u} \in X^{(w)}_m$ for all $m \ge n$ and $t>0$.
\end{lemma}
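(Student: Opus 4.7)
The plan is to combine the smoothing property of analytic semigroups, recalled in Section~\ref{Preliminaries}, with the identification
\[
  \mathcal{D}\bigl((\mathcal{G}^{(w)}_n)^k\bigr) = X^{(w)}_{n+k}
\]
valid for every $n \in \mathbb{Z}$ and every $k \ge 1$, which is built into the Sobolev tower construction. Once these two ingredients are in place, the conclusion follows by setting $k = m-n$.

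The first ingredient is direct. For every $n \in \mathbb{Z}$ the semigroup $(\mathscr{S}^{(w)}_n(t))_{t \ge 0}$ on $X^{(w)}_n$ is analytic, so the smoothing property of analytic semigroups recalled in Section~\ref{Preliminaries} gives $\mathscr{S}^{(w)}_n(t)\mathring{u} \in \mathcal{D}((\mathcal{G}^{(w)}_n)^k)$ for every $\mathring{u} \in X^{(w)}_n$, every $t > 0$ and every $k \ge 1$.

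For the second ingredient, when $n \ge 0$ the identity $\mathcal{D}((\mathcal{G}^{(w)}_n)^k) = X^{(w)}_{n+k}$ is immediate from the definition of the tower together with the fact that $\mathcal{G}^{(w)}_n$ is the restriction of $\mathcal{G}^{(w)}$ to $X^{(w)}_n$ with $\mathcal{D}(\mathcal{G}^{(w)}_n) = X^{(w)}_{n+1}$: iterating this restriction $k$ times yields $X^{(w)}_{n+k}$. For $n < 0$ I will argue by induction on $k$ using \eqref{Sobolev tower of negative order}. The construction in \cite[\S~\Romannum{2}.5(a)]{engel1999one} makes $(\mathcal{G}^{(w)}_{-j+1})^{-1}$ an isometry of $(X^{(w)}_{-j+1}, \normcdotsub{-j})$ into $X^{(w)}_{-j+2}$, so its unique continuous extension gives $\mathcal{G}^{(w)}_{-j} : X^{(w)}_{-j+1} \to X^{(w)}_{-j}$ as an isometric isomorphism. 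Composing these isomorphisms from level $n+k$ down to level $n$ shows that $(\mathcal{G}^{(w)}_n)^k : X^{(w)}_{n+k} \to X^{(w)}_n$ is an isomorphism, and hence its domain is precisely $X^{(w)}_{n+k}$.

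Combining the two ingredients and setting $k = m - n \ge 0$ (the case $k=0$ being trivial, since $\mathscr{S}^{(w)}_n(t)$ acts on $X^{(w)}_n$) yields $\mathscr{S}^{(w)}_n(t)\mathring{u} \in \mathcal{D}((\mathcal{G}^{(w)}_n)^{m-n}) = X^{(w)}_m$ for all $t > 0$, as claimed. I expect the main obstacle to be the bookkeeping in the negative-index case: one has to verify that the successive unique continuous extensions of $\mathcal{G}^{(w)}_{-j+1}$ to $\mathcal{G}^{(w)}_{-j}$ compose consistently, so that iterating the isometric isomorphism $k$ times delivers exactly $X^{(w)}_{n+k}$ as the domain of $(\mathcal{G}^{(w)}_n)^k$, with no strict loss. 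Once this identification is secure, the conclusion is essentially immediate from the smoothing property of analytic semigroups, so the proof should be short.
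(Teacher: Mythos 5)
Your argument is correct, but it takes a genuinely different route from the paper's. You work entirely at level $n$: you invoke the full smoothing property of the analytic semigroup $(\mathscr{S}^{(w)}_n(t))_{t\ge0}$, namely $\mathscr{S}^{(w)}_n(t)\mathring{u}\in\mathcal{D}\bigl((\mathcal{G}^{(w)}_n)^k\bigr)$ for all $k\in\mathbb{N}$ and $t>0$, and then identify $\mathcal{D}\bigl((\mathcal{G}^{(w)}_n)^k\bigr)$ with $X^{(w)}_{n+k}$ using the fact that each $\mathcal{G}^{(w)}_j\colon X^{(w)}_{j+1}\to X^{(w)}_j$ is an isometric isomorphism, which is indeed built into the construction in \cite[\S\,\Romannum{2}.5(a)]{engel1999one}. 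The paper instead argues by induction on the level $m$: assuming $\mathscr{S}^{(w)}_n(t)\mathring{u}\in X^{(w)}_m$ for all $t>0$, it writes $\mathscr{S}^{(w)}_n(t)\mathring{u}=\mathscr{S}^{(w)}_m(t-t_0)\mathscr{S}^{(w)}_n(t_0)\mathring{u}$ for some $t_0\in(0,t)$ and uses only the one-step smoothing $\mathscr{S}^{(w)}_m(s)X^{(w)}_m\subseteq\mathcal{D}(\mathcal{G}^{(w)}_m)=X^{(w)}_{m+1}$, together with the coincidence of $\mathscr{S}^{(w)}_n(t)$ and $\mathscr{S}^{(w)}_m(t)$ on $X^{(w)}_m$. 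In effect you trade the semigroup factorisation and the level-by-level induction for the structural identity $\mathcal{D}\bigl((\mathcal{G}^{(w)}_n)^k\bigr)=X^{(w)}_{n+k}$; as you anticipate, the only real work in your version is verifying that identity for negative indices, and your description of the isometries is slightly misaligned as stated ($\mathcal{G}^{(w)}_{-j}$ maps $X^{(w)}_{-j+1}$ isometrically \emph{onto} $X^{(w)}_{-j}$, equivalently $(\mathcal{G}^{(w)}_{-j+1})^{-1}$ carries $(X^{(w)}_{-j+1},\normcdotsub{-j})$ onto $(X^{(w)}_{-j+2},\normcdotsub{-j+1})$), though this is easily tightened. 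The paper's induction avoids any explicit discussion of powers $(\mathcal{G}^{(w)}_n)^k$ with $k\ge2$, while your argument avoids the induction over levels and uses the analyticity of only one semigroup in the tower; both are short and rest on standard Sobolev-tower facts.
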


\begin{proof}
It is obvious that $\mathscr{S}^{(w)}_n(t)\mathring{u} \in X^{(w)}_n$
for all $t \geq 0$ and $\mathring{u} \in X^{(w)}_n$.
Also, if $\mathscr{S}^{(w)}_n(t)\mathring{u} \in X^{(w)}_m$ for some $m \ge n$
and all $t > 0$, then, on choosing $t_0 \in (0,t)$, we have
\[
  \mathscr{S}^{(w)}_n(t)\mathring{u}
  = \mathscr{S}^{(w)}_m(t-t_0)\mathscr{S}^{(w)}_n(t_0)\mathring{u}
  \in \mathcal{D}(\mathcal{G}^{(w)}_{m}) = X_{m+1}^{(w)},
\]
where we have used the fact that $\mathscr{S}^{(w)}_n(t)$ and $\mathscr{S}^{(w)}_m(t)$
coincide on $X_{m}^{(w)}$ together with the analyticity of $\mathscr{S}^{(w)}_m(t)$.
The result then follows by  induction.
\end{proof}

We can now prove the following result regarding the solvability
of \eqref{weighted frag ACP}.

\begin{theorem}\label{semigroup from tower solves weighted ACP}
Let Assumptions~\ref{A1.1} and \ref{assumption for analyticity} hold.
Further, let $n \in \mathbb{N}$.
Then the  ACP \eqref{weighted frag ACP} has a unique, non-negative
solution $u \in C^1((0,\infty), \ell_w^1) \cap C([0,\infty),X^{(w)}_{-n})$
for all $\mathring{u} \in (X^{(w)}_{-n})_+$.
This solution is given by $u(t)=S^{(w)}_{-n}(t)\mathring{u},\ t \ge 0$.
\end{theorem}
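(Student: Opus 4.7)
The plan is to show directly that $u(t) \coloneqq S^{(w)}_{-n}(t)\mathring{u}$ meets all the requirements of the statement. Continuity $u \in C([0,\infty), X^{(w)}_{-n})$ with $u(0) = \mathring{u}$ is immediate from the $C_0$-property of $(S^{(w)}_{-n}(t))_{t \ge 0}$ on $X^{(w)}_{-n}$. The decisive input is Lemma~\ref{semigroup is in all higher levels of tower}, which gives $u(t) \in X^{(w)}_m$ for every $m \ge -n$ and every $t > 0$; in particular, $u(t) \in X^{(w)}_0 = \ell_w^1$ and
\[
  u(t) \in X^{(w)}_1 = \mathcal{D}(\mathcal{G}^{(w)}) = \mathcal{D}(G^{(w)}) = \mathcal{D}(A^{(w)}+B^{(w)}) = \mathcal{D}(A^{(w)}),
\]
using \eqref{domAw_Bw_incl} together with the identification $G^{(w)} = A^{(w)}+B^{(w)}$ from Theorem~\ref{thm for analyticity} under Assumption~\ref{assumption for analyticity}.

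To promote abstract differentiability on $X^{(w)}_{-n}$ to the concrete derivative required by \eqref{weighted frag ACP}, I would fix an arbitrary $t_0 > 0$ and invoke the semigroup law: because $u(t_0) \in \ell_w^1$ and $S^{(w)}_{-n}(s)$ coincides with $S^{(w)}(s)$ on $\ell_w^1$, one has $u(t_0 + s) = S^{(w)}(s) u(t_0)$ in $\ell_w^1$ for all $s \ge 0$. Since $u(t_0) \in \mathcal{D}(G^{(w)})$, standard $C_0$-theory for $(S^{(w)}(t))_{t\ge 0}$ shows that $s \mapsto S^{(w)}(s) u(t_0)$ is $C^1$ into $\ell_w^1$ with derivative $G^{(w)} u(t_0+s) = A^{(w)}u(t_0+s) + B^{(w)}u(t_0+s)$, and letting $t_0$ range over $(0,\infty)$ delivers $u \in C^1((0,\infty), \ell_w^1)$ and the ACP. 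For uniqueness, if $v$ is another such solution, then $v'(t) = (A^{(w)}+B^{(w)})v(t) \in \ell_w^1$ forces $v(t) \in \mathcal{D}(A^{(w)}) \subseteq \mathcal{D}(\mathcal{G}^{(w)}_{-n})$ for $t > 0$, so the shifted functions $v_\varepsilon(t) \coloneqq v(t+\varepsilon)$ (with $\varepsilon > 0$) are genuine classical solutions on $X^{(w)}_{-n}$ of $v'_\varepsilon(t) = (\mathcal{G}^{(w)}_{-n}+\mu I)v_\varepsilon(t)$. The standard uniqueness theorem for the $C_0$-semigroup generated by $\mathcal{G}^{(w)}_{-n}+\mu I$ then yields $v(t+\varepsilon) = S^{(w)}_{-n}(t)v(\varepsilon)$; sending $\varepsilon \to 0^+$ and using the continuity of $v$ into $X^{(w)}_{-n}$ gives $v = u$.

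Non-negativity is handled by approximation: pick $\mathring{u}_k \in (\ell_w^1)_+$ with $\mathring{u}_k \to \mathring{u}$ in $X^{(w)}_{-n}$, available since $(X^{(w)}_{-n})_+$ is defined as the closure in $X^{(w)}_{-n}$ of the positive cone at level zero. Then $u_k(t) \coloneqq S^{(w)}(t)\mathring{u}_k$ is componentwise non-negative by positivity of the fragmentation semigroup on $\ell_w^1$, and for fixed $t > 0$ the smoothing built into the analytic tower makes $S^{(w)}_{-n}(t)\colon X^{(w)}_{-n} \to \ell_w^1$ bounded, because $\mathcal{G}^{(w)}_{-n}$ acts as an isometric isomorphism between consecutive levels of the tower. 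Hence $u_k(t) \to u(t)$ in $\ell_w^1$, and closedness of $(\ell_w^1)_+$ transfers non-negativity to the limit. I expect the main obstacle to be precisely this final step, which relies on the two facts that $(X^{(w)}_{-n})_+$ coincides with the closure of $(\ell_w^1)_+$ and that $S^{(w)}_{-n}(t)$ genuinely smooths from $X^{(w)}_{-n}$ into $\ell_w^1$; both follow from the Sobolev tower construction of Section~\ref{Sobolev towers} but need careful bookkeeping with the rescaling $\mu > \omega_0$.
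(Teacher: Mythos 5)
Your proposal is correct and follows essentially the same route as the paper's proof: define $u(t)=S^{(w)}_{-n}(t)\mathring{u}$, use Lemma~\ref{semigroup is in all higher levels of tower} to get $u(t)\in X^{(w)}_1=\mathcal{D}(A^{(w)})$ for $t>0$, transfer differentiability to $\ell_w^1$ via the semigroup law, and reduce uniqueness to the classical uniqueness theorem for the generator $\mathcal{G}^{(w)}_{-n}$ on $X^{(w)}_{-n}$. Your approximation argument for non-negativity is in fact a more explicit justification of the step the paper dispatches with the phrase ``follows from the substochasticity of the semigroups''.
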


\begin{proof}
Let $\mathring{u} \in (X^{(w)}_{-n})_+$ and
let $u(t)=S^{(w)}_{-n}(t)\mathring{u} = e^{\mu t}v(t)$, $t \ge 0$,
where $v(t) = \mathscr{S}^{(w)}_{-n}(t)\mathring{u}$.
Then, $v \in C^1((0,\infty), X^{(w)}_{-n})\cap C([0,\infty), X^{(w)}_{-n})$
is the unique classical solution of
\begin{equation}\label{G(-n) ACP}
  v'(t) = \mathcal{G}^{(w)}_{-n}v(t), \quad t>0; \qquad v(0) = \mathring{u}.
\end{equation}
Also, from Lemma~\ref{semigroup is in all higher levels of tower},
$\mathscr{S}^{(w)}_{-n}(t)\mathring{u} \in X_1^{(w)}=\mathcal{D}(\mathcal{G}^{(w)})$
for all $t > 0$.  Since $(\mathscr{S}^{(w)}_{-n}(t))_{t \geq 0}$ coincides
with $(\mathscr{S}^{(w)}(t))_{t \geq 0}$ on $\mathcal{D}(\mathcal{G}^{(w)})$,
it follows that
\[
  \mathscr{S}^{(w)}_{-n}(t)\mathring{u}
  = \mathscr{S}^{(w)}(t-t_0)\mathscr{S}^{(w)}_{-n}(t_0)\mathring{u},
  \qquad \text{where} \ t_0 \in (0,t).
\]
Consequently,
\[
  \frac{\rd}{\rd t}\bigl(\mathscr{S}^{(w)}_{-n}(t)\mathring{u}\bigr)
  = \mathcal{G}^{(w)}\mathscr{S}^{(w)}(t-t_0)\mathscr{S}^{(w)}_{-n}(t_0)\mathring{u}
  = \mathcal{G}^{(w)}\mathscr{S}^{(w)}_{-n}(t)\mathring{u},
  \qquad t>0,
\]
where the derivative is with respect to the norm on $X^{(w)}_{0} = \ell_w^1$.
This establishes that
$u \in C^1((0,\infty), \ell_w^1) \cap C([0,\infty),X^{(w)}_{-n})$ and
also that $u$ satisfies \eqref{weighted frag ACP}.
The non-negativity of $u$ follows from the substochasticity of the semigroups.

For uniqueness, we observe first that the construction of the Sobolev tower ensures
that $X_0^{(w)}$ is continuously embedded in $X_{-n}^{(w)}$.
Moreover, $\mathcal{G}^{(w)}$ is the restriction of $\mathcal{G}^{(w)}_{-n}$
to $X^{(w)}_1=\mathcal{D}(\mathcal{G}^{(w)})$.
Consequently,  if $u_1, u_2 \in C^1((0,\infty), \ell_w^1) \cap C([0,\infty),X^{(w)}_{-n})$
both satisfy \eqref{weighted frag ACP}, and we set $v_i(t) = e^{-\mu t}u_i(t)$, $i =1,2$,
then the difference $v_1-v_2$ is the unique classical solution
of \eqref{G(-n) ACP} with $\mathring{u} = 0$, and so $v_1= v_2$,
from which it follows that $u_1=u_2$.
\end{proof}

Finally, we make the following remark on the solvability of \eqref{ACP in X}.

\begin{remark}\label{remark about general IC in X with Sobolev tower}
For fixed $n \in \mathbb{N}$, the previous theorem establishes that
the ACP \eqref{weighted frag ACP} has a unique, non-negative solution
$u \in C^1((0,\infty), \ell_w^1) \cap C([0,\infty),X^{(w)}_{-n})$,
given by $u(t)=S^{(w)}_{-n}(t)\mathring{u}$,
for all $\mathring{u} \in (X^{(w)}_{-n})_+$, provided
that Assumptions~\ref{A1.1} and \ref{assumption for analyticity} are satisfied.
Recalling that we also assume that $w_n \ge n$ for all $n \in \mathbb{N}$,
we have that $\ell_w^1$ is continuously embedded in $X_{[1]}$,
and from this we deduce that if $u(t)$ is differentiable with respect
to the norm on $\ell_w^1$ then it is also differentiable with respect
to the norm on $X_{[1]}$, and the derivatives coincide.
Since $A_1+B_1$ is an extension of $G^{(w)}=A^{(w)}+B^{(w)}$,
we conclude that $u(t)=S^{(w)}_{-n}(t)\mathring{u}$ also satisfies \eqref{ACP in X}.
\end{remark}

\noindent
\textbf{Acknowledgements.} \\
L.~Kerr gratefully acknowledges the support of \textit{The Carnegie Trust
for the Universities of Scotland}.
All authors would like to thank the referees for their very helpful comments.

\noindent
Address (L.K., W.L., M.L.): \\
Department of Mathematics and Statistics \\
University of Strathclyde \\
26 Richmond Street \\
Glasgow G1 1XH \\
United Kingdom \\[1ex]
E-Mail: \\
\texttt{lyndsay.kerr@strath.ac.uk, w.lamb@strath.ac.uk, m.langer@strath.ac.uk}


\begin{thebibliography}{99}

\bibitem{aizenman1979convergence}
M.~Aizenman and T.\,A.~Bak,
Convergence to equilibrium in a system of reacting polymers.
\textit{Comm. Math. Phys.} \textbf{65} (1979), 203--230

\bibitem{arendtrhandi1991perturbation}
W.~Arendt and A.~Rhandi,
Perturbation of positive semigroups.
\textit{Arch. Math. (Basel)} \textbf{56} (1991), 107--119

\bibitem{arino1992}
O.~Arino,
Some spectral properties for the asymptotic behavior of semigroups
connected to population dynamics.
\textit{SIAM Rev.} \textbf{34} (1992), 445--476

\bibitem{arlotti1991}
L.~Arlotti,
A perturbation theorem for positive contraction semigroups on
{$L^1$}-spaces with applications to transport equations and Kolmogorov's
differential equations.
\textit{Acta Appl. Math.} \textbf{23} (1991), 129--144

\bibitem{banasiak2001extension}
J.~Banasiak,
On an extension of the {Kato}--{Voigt} perturbation theorem for
substochastic semigroups and its applications.
\textit{Taiwanese J. Math} \textbf{5} (2001), 169--191

\bibitem{banasiak2002unique}
J.~Banasiak,
On a non-uniqueness in fragmentation models.
\textit{Math. Methods Appl. Sci.} \textbf{7} (2002), 541--556

\bibitem{banasiak2011irregular}
J.~Banasiak,
On an irregular dynamics of certain fragmentation semigroups.
\textit{Rev. R. Acad. Cienc. Exactas F\'{i}s. Nat. Ser. A Math. RACSAM}
\textbf{105} (2011), 361--377

\bibitem{banasiak2012global}
J.~Banasiak,
Global classical solutions of coagulation--fragmentation
equations with unbounded coagulation rates.
\textit{Nonlinear Anal. Real World Appl.} \textbf{13} (2012), 91--105

\bibitem{banasiak2006perturbations}
J.~Banasiak and L.~Arlotti,
\textit{Perturbations of Positive Semigroups with Applications}.
Springer Monographs in Mathematics. Springer-Verlag London, Ltd., London (2006)

\bibitem{banasiakjoelshindin2019_onlinefirst}
J.~Banasiak, L.\,O.~Joel and S.~Shindin,
Discrete growth--decay--frag\-mentation equation: well-posedness and long-term dynamics.
\textit{J. Evol. Equ.} \textbf{19} (2019), 771--802

\bibitem{banasiaklamb2012analytic}
J.~Banasiak and W.~Lamb,
Analytic fragmentation semigroups and continuous coagulation--fragmentation equations
with unbounded rates.
\textit{J. Math. Anal. Appl.} \textbf{391} (2012), 312--322

\bibitem{banasiaklamb2012discrete}
J.~Banasiak and W.~Lamb,
The discrete fragmentation equation: semigroups,
compactness and asynchronous exponential growth.
\textit{Kinet. Relat. Models} \textbf{5} (2012), 223--236

\bibitem{batkai2017positive}
A.~B\'{a}tkai, M.~Kramar~Fijav\v{z} and A.~Rhandi,
\textit{Positive Operator Semigroups}.
Operator Theory: Advances and Applications (no.~257),
Birkh\"auser/Springer, Cham (2017)

\bibitem{caiedwardshan1991}
M.~Cai, B.\,F.~Edwards and H.~Han,
Exact and asymptotic scaling solutions for fragmentation with mass loss.
\textit{Phys. Rev. A} \textbf{43} (1991), 656--662

\bibitem{CadC94}
J.~Carr and F.\,P.~da~Costa,
Asymptotic behavior of solutions to the coagulation--fragmentation equations. II.
Weak fragmentation.
\textit{J. Statist. Phys.} \textbf{77} (1994), 89--123

\bibitem{drake1972aerosol}
R.\,I.~Drake,
A general mathematical survey of the coagulation equation.
In: G.\,M.~Hidy and J.\,R.~Brock (eds.)
\textit{Topics in Current Aerosol Research (Part 2)},
International Reviews in Aerosol Physics and Chemistry, no.~3, pp.~201--376.
Pergamom Press (1972)

\bibitem{engel1999one}
K.-J.~Engel and R.~Nagel,
\textit{One-Parameter Semigroups for Linear Evolution Equations}.
Graduate Texts in Mathematics, no.~194. Springer-Verlag, New York (2000)

\bibitem{mcbride2010strongly}
A.\,C.~McBride, A.\,L.~Smith and W.~Lamb,
Strongly differentiable solutions of the discrete coagulation--fragmentation equation.
\textit{Phys. D} \textbf{239} (2010), 1436--1445

\bibitem{smith2012discrete}
L.~Smith, W.~Lamb, M.~Langer and A.~McBride,
Discrete fragmentation with mass loss.
\textit{J. Evol. Equ.} \textbf{12} (2012), 191--201

\bibitem{thieme2006stochastic}
H.\,R.~Thieme and J.~Voigt,
Stochastic semigroups: their construction by perturbation and approximation.
In: \textit{Positivity {IV}---Theory and Applications}, pp.~135--146.
T.U. Dresden, Dresden (2006)

\bibitem{verdurmen2004simulation}
R.\,R.\,M.~Verdurmen, P.~Menn, J.~Ritzert, S.~Blei, G.\,C.\,S.~Nhumaio, S.\,T.~Sonne,
M.~Gunsing, J.~Straatsma, M.~Verschueren, M.~Sibeijn, G.~Schulte, U.~Fritsching,
K.~Bauckhage, C.~Tropea, M.~Sommerfeld, A.\,P.~Watkins, A.\,J.~Yule and H.~Sch{\o}nfeldt,
Simulation of agglomeration in spray drying installations: the EDECAD project.
\textit{Drying Technology} \textbf{22} (2004), 1403--1461

\bibitem{voigt1987onsubstochastic}
J.~Voigt,
On substochastic ${C}_0$ semigroups and their generators.
In: \textit{Proceedings of the Conference on Mathematical Methods Applied to
Kinetic Equations (Paris, 1985)},
Transport Theory Statist. Phys., no.~16, pp.~453--466 (1987)

\bibitem{wells2018thesis}
J.~Wells,
Modelling coagulation in industrial spray drying:
an efficient one-dimensional population balance approach.
Ph.D. thesis, University of Strathclyde,
Department of Mathematics and Statistics (2018)

\bibitem{ziff1980kinetics}
R.\,M.~Ziff,
Kinetics of polymerization.
\textit{J. Statist. Phys.} \textbf{23} (1980), 241--263

\bibitem{ziffmcgrady1985kinetics}
R.\,M.~Ziff and E.\,D.~McGrady,
The kinetics of cluster fragmentation and depolymerisation.
\textit{J. Phys. A} \textbf{18} (1985), 3027--3037

\end{thebibliography}
\end{document}